\definecolor{shadecolor}{RGB}{153,204,255}
\newtheorem{theorem}{Theorem}[section]
\newtheorem{definition}[theorem]{Definition}
\newtheorem{lemma}[theorem]{Lemma}
\newtheorem{proposition}[theorem]{Proposition}
\DeclareMathOperator*{\argmax}{arg\,max}
\DeclareMathOperator*{\argmin}{arg\,min}
\theoremstyle{remark}
\newtheorem*{remark}{Remark}
\newtheorem*{conjecture}{Conjecture}
\newtheorem{claimbbb}{Claim}
\newcommand{\rationals}{\ensuremath{{\mathbb{Q}}}}
\newcommand{\defhigh}[1]{\textsc{#1}}
\newcommand{\cupdisjoint}{\overset{\cdot}{\cup}}
\newcommand{\func}[1]{\gamma_{{#1}}} 
\newcommand{\funsymbc}{\gamma}
\newcommand{\fund}[1]{\delta_{{#1}}}
\newcommand{\funsymbd}{\delta}
\newcommand{\ee}[1]{e {{#1}}} 
\newcommand{\eesymb}{e}
\newcommand{\intuunique}[1]{\ensuremath{U_{#1}} }
\newcommand{\intiunique}[1]{\ensuremath{I_{#1}} }
\newcommand{\zindex}[1]{z({#1})}
\newcommand{\zindexfunsymb}{{z}}
\newcommand{\intrational}[2]{\ensuremath{R[{#1},{#2}]} }
\newcommand{\intinit}[1]{\ensuremath{I[{#1},{{#1}}]} }
\newcommand{\intindexone}[2]{\ensuremath{I[{#1},{#2}]} }
\newcommand{\intindextwo}[2]{\ensuremath{J[{#1},{#2}]} }
\newcommand{\intu}[2]{\ensuremath{U_{#1}^{#2}} }
\newcommand{\intv}[2]{\ensuremath{V_{#1}^{#2}} }
\newcommand{\rightendofu}[2]{\ensuremath{\boldsymbol{r}^{#2}({#1})} }
\newcommand{\miller}[1]{\ensuremath{M}(#1) }
\newcommand{\covertest}[2]{\ensuremath{{k_{#1}({#2})}} }
\newcommand{\coverset}[2]{\ensuremath{{\Tilde{k}_{#1}({#2})}} }
\newcommand{\coverall}[2]{\ensuremath{{K_{#1}({#2})}} }
\newcommand{\covertestfun}[1]{\ensuremath{{k_{#1}}} }
\newcommand{\coversetfun}[1]{\ensuremath{{\Tilde{k}_{#1}}} }
\newcommand{\coverallfun}[1]{\ensuremath{{K_{#1}}} }
\newcommand{\redsolovay}[1][\le]{\ensuremath{{#1}_{\mathrm{S}} } }
\newcommand{\redsolovayzweia}[1][\le]{\ensuremath{{#1}_{\mathrm{S}}^{\mathrm{2a}} } }
\begin{document}

\title{Extending the Limit Theorem of Barmpalias and Lewis-Pye to all reals}

\author{Ivan Titov}
\date{\vspace{-5ex}}

\maketitle
\begin{abstract}
By a celebrated result of Ku\v{c}era and Slaman~\cite{Kucera-Slaman-2001}, the Martin-Löf random left-c.e.\  reals form the highest left-c.e.\ Solovay degree. Barmpalias and Lewis-Pye~\cite{Barmpalias-Lewispye-2017} strengthened this result by showing that, for all left-c.e.\ reals $\alpha$ and $\beta$ such that~$\beta$ is Martin-Löf random and all left-c.e.\ approximations $a_0,a_1,\dots$ and $b_0,b_1,\dots$ of $\alpha$ and $\beta$, respectively, the limit
\begin{equation*}
    \lim\limits_{n\to\infty}\frac{\alpha - a_n}{\beta - b_n}
\end{equation*}
exists and does not depend on the choice of the left-c.e.\ approximations to~$\alpha$ and~$\beta$.

Here we give an equivalent formulation of the result of Barmpalias and Lewis-Pye in terms of nondecreasing translation functions and generalize their result to the set of all (i.e., not necessarily left-c.e.) reals.
\end{abstract}

\section{Introduction and background}

\subsection*{Preliminaries}

We assume the reader to be familiar with the basic concepts and results of algorithmic randomness. Our notation is standard. Unexplained notation can be found in Downey and Hirschfeldt~\cite{Downey-Hirschfeldt-2010}. As it is standard in the field, all rational and real numbers are meant to be in the unit interval $[0,1)$, unless stated otherwise.

\medskip
We start with reviewing some central concepts and results that will be used subsequently. The main object of interest of this article is Solovay reducibility, which has been introduced by Robert M.\ Solovay~\cite{Solovay-1975} in 1975 as a measure of relative randomness. Its original definition by Solovay uses the notion of translation function defined on the left cut of a real.

\begin{definition}
\begin{enumerate}
    \item
    A \defhigh{computable approximation} is a computable Cauchy sequence, i.e., a computable sequence of rational numbers that converges.
    A real is \defhigh{computably approximable}, or \defhigh{c.a.}, if it is the limit of some computable approximation.
    \item
    A \defhigh{left-c.e.\ approximation} is a nondecreasing computable approximation.
    A real is \defhigh{left-c.e.}\ if it is the limit of some left-c.e.\ approximation.
\end{enumerate}
\end{definition}

\begin{definition}
    The \defhigh{left cut} of a real $\alpha$, written~$LC(\alpha)$, is the set of all rationals strictly smaller than $\alpha$.
\end{definition}

\begin{definition}[Solovay, 1975]\label{Solovay-reducibility}
    A \defhigh{translation function} from a real~$\beta$ to a real~$\alpha$ is a partially computable function $g$ from the set~$\rationals \cap [0,1)$ to itself such that, for all~$q<\beta$, the value $g(q)$ is defined and fulfills~$g(q)<\alpha$, and
    \begin{equation}\label{eq:translation-function}
        \lim\limits_{q\nearrow\beta} g(q) = \alpha,
    \end{equation}
    where $\lim\limits_{q\nearrow\beta}$ denotes the left limit.
    
    A real~$\alpha$ is \defhigh{Solovay reducible} to a real~$\beta$, also written as~$\alpha\redsolovay\beta$, if there is a real constant~$c$ and a translation function $g$ from~$\beta$ to~$\alpha$ such that, for all~$q< \beta$, it holds that
    \begin{equation}\label{eq:Solovay-reducibility}
        0<\alpha - g(q)<c(\beta - q).
    \end{equation}
\end{definition}
We will refer to~\eqref{eq:Solovay-reducibility} as \defhigh{Solovay condition} and to~$c$ as \defhigh{Solovay constant}, and we say that $g$ \defhigh{witnesses} the Solovay reducibility of $\alpha$ to $\beta$. 

Note that if a partially computable rational-valued function $g$ is defined on all of the set~$LC(\beta)$ and maps it to $LC(\alpha)$, then the Solovay condition~\eqref{eq:Solovay-reducibility} implies~\eqref{eq:translation-function}.

\medskip

Noting that the translation function $g$ defined above provides any useful information only about the left cuts of $\alpha$ and $\beta$, many researchers focused on Solovay reducibility as a measure of relative randomness of left-c.e.\ reals, whereas, outside of the left-c.e.\ reals, the notion has been considered  as \say{badly behaved} by several authors (see e.g. Downey and Hirschfeldt~\cite[Section~9.1]{Downey-Hirschfeldt-2010}).

Calude, Hertling, Khoussainov, and Wang~\cite{Calude-etal-2001} gave an equivalent characterization of Solovay reducibility on the set of the left-c.e.\ reals in terms of left-c.e.\ approximations of the involved reals.

\begin{proposition}[Calude et al., 1998]\label{Solovay-reducibility:index}
    A left-c.e.\ real $\alpha$ is Solovay reducible to a left-c.e.\ real $\beta$ with a Solovay constant $c$ if and only if, for every left-c.e.\ approximations $a_0,a_1,\dots\nearrow\alpha$ and $b_0,b_1,\dots\nearrow\beta$, there exists a computable index function $f:\mathbb{N}\to\mathbb{N}$ such that, for every $n$, it holds that
        \begin{equation}\label{eq:Solovay-reducibility-index-function}
            \alpha - a_{f(n)}<c(\beta - b_n).
        \end{equation}
\end{proposition}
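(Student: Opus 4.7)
The plan is to prove both directions of the equivalence by establishing a direct correspondence between translation functions from $\beta$ to $\alpha$ and index functions with respect to fixed left-c.e.\ approximations.

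For the forward direction, suppose $g$ witnesses $\alpha \redsolovay \beta$ with Solovay constant $c$, and fix arbitrary left-c.e.\ approximations $a_n \nearrow \alpha$ and $b_n \nearrow \beta$. For each $n$, since $b_n < \beta$, the value $g(b_n)$ is defined, lies strictly below $\alpha$, and satisfies $\alpha - g(b_n) < c(\beta - b_n)$ by the Solovay condition. Because $a_m \to \alpha > g(b_n)$, the least $m$ with $a_m > g(b_n)$ can be found by a straightforward dovetailing search; set $f(n) = m$. Then $\alpha - a_{f(n)} < \alpha - g(b_n) < c(\beta - b_n)$, which is exactly the required inequality, and $f$ is computable uniformly in the computable sequences $a_n$, $b_n$ and the partial computable $g$.

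For the backward direction, fix some pair of left-c.e.\ approximations $a_n \nearrow \alpha$ and $b_n \nearrow \beta$ and the index function $f$ supplied by the hypothesis. Define a partial computable $g \colon \rationals \cap [0,1) \to \rationals \cap [0,1)$ by: on input $q$, search for the least $n$ with $b_n > q$, and output $a_{f(n)}$. This search halts precisely when $q < \beta$, so $g$ is defined on the left cut $LC(\beta)$. For such $q$, the witnessing $n$ satisfies $\beta - b_n < \beta - q$, whence $\alpha - g(q) = \alpha - a_{f(n)} < c(\beta - b_n) < c(\beta - q)$, yielding the Solovay condition.

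The main obstacle will be the edge cases when $\alpha$ or $\beta$ is rational. A left-c.e.\ approximation $b_n \nearrow \beta$ might satisfy $b_n = \beta$ from some index on, making $\alpha - a_{f(n)} < c(\beta - b_n) = 0$ unsatisfiable in the forward direction, and similarly the equality $a_{f(n)} = \alpha$ would violate the strict positivity required for the Solovay condition in the backward direction. Both issues can be resolved by adopting the standard convention that left-c.e.\ approximations are strictly increasing with $a_n < \alpha$ and $b_n < \beta$ throughout, which is harmless because rationals are computable and Solovay reducibility from or to a computable real reduces to a trivial construction; once these conventions are fixed, the straightforward correspondence above is the entire content of the proof.
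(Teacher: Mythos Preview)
The paper does not include a proof of this proposition; it is stated as a background result attributed to Calude, Hertling, Khoussainov, and Wang~(1998) and used without further argument. Your proof is the standard one and is correct: in each direction you construct the required witness by the obvious effective search, and the Solovay inequality transfers directly via the intermediate value $g(b_n)$ (forward) or $a_{f(n)}$ with $b_n>q$ (backward). Your identification and handling of the rational edge cases via the strict-inequality convention on approximations is the usual way to dispose of them and is adequate here, particularly since every subsequent application in the paper takes $\beta$ to be Martin-L\"of random, hence irrational.
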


Informally speaking, the reduction $\alpha\redsolovay\beta$ provides for every left-c.e.\ approximation of $\beta$ a not slower left-c.e.\ approximation of $\alpha$. It is easy to see that the universal quantification over left-c.e.\ approximations to~$\alpha$ in Proposition~\ref{Solovay-reducibility:index} can be replaced by an existential quantification as follows.

\begin{proposition}\label{Solovay-reducibility:index:exist}
        A left-c.e.\ real $\alpha$ is Solovay reducible to a left-c.e.\  real  $\beta$ with a Solovay constant $c$ if and only if  there exist left-c.e.\ approximations $a_0,a_1,\dots\nearrow\alpha$ and $b_0,b_1,\dots\nearrow\beta$ such that, for every $n$, it holds that
        \begin{equation}\label{eq:Solovay-reducibility-index}
            \alpha - a_n<c(\beta - b_n).
        \end{equation}
\end{proposition}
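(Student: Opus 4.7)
The proposition is an easy consequence of Proposition~\ref{Solovay-reducibility:index}, so the plan is to derive both implications from it (or directly from the definition of Solovay reducibility).

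For the forward implication, I would assume $\alpha \redsolovay \beta$ with Solovay constant $c$ and fix any left-c.e.\ approximation $(b_n)$ of $\beta$ and any left-c.e.\ approximation $(a'_n)$ of $\alpha$. By Proposition~\ref{Solovay-reducibility:index}, there exists a computable $f:\mathbb{N}\to\mathbb{N}$ such that $\alpha - a'_{f(n)} < c(\beta - b_n)$ for every~$n$. The sequence $(a'_{f(n)})$ need not be nondecreasing, so I would pass to its running maximum
\[
    a_n \;=\; \max_{k\le n} a'_{f(k)}.
\]
This sequence is computable and nondecreasing, each term is bounded above by $\alpha$, and $\alpha - a_n \le \alpha - a'_{f(n)} < c(\beta - b_n) \to 0$, so $(a_n)$ is a left-c.e.\ approximation of $\alpha$. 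By construction it satisfies~\eqref{eq:Solovay-reducibility-index} together with $(b_n)$.

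For the backward implication, I would construct a translation function $g$ directly from the given approximations $(a_n)$ and $(b_n)$. On rational input $q$, the algorithm for $g$ computes $b_0,b_1,\dots$ until the first index $n = n(q)$ with $b_n > q$ is found and outputs $a_n$. If $q < \beta$, such an $n$ exists because $b_n \nearrow \beta$, so $g$ is defined on all of $LC(\beta) \cap \rationals$ and maps it into $LC(\alpha)$. For the Solovay condition, we have
\[
    0 \;<\; \alpha - g(q) \;=\; \alpha - a_{n(q)} \;<\; c(\beta - b_{n(q)}) \;<\; c(\beta - q),
\]
and by the remark following Definition~\ref{Solovay-reducibility} this automatically forces $\lim_{q\nearrow\beta} g(q) = \alpha$, so $g$ witnesses $\alpha \redsolovay \beta$ with constant $c$.

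There is no real obstacle here; the only subtle point is the running-maximum step in the forward direction, which is needed because Proposition~\ref{Solovay-reducibility:index} only yields an index function rather than a full reparametrised approximation. Both implications can be written out in a few lines.
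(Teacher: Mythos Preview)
Your proposal is correct and matches what the paper intends: the paper does not give an explicit proof of this proposition, only the remark that it follows easily by replacing the universal quantifier in Proposition~\ref{Solovay-reducibility:index} by an existential one, and your derivation via Proposition~\ref{Solovay-reducibility:index} (running maximum for the forward direction, direct translation function for the backward direction) is precisely such an argument. The backward construction you give is essentially the same as the one the paper later uses in the proof of Proposition~\ref{monotonizable}.
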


In what follows, we refer to the characterizations of Solovay reducibility given in Definition~\ref{Solovay-reducibility} and in Propositions~\ref{Solovay-reducibility:index} and~\ref{Solovay-reducibility:index:exist} as \defhigh{rational} and \defhigh{index} approaches, respectively.

\begin{remark}
For example, Zheng and Rettinger~\cite{Zheng-Rettinger-2004} used the index approach to  introduce {S2a}-reducibility on the c.a.\ reals, a variant of Solovay reducibility, which is equivalent to Solovay reducibility on the left-c.e.\ reals~\cite[Theorem~3.2(2)]{Zheng-Rettinger-2004} but is strictly weaker than Solovay reducibility on the c.a.\ reals~\cite[Theorem~2.1]{Titov-2024-next}.
Some authors~\cite{Hoyrup-etal-2018, Miller-2017, Rettinger-Zheng-2021} use {S2a}-reducibility and not Solovay reducibility as a standard reducibility for investigating the c.a.\ reals.
\end{remark}

\subsection*{The Limit Theorem of Barmpalias and Lewis-Pye on left-c.e.\ reals: two versions}

Using the index approach of Calude et al., Ku\v{c}era and Slaman~\cite{Kucera-Slaman-2001} have proven
that the $\Omega$-like reals, i.e., Martin-Löf random left-c.e.\ reals, form the highest Solovay degree on the set of left-c.e.\ reals.
The core of their proof is the following assertion.

\begin{lemma}[Ku\v{c}era and Slaman, 2001; explicitly: Miller, 2017]\label{KS-LEFT-CE}
For every left-c.e.\ approximations $a_0,a_1,\dots$ and $b_0,b_1,\dots$ of a left-c.e\ real $\alpha$ and a Martin-Löf random left-c.e.\ real $\beta$, respectively,
there exists a constant $c$ such that 
\begin{equation}
    \forall n\in\mathbb{N} \big( \frac{\alpha - a_n}{\beta - b_n} < c \big).
\end{equation}
\end{lemma}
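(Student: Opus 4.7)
The plan is to proceed by contradiction. Suppose no constant $c$ works for the given approximations. Then for every $k \in \mathbb{N}$ there is an index $n$ with $\alpha - a_n > 2^k(\beta - b_n)$, equivalently $\beta \in I_k^n := (b_n,\, b_n + 2^{-k}(\alpha - a_n))$. Since $\alpha$ is left-c.e., each such $I_k^n$ is uniformly c.e.\ open, presented as the increasing union of the rational intervals $(b_n,\, b_n + 2^{-k}(a_s - a_n))$ as $s \to \infty$.

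From these intervals I would assemble a Martin-Löf test $\{U_k\}_{k\ge 1}$ of c.e.\ open sets satisfying $\mu(U_k) = O(2^{-k})$ and $\beta \in U_k$ for every $k$, contradicting the Martin-Löf randomness of $\beta$. The naive choice $U_k := \bigcup_n I_k^n$ covers $\beta$ by the contradiction assumption, but the crude measure bound $\sum_n 2^{-k}(\alpha - a_n)$ may diverge. The standard fix is a greedy enumeration: set $n_0 := 0$ and, inductively, let $n_{i+1}$ be the least $m > n_i$ with $a_m - a_{n_i} \geq 2^k(b_m - b_{n_i})$, enumerating the (slightly enlarged) interval $(b_{n_i},\, b_{n_{i+1}} + 2^{-k-i})$ into $U_k$. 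The telescoping estimate
\[
    \sum_i (b_{n_{i+1}} - b_{n_i}) \;\leq\; 2^{-k} \sum_i (a_{n_{i+1}} - a_{n_i}) \;\leq\; 2^{-k}(\alpha - a_0),
\]
together with $\sum_i 2^{-k-i} = 2^{-k+1}$, yields $\mu(U_k) = O(2^{-k})$.

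The principal obstacle will be to verify that $\beta \in U_k$ under the contradiction assumption, which requires a case analysis on whether the greedy process terminates. If it runs forever, then $b_{n_i} \nearrow \beta$ and the summable extensions $2^{-k-i}$ eventually place $\beta$ in the interior of some enumerated interval. If the process terminates at some $n^*$, then letting $m \to \infty$ in the failing inequality gives $\alpha - a_{n^*} \leq 2^k(\beta - b_{n^*})$, so one must argue that any residual failure of the Solovay bound (for indices $n > n^*$) still forces $\beta$ into $U_k$, for instance by supplementing $U_k$ with the single interval $I_k^n$ for the first such $n$ and absorbing the extra contribution into the measure estimate (e.g., by performing the greedy construction at threshold $2^{k+1}$ rather than $2^k$, which strengthens the contradiction assumption used while only doubling the measure bound). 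Once $\{U_k\}$ is correctly assembled, Martin-Löf randomness of $\beta$ is contradicted and the desired constant $c$ must exist.
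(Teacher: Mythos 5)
Your overall plan (convert failure of every bound $2^k$ into a test capturing $\beta$, with a telescoping measure estimate) is the right strategy, and it is the strategy behind the proof the paper points to (Miller's Lemma~1.1, generalized in Section~\ref{proof:bounded}). But there is a genuine gap exactly at the point you call the principal obstacle, and none of your suggested patches closes it. If the greedy search halts at $n^\ast$, then, as you observe, $\alpha-a_{n^\ast}\le 2^k(\beta-b_{n^\ast})$, so an interval anchored at the terminal index can never contain $\beta$; meanwhile the witness $n$ with $\alpha-a_n>2^k(\beta-b_n)$ can be \emph{any} index, not just $n>n^\ast$: it can be a skipped index, or an anchor $n_i$ whose interval you closed off at $b_{n_{i+1}}+2^{-k-i}$, far below $b_{n_i}+2^{-k}(\alpha-a_{n_i})$. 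Concretely, take $2^k=4$, $a_0=0$, $a_1=1/10$, $\alpha=1/2$, and rationals $b_0,b_1<\beta$ with $\beta-b_0\approx 1/2$ and $\beta-b_1\approx 1/100$: the search from anchor $0$ never succeeds (it would need $a_m-a_0\ge 4(b_m-b_0)\ge 1.9$), so no padded interval is ever enumerated and the terminal interval $(b_0,\,b_0+2^{-k}(\alpha-a_0))$ misses $\beta$, yet $n=1$ is a witness since $\alpha-a_1=2/5>4(\beta-b_1)$. So the implication you rely on, ``witness at level $2^k$ implies $\beta\in U_k$,'' is false. Moreover, ``the first such $n$'' is defined in terms of $\alpha$ and $\beta$ and is not computable, so enumerating just that one interval $I_k^n$ is not a legal move in building a c.e.\ test (enumerating $I_k^n$ for all $n$ reinstates the divergent measure you set out to avoid), and running the greedy search at threshold $2^{k+1}$ changes nothing structural. (Your non-terminating case happens to be fine, but not for the reason you give: if the search never halts, telescoping yields $\beta-b_0\le 2^{-k}(\alpha-a_0)<2^{-k}$, so already the \emph{first} padded interval contains $\beta$.)

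The missing idea is the one in the construction the paper cites and adapts in Section~\ref{proof:bounded}: enumerate an interval for \emph{every} pair $s<t$ with $a_t-a_s\ge 2^{k}(b_t-b_s)$, namely $[\,b_t,\;b_s+2^{-k}(a_t-a_s)\,]$ --- right end anchored at the earlier stage $s$, but cut off on the left at the later value $b_t$. Then capture is immediate: if $\alpha-a_s>2^{k}(\beta-b_s)$ for some $s$, then for all sufficiently large $t$ the pair $(s,t)$ qualifies and its interval contains $\beta$, wherever the witness $s$ sits. The price is that the sum of the lengths of these intervals diverges, so one must bound the measure of their \emph{union} by roughly $2^{-k}(\alpha-a_0)$; that is the genuinely nontrivial part, carried out in the paper via Claim~\ref{claim:bounded} and the index-stair analysis (Claims~\ref{only-neighbor-matters}--\ref{non-covered-point}), and in Miller's original proof for the left-c.e.\ case. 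Your greedy scheme makes the measure estimate trivial precisely by discarding the feature that makes capture work, and that trade-off is where the proof breaks.
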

For an explicit proof of the latter lemma, see Miller~\cite[Lemma 1.1]{Miller-2017}.
Barmpalias and Lewis-Pye~\cite{Barmpalias-Lewispye-2017} have strengthened Lemma~\ref{KS-LEFT-CE} by showing the following theorem.

\begin{theorem}[Barmpalias, Lewis-Pye, 2017]
\label{BLP-LEFT-CE-index-form}
    For every left-c.e.\ real $\alpha$ and every Martin-Löf random left-c.e.\ real $\beta$, there exists a constant $d\geq 0$ such that, for every left-c.e.\ approximations $a_0,a_1,\dots\nearrow\alpha$ and $b_0,b_1,\dots\nearrow\beta$, it holds that
    \begin{equation}\label{eq:BLP-LEFT-CE-index-form}
        \lim\limits_{n\to\infty}\frac{\alpha - a_n}{\beta - b_n} = d.
    \end{equation}
    Moreover, $d=0$ if and only if $\alpha$ is not Martin-Löf random.
\end{theorem}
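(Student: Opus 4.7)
The plan is to use Lemma~\ref{KS-LEFT-CE} to bound $r_n := (\alpha - a_n)/(\beta - b_n)$ from above, take $d := \limsup_n r_n$ as the candidate limit, and then invoke the Martin-Löf randomness of $\beta$ via a Solovay test to force $\liminf_n r_n = d$. Independence from the approximations and the ``moreover'' clause will be handled with analogous arguments. The first step is immediate from Lemma~\ref{KS-LEFT-CE}: for any fixed left-c.e.\ approximations $a_n \nearrow \alpha$ and $b_n \nearrow \beta$, one has $r_n \le c$ for some constant $c$, so $d \in [0,c]$ is well defined.

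The main step is to show $\liminf_n r_n = d$. Suppose for contradiction that $\liminf_n r_n < p < q < d$ for some rationals $p, q$. Since the ratio crosses the band $(p,q)$ infinitely often, one can extract a sequence of pairs $n_1 < n_1' < n_2 < n_2' < \dots$ with $r_{n_k} < p$ and $r_{n_k'} > q$. Combining the two inequalities $\alpha - a_{n_k} < p(\beta - b_{n_k})$ and $\alpha - a_{n_k'} > q(\beta - b_{n_k'})$ with $a_{n_k'} \ge a_{n_k}$ yields, after rearrangement,
\[
    \beta - b_{n_k} < \delta_k := \frac{q(b_{n_k'} - b_{n_k}) - (a_{n_k'} - a_{n_k})}{q-p},
\]
so the rational open interval $U_k := (b_{n_k}, b_{n_k} + \delta_k)$ contains $\beta$. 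The subintervals $(b_{n_k}, b_{n_k'})$ are pairwise disjoint inside $[0, \beta]$, whence $\sum_k \delta_k \le q\beta/(q-p) < \infty$. Effectively enumerating the $U_k$ using computable rational approximations to $\alpha$ and $\beta$ --- possibly with harmless spurious entries that do not affect the measure bound --- produces a Solovay test with $\beta$ in every member, contradicting the Martin-Löf randomness of $\beta$.

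To see that $d$ does not depend on the approximations, given a second pair $a_n' \nearrow \alpha$, $b_n' \nearrow \beta$ with corresponding limit $d' \ne d$, the same style of construction --- pairing indices where the two ratios straddle a rational between $d$ and $d'$ --- produces a Solovay test covering $\beta$ infinitely often; alternatively, the componentwise maxima $\tilde a_n := \max(a_n, a_n')$ and $\tilde b_n := \max(b_n, b_n')$ yield a single approximation whose limit must equal both $d$ and $d'$. For the ``moreover'' part, if $\alpha$ is Martin-Löf random then Lemma~\ref{KS-LEFT-CE} applied with the roles of $\alpha$ and $\beta$ swapped gives $r_n \ge 1/C > 0$ and hence $d > 0$; conversely, if $d > 0$ then eventually $r_n > d/2$, giving $\beta - b_n < (2/d)(\alpha - a_n)$ for all sufficiently large $n$, which witnesses $\beta \redsolovay \alpha$ by Proposition~\ref{Solovay-reducibility:index:exist}, so Martin-Löf randomness transfers upward from $\beta$ to $\alpha$.

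The main obstacle will be the effectivity of the Solovay test in the second step. The conditions $r_n < p$ and $r_{n'} > q$ involve the unknown reals $\alpha$ and $\beta$ and are not directly c.e.; verifying them effectively from rational approximations --- while preserving both the pairwise disjointness needed for the total-measure bound and the guaranteed coverage of $\beta$ needed for the contradiction --- is the delicate part of the argument.
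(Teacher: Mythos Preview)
The paper does not give a self-contained proof of Theorem~\ref{BLP-LEFT-CE-index-form}; it is cited as a result of Barmpalias and Lewis-Pye. The paper's own route to the limit statement goes via Proposition~\ref{rational-index-equivalence} (equivalence with the rational form) and then the main Theorem~\ref{theorem:BLP-generalized}, whose proof uses nondecreasing translation functions and the elaborate $M(Q)$/index-stair construction of Section~\ref{proof:exists}. The ``moreover'' clause is not reproved in the paper at all.

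Your sketch stays in the index framework and is essentially the original Barmpalias--Lewis-Pye/Miller argument. The derivation of $\delta_k$, the disjointness-based measure bound $\sum_k\delta_k\le q\beta/(q-p)$, the independence argument via componentwise maxima, and the handling of the ``moreover'' clause are all correct. The genuine gap is exactly what you flag in your last paragraph: the pairs $(n_k,n_k')$ are selected by the non-c.e.\ conditions $r_{n_k}<p$ and $r_{n_k'}>q$, so the $U_k$ cannot be enumerated as written. Your phrase ``possibly with harmless spurious entries that do not affect the measure bound'' is where the real work hides. The standard resolution replaces these conditions by computable ones on the sequences $a_n - p\,b_n$ and $a_n - q\,b_n$ (the index analogues of the paper's $\funsymbc,\funsymbd$) and enumerates intervals for \emph{all} index pairs satisfying a purely rational inequality; but then the disjointness of the $(b_{n_k},b_{n_k'})$ is lost, and controlling the total measure of this over-enumeration while still guaranteeing that $\beta$ is covered infinitely often is precisely what forces a more careful construction---this is the content of Miller's Lemma~1.2 and of Claims~\ref{claim:upper-bound-for-single-stage} and~\ref{general-bound-for-coverall} in the paper. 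So your outline is sound and close in spirit to the original proof, but the last paragraph is not a technicality: it is the substantive part of the argument that remains to be written.
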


We refer to Theorem~\ref{BLP-LEFT-CE-index-form} as \defhigh{index form of the Limit Theorem of Barmpalias and Lewis-Pye}. We will argue in connection with Proposition~\ref{rational-index-equivalence} below that the index form of the Limit Theorem, which is essentially the original formulation, can be equivalently stated, with the value of~$d$ preserved, in the following rational form. The rational form, however, necessitates the use of nondecreasing translation functions.

\begin{theorem}[Rational form of the Limit Theorem of Barmpalias and Lewis-Pye]
\label{BLP-LEFT-CE-rational-form}
    For every left-c.e.\ real $\alpha$ and every Martin-Löf random left-c.e.\ real $\beta$, there exists a constant $d\geq 0$ such that, for every nondecreasing translation function $g$ from $\beta$ to $\alpha$, it holds that
    \begin{equation}\label{eq:BLP-LEFT-CE-rational-form}
        \lim\limits_{q\nearrow\beta}\frac{\alpha - g(q)}{\beta - q} = d.
    \end{equation}
    Moreover, $d=0$ if and only if $\alpha$ is not Martin-Löf random.
\end{theorem}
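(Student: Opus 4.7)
The plan is to reduce Theorem~\ref{BLP-LEFT-CE-rational-form} to the index form, Theorem~\ref{BLP-LEFT-CE-index-form}, via the natural correspondence between nondecreasing translation functions and left-c.e.\ approximations that Proposition~\ref{rational-index-equivalence} provides. Given a nondecreasing translation function $g$ from $\beta$ to $\alpha$ and any left-c.e.\ approximation $b_0, b_1, \dots \nearrow \beta$, I set $a_n := g(b_n)$. Then $(a_n)$ is a left-c.e.\ approximation of $\alpha$: it is computable in $n$, nondecreasing by monotonicity of $g$, and converges to $\alpha$ by~\eqref{eq:translation-function} since $b_n \nearrow \beta$. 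Applying Theorem~\ref{BLP-LEFT-CE-index-form} to $(a_n)$ and $(b_n)$ therefore yields a constant $d$ (independent of the choice of $(b_n)$) with
\[
\lim_{n\to\infty}\frac{\alpha-g(b_n)}{\beta-b_n}\;=\;d,
\]
and $d=0$ iff $\alpha$ is not Martin-L\"of random.

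The main work is to upgrade this convergence along a single computable sequence to the full left limit~\eqref{eq:BLP-LEFT-CE-rational-form}. Here the monotonicity of $g$ is essential: for any rational $q$ with $b_n \le q < b_{n+1}$ we have $g(b_n) \le g(q) \le g(b_{n+1})$ and $\beta - b_{n+1} < \beta - q \le \beta - b_n$, which together yield the sandwich
\[
\frac{\alpha-g(b_{n+1})}{\beta-b_{n+1}}\cdot\frac{\beta-b_{n+1}}{\beta-b_n}\;\le\;\frac{\alpha-g(q)}{\beta-q}\;\le\;\frac{\alpha-g(b_n)}{\beta-b_n}\cdot\frac{\beta-b_n}{\beta-b_{n+1}}.
\]
If the left-c.e.\ approximation $(b_n)$ is chosen so that the consecutive gap ratios $\frac{\beta-b_n}{\beta-b_{n+1}}$ tend to $1$, then both outer factors converge to $d$ by the previous paragraph, which forces $\frac{\alpha-g(q)}{\beta-q}\to d$ as $q\nearrow\beta$.

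I expect the chief obstacle to be the effective construction of such a ``slowly increasing'' left-c.e.\ approximation $(b_n)$. Because $\beta - b_n$ is not itself computable, one cannot prescribe the gaps directly; instead, starting from any fixed left-c.e.\ approximation of $\beta$, one has to interpose enough intermediate rationals between consecutive terms so that each increment $b_{n+1}-b_n$ is only a vanishing fraction of a computable lower bound on the remaining distance to $\beta$, while simultaneously preserving strict monotonicity and convergence to $\beta$. Once this auxiliary construction is in place, the moreover clause of Theorem~\ref{BLP-LEFT-CE-rational-form} transfers verbatim from that of Theorem~\ref{BLP-LEFT-CE-index-form}, since the correspondence $g\leftrightarrow(g(b_n))$ preserves the constant $d$.
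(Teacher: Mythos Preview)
Your overall strategy---deriving Theorem~\ref{BLP-LEFT-CE-rational-form} from Theorem~\ref{BLP-LEFT-CE-index-form} via the substitution $a_n=g(b_n)$ and a sandwich argument using the monotonicity of $g$---is exactly the route the paper takes in the proof of Proposition~\ref{rational-index-equivalence}. Your sandwich is also the same one: for $b_n\le q<b_{n+1}$,
\[
\frac{\alpha-a_{n+1}}{\beta-b_n}\;\le\;\frac{\alpha-g(q)}{\beta-q}\;<\;\frac{\alpha-a_n}{\beta-b_{n+1}}.
\]

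Where you diverge is in how to force the two outer quantities to $d$. You factor them as $\frac{\alpha-a_{n+1}}{\beta-b_{n+1}}\cdot\frac{\beta-b_{n+1}}{\beta-b_n}$ and $\frac{\alpha-a_n}{\beta-b_n}\cdot\frac{\beta-b_n}{\beta-b_{n+1}}$, and then propose to build a special left-c.e.\ approximation of $\beta$ with $\frac{\beta-b_n}{\beta-b_{n+1}}\to 1$. That construction can indeed be carried out (e.g.\ from any strictly increasing $c_0<c_1<\cdots\nearrow\beta$, set $b_0=c_0$ and $b_{n+1}=b_n+\tfrac{1}{n+2}(c_{s}-b_n)$ for the first $s$ with $c_s>b_n$), but it is an unnecessary detour and your phrase ``a computable lower bound on the remaining distance to $\beta$'' is misleading, since no such uniform bound exists; only the ad hoc bound $c_s-b_n$ is available.

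The paper avoids this entirely by a one-line observation you are missing: the sequences $(a_1,a_2,\dots)$ paired with $(b_0,b_1,\dots)$, and $(a_0,a_1,\dots)$ paired with $(b_1,b_2,\dots)$, are themselves pairs of left-c.e.\ approximations of $\alpha$ and $\beta$. Hence Theorem~\ref{BLP-LEFT-CE-index-form} applies \emph{directly} to each mismatched pair, giving $\lim_n\frac{\alpha-a_{n+1}}{\beta-b_n}=d$ and $\lim_n\frac{\alpha-a_n}{\beta-b_{n+1}}=d$ without any condition on the gap ratios. This makes the ``chief obstacle'' you anticipate disappear; any strictly increasing left-c.e.\ approximation $(b_n)$ will do.
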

The following Proposition~\ref{rational-index-equivalence} and its proof indicate that the Theorems~\ref{BLP-LEFT-CE-rational-form} and~\ref{BLP-LEFT-CE-index-form} can be considered as variants of each other.
\begin{proposition}\label{rational-index-equivalence}
Theorems~\ref{BLP-LEFT-CE-index-form} and~\ref{BLP-LEFT-CE-rational-form} are equivalent while preserving the value of~$d$.
\end{proposition}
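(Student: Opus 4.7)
The plan is to establish both implications separately, each preserving the value of $d$. Both directions work by converting between a nondecreasing translation function and a pair of left-c.e.\ approximations.

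For the direction from the rational form to the index form, I would start with arbitrary left-c.e.\ approximations $a_0\leq a_1\leq\dots\nearrow\alpha$ and $b_0\leq b_1\leq\dots\nearrow\beta$, which may be assumed strictly increasing by discarding repeats, and define a partial computable $g\colon \rationals\cap[0,1)\to\rationals\cap[0,1)$ by $g(q)=a_{n(q)}$ where $n(q)=\min\{k:b_k\geq q\}$. Routine verification shows that $g$ is a nondecreasing translation function from $\beta$ to $\alpha$: $g(q)$ is defined for every $q<\beta$ because $b_k\nearrow\beta$, we have $g(q)<\alpha$ since the $a_n$ lie below $\alpha$, $g$ is nondecreasing in $q$, and $\lim_{q\nearrow\beta}g(q)=\alpha$ since $n(q)\to\infty$. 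Applying the rational form to this $g$ yields $\lim_{q\nearrow\beta}(\alpha-g(q))/(\beta-q)=d$. Evaluating this left limit along the rational subsequence $q=b_m\nearrow\beta$ (for which $n(b_m)=m$ and $g(b_m)=a_m$) gives $\lim_m(\alpha-a_m)/(\beta-b_m)=d$, i.e., the index form with the same $d$.

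For the reverse direction, fix a nondecreasing translation function $g$ from $\beta$ to $\alpha$. For any left-c.e.\ approximation $b_0\leq b_1\leq\dots\nearrow\beta$, the sequence $a_n:=g(b_n)$ is a left-c.e.\ approximation of $\alpha$, so the index form delivers $\lim_n(\alpha-g(b_n))/(\beta-b_n)=d$ at once. To upgrade this convergence along a single sequence to the full left limit $\lim_{q\nearrow\beta}(\alpha-g(q))/(\beta-q)=d$, I would exploit the monotonicity of $g$: for any rational $q\in[b_n,b_{n+1}]$, the inequalities $g(b_n)\leq g(q)\leq g(b_{n+1})$ and $\beta-b_{n+1}\leq\beta-q\leq\beta-b_n$ give
\[
\frac{\alpha-g(b_{n+1})}{\beta-b_n}\;\leq\;\frac{\alpha-g(q)}{\beta-q}\;\leq\;\frac{\alpha-g(b_n)}{\beta-b_{n+1}}.
\]
Writing $h(q)=(\alpha-g(q))/(\beta-q)$ and $\gamma_n=(\beta-b_{n+1})/(\beta-b_n)$, the outer expressions equal $h(b_{n+1})\,\gamma_n$ and $h(b_n)/\gamma_n$. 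If $(b_n)$ is chosen with $\gamma_n\to 1$, both ends squeeze to $d$; since the intervals $[b_n,b_{n+1}]$ for $n\geq N$ cover $[b_N,\beta)$ and $b_N\nearrow\beta$, this forces $h(q)\to d$ as $q\nearrow\beta$ over all rationals.

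The main technical obstacle is constructing such a ``slow'' left-c.e.\ approximation $(b_n)$ satisfying both $\gamma_n\to 1$ and $b_n\nearrow\beta$. My plan is, given any left-c.e.\ approximation $\beta_k\nearrow\beta$, to set $b_0=\beta_0$ and recursively define $b_n=b_{n-1}+(\beta_{k(n)}-b_{n-1})/n$, where $k(n):=\min\{k\geq n^2:\beta_k>b_{n-1}\}$; a one-step estimate then yields $\gamma_n\geq 1-1/n$, hence $\gamma_n\to 1$. The dovetailing condition $k(n)\geq n^2$ forces $\beta_{k(n)}\to\beta$, so if the nonincreasing quantity $\beta-b_n$ had a positive limit $L$, the increments $b_n-b_{n-1}$ would eventually be of order $(\beta-L)/n$ and their divergent sum would contradict $b_n\leq 1$; thus $L=0$ and $b_n\nearrow\beta$. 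Combining this construction with the sandwich argument completes direction B.
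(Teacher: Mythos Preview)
Your overall strategy matches the paper's, but each direction has an issue worth flagging.

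\textbf{Rational form $\Rightarrow$ index form.} The reduction ``may be assumed strictly increasing by discarding repeats'' is not justified as stated. Discarding repeats in $(b_n)$ passes to a subsequence, and proving the limit equals $d$ along that subsequence does not by itself give the limit for the original sequence. Concretely, with your single function $g(q)=a_{\min\{k:b_k\ge q\}}$, when $b$ has a plateau containing index $m$ one only gets $g(b_m)=a_{\text{first index of the plateau}}\le a_m$; evaluating the left limit at $q=b_m$ then bounds the $\limsup$ of $(\alpha-a_m)/(\beta-b_m)$ from above by $d$ but does not control the $\liminf$. The paper avoids this by introducing \emph{two} nondecreasing translation functions,
\[
f(q)=\max\{a_0,\,a_{\max\{t:b_t<q\}}\}\qquad\text{and}\qquad h(q)=a_{\min\{t:b_t>q\}},
\]
for which $f(b_n)\le a_n\le h(b_n)$ holds for every $n$ regardless of plateaus; applying the rational form to $f$ and to $h$ sandwiches $(\alpha-a_n)/(\beta-b_n)$ between two quantities both tending to $d$.

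\textbf{Index form $\Rightarrow$ rational form.} Your argument here is correct, including the construction of a ``slow'' approximation with $\gamma_n\to 1$, but it is considerably more work than needed. The paper uses exactly the sandwich inequality you wrote down, but then observes that the two outer quantities
\[
\rho_1(n)=\frac{\alpha-a_{n+1}}{\beta-b_n}\qquad\text{and}\qquad \rho_2(n)=\frac{\alpha-a_n}{\beta-b_{n+1}}
\]
are themselves instances of the index form: the sequences $(a_1,a_2,\dots)$ with $(b_0,b_1,\dots)$, and $(a_0,a_1,\dots)$ with $(b_1,b_2,\dots)$, are each a pair of left-c.e.\ approximations of $\alpha$ and $\beta$. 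Hence the index form directly yields $\rho_1(n)\to d$ and $\rho_2(n)\to d$ for \emph{any} strictly increasing choice of $(b_n)$, with no need to engineer $\gamma_n\to 1$. This shifted-pairs observation is the simplification you are missing.
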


\begin{proof}
First, we show that Theorem~\ref{BLP-LEFT-CE-index-form} easily follows from Theorem~\ref{BLP-LEFT-CE-rational-form} while preserving the value of~$d$. Let $a_0,a_1,\dots$ and $b_0,b_1,\dots$ be left-c.e.\ approximations of reals~$\alpha$ and $\beta$ where~$\beta$ is Martin-Löf random. Let~$d$ be as in Theorem~\ref{BLP-LEFT-CE-rational-form}. Define functions~$f$ and~$h$ on~$LC(\beta)$ by
\begin{equation}
    f(q) = \max\{a_0,a_{\max\{t \colon b_t < q\}}\}\makebox[4em]{and} h(q) = a_{\min\{t \colon b_t > q\}}.
\end{equation}
Recall that both the sequences~$a_0, a_1,\dots$ and~$b_0, b_1,\dots$ are nondecreasing (but not necessarily strictly increasing). So, by construction,~$f$ and~$h$ are nondecreasing translation functions from $\beta$ to $\alpha$, and we have for all~$n$ that ${f(b_n) \le a_n \le h(b_n)}$.
As a consequence, we obtain
\begin{equation}\label{eq:an-bn-limit-between-f-and-h-limit}
d = \lim\limits_{n\to\infty}\frac{\alpha - h(b_n)}{\beta - b_n} 
\leq  \liminf \limits_{n\to\infty}\frac{\alpha - a_n}{\beta - b_n} 
\leq  \limsup \limits_{n\to\infty}\frac{\alpha - a_n}{\beta - b_n} 
\leq \lim\limits_{n\to\infty}\frac{\alpha - f(b_n)}{\beta - b_n} = d,
\end{equation}
where the equalities hold by choice of~$d$ and by applying Theorem~\ref{BLP-LEFT-CE-rational-form} to~$h$ and to~$f$.
Now, in particular, the limit inferiore and limit superiore in~\eqref{eq:an-bn-limit-between-f-and-h-limit} are both equal to~$d$, i.e., the corresponding sequence of fractions converges to~$d$.
Theorem~\ref{BLP-LEFT-CE-index-form} follows because $a_0,a_1,\dots$ and $b_0,b_1,\dots$ have been chosen as arbitrary left-c.e.\ approximations of $\alpha$ and $\beta$, respectively.

Next we show that Theorem~\ref{BLP-LEFT-CE-rational-form} easily follows from Theorem~\ref{BLP-LEFT-CE-index-form}. Let~$\alpha$ and~$\beta$ be left-c.e.\ reals where~$\beta$ is Martin-Löf random, and fix some strictly increasing left-c.e.\ approximation $b_0,b_1,\dots$ of~$\beta$. Let~$d$ be as in Theorem~\ref{BLP-LEFT-CE-index-form}. Let~$g$ be an arbitrary nondecreasing translation function from~$\beta$ to~$\alpha$. For all~$n$, let~$a_n=g(b_n)$.
Then, for all rationals~$q$ and for~$n$ such that~$b_n \le q < b_{n+1}$, the monotonicity of~$g$ implies that ${a_n \leq g(q) \leq a_{n+1}}$, hence, for all such~$q$ and~$n$, it holds that
\begin{equation}\label{eq:sandwich-rho-between-rhoone-and-rhotwo}
\underbrace{\frac{\alpha-a_{n+1}}{\beta - b_n}}_{= \rho_1(n)} 
\le \underbrace{\frac{\alpha-g(q)}{\beta - q}}_{= \rho(q)}
< \underbrace{\frac{\alpha-a_n}{\beta - b_{n+1}}}_{= \rho_2(n)}.
\end{equation}
Now, the sequences~$b_0, b_1,\dots$ and~$b_1, b_2,\dots$ are both left-c.e.\ approximations of~$\beta$, and, by choice of~$g$ and by~\eqref{eq:translation-function}, the sequences $a_1, a_2,\dots$ and $a_0, a_1,\dots$ are both left-c.e.\ approximations of~$\alpha$. Hence, by Theorem~\ref{BLP-LEFT-CE-index-form}, we have
\[
d= \lim\limits_{n\rightarrow\infty} \rho_1(n)= \lim\limits_{n\rightarrow\infty} \rho_2(n). 
\]
So for given~$\varepsilon>0$, there is some index~$n(\varepsilon)$ such that, for all~$n> n(\varepsilon)$, the values~$\rho_1(n)$ and~$\rho_2(n)$ differ at most by~$\varepsilon$ from~$d$. But then, by~\eqref{eq:sandwich-rho-between-rhoone-and-rhotwo}, for every rational~$q$ where~$b_{n(\varepsilon)} < q < \beta$, the value~$\rho(q)$ differs at most by~$\varepsilon$ from~$d$. Thus, we have~\eqref{eq:BLP-LEFT-CE-rational-form}, i.e., the values~$\rho(q)$ converge to~$d$ when~$q$ tends to~$\beta$ from the left. Since~$g$ was chosen as an arbitrary nondecreasing translation function from~$\beta$ to~$\alpha$, Theorem~\ref{BLP-LEFT-CE-rational-form} follows.  
\end{proof}

Monotone translation functions have been considered before by Kumabe, Miyabe, Mizusawa, and Suzuki~\cite{Kumabe-etal-2020}, who characterized Solovay reducibility on the set of left-c.e.\ reals in terms of nondecreasing real-valued translation functions.

\medskip

It is not complicated to check that,
in case a left-c.e.\ real is Solovay reducible to another left-c.e.\ real, this can always be
witnessed by some nondecreasing translation function while preserving any given Solovay constant.

\begin{proposition}\label{monotonizable}
Let~$\alpha$ and~$\beta$ be left-c.e.\ reals, and let~$c$ be a real. Then~$\alpha$ is Solovay reducible to~$\beta$ with the Solovay constant~$c$ if and only~$\alpha$ is Solovay reducible to~$\beta$ via a nondecreasing translation function $g$ and the Solovay constant~$c$. 
\end{proposition}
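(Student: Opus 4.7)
The forward direction of the equivalence is immediate from the definition: any nondecreasing translation function from $\beta$ to $\alpha$ satisfying the Solovay condition with constant $c$ is, in particular, a translation function satisfying the Solovay condition with constant $c$. For the reverse direction, suppose $g$ is a (possibly non-monotone) translation function from $\beta$ to $\alpha$ witnessing the reduction with constant $c$; the plan is to \emph{monotonize} $g$ while preserving the constant.

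Since $\beta$ is left-c.e.\ and positive (the case $\beta=0$ is vacuous, as $LC(\beta)$ is empty), I would fix a strictly increasing computable sequence of rationals $b_0 < b_1 < \cdots$ with $b_n \nearrow \beta$ and $b_n < \beta$ for all $n$. Then every $g(b_n)$ is defined and lies in $[0,\alpha)$ with $\alpha - g(b_n) < c(\beta - b_n)$. Set
\[
a_n := \max\{g(b_0), g(b_1), \ldots, g(b_n)\},
\]
so that $(a_n)$ is a computable nondecreasing sequence of rationals with $a_n < \alpha$, $a_n \ge g(b_n)$, and $\alpha - a_n \le \alpha - g(b_n) < c(\beta - b_n)$. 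Equation~\eqref{eq:translation-function} gives $g(b_n) \to \alpha$, hence also $a_n \nearrow \alpha$.

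Next, I would define the monotonized translation function by $\tilde g(q) := a_{n(q)}$ where $n(q) := \min\{n\in\mathbb{N} : b_n > q\}$. For $q < \beta$, since $b_n \nearrow \beta > q$, the index $n(q)$ exists and can be computed by enumerating $b_0, b_1, \ldots$ until one exceeds $q$, so $\tilde g$ is partially computable. Monotonicity follows because $q_1 < q_2$ yields $\{n : b_n > q_2\} \subseteq \{n : b_n > q_1\}$, hence $n(q_1) \le n(q_2)$. Since $b_{n(q)} > q$ by construction, the Solovay condition is preserved with the same constant:
\[
0 < \alpha - \tilde g(q) = \alpha - a_{n(q)} < c(\beta - b_{n(q)}) < c(\beta - q).
\]
The left-limit property $\lim_{q \nearrow \beta} \tilde g(q) = \alpha$ then follows from $a_n \nearrow \alpha$ together with $n(q) \to \infty$ as $q \nearrow \beta$.

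There is no serious obstacle; the one design choice worth highlighting is taking $n(q)$ to be the least $n$ with $b_n > q$ (rather than, say, the greatest $n$ with $b_n \le q$), which ensures the strict inequality $\beta - b_{n(q)} < \beta - q$ needed to transport the Solovay constant $c$ intact.
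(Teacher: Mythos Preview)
Your proof is correct and follows essentially the same strategy as the paper's: pass to left-c.e.\ approximations $(a_n)$ and $(b_n)$ satisfying $\alpha - a_n < c(\beta - b_n)$ and then define the monotone translation function as a step function along $(b_n)$. The paper obtains such sequences by invoking the index characterization (Proposition~\ref{Solovay-reducibility:index:exist}) as a black box and sets $g(q) = a_{\max\{n : b_n \le q\}}$, whereas you build $(a_n)$ directly by taking running maxima of the samples $g(b_n)$ and use the index $n(q)=\min\{n : b_n > q\}$ instead. Your closing remark is on point: choosing the first $b_n$ strictly above $q$ is exactly what yields $\beta - b_{n(q)} < \beta - q$ and hence transports the Solovay constant $c$ without loss, a detail the paper's formulation with $\max\{n : b_n \le q\}$ glosses over.
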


\begin{proof}
For a proof of the nontrivial direction of the asserted equivalence, assume that $\alpha\redsolovay\beta$ with the Solovay constant $c$. By Proposition~\ref{Solovay-reducibility:index}, choose left-c.e.\ approximations $a_0,a_1,\dots\nearrow\alpha$ and $b_0,b_1,\dots\nearrow\beta$ such that~\eqref{eq:Solovay-reducibility-index} holds.

Then $\alpha$ is Solovay reducible to~$\beta$ with the Solovay constant $c$ via the nondecreasing translation function $g$ defined by $g(q)=a_{\max\{n\colon b_n\leq q\}}$.
\end{proof} 

\begin{remark}
    Note that strengthening Definition~\ref{Solovay-reducibility} by considering only nondecreasing translation functions yields a well-defined reducibility $\leq_S^m$ on $\mathbb{R}$, called \defhigh{monotone Solovay reducibility}. The basic properties of $\leq_S^m$ have been investigated by Titov~\cite[Chapter~3]{Titov-2023}. Note that Proposition~\ref{monotonizable} shows that $\leq_S^{m}$ and $\redsolovay$ coincide on the set of left-c.e.\ reals.
\end{remark}

In Theorem \ref{BLP-LEFT-CE-rational-form}, requiring the function $g$ to be nondecreasing is crucial because, for every $\alpha$ and $\beta$ that fulfills the conditions there, we can construct a nonmonotone translation function $g$ such that the left limit in~\eqref{eq:BLP-LEFT-CE-rational-form} does not exist, as we will see in the next proposition.

\begin{proposition}
    Let $\alpha,\beta$ be two left-c.e.\ reals such that $\alpha\redsolovay\beta$ with a Solovay constant $c$.
    Then there exists a translation function $g$ from~$\beta$ to~$\alpha$ such that $\alpha\redsolovay\beta$ with the Solovay constant $c$ via $g$, wherein
    \begin{equation}\label{eq:liminf=0-limsup>0}
         \liminf\limits_{q\nearrow\beta}\frac{\alpha - g(q)}{\beta - q} = 0\makebox[6em]{and}\limsup\limits_{q\nearrow\beta}\frac{\alpha - g(q)}{\beta - q} > 0.
    \end{equation}
\end{proposition}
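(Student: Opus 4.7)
The plan is to start from a nondecreasing translation function and perturb it at a sparse sequence of rational ``checkpoints'' where we force the ratio to alternate between very small and bounded-below values. First I would invoke Proposition~\ref{monotonizable} to obtain a nondecreasing translation function $g_0$ from $\beta$ to $\alpha$ with Solovay constant $c$, and Proposition~\ref{Solovay-reducibility:index:exist} to fix strictly increasing left-c.e.\ approximations $a_n\nearrow\alpha$ and $b_n\nearrow\beta$ satisfying $\alpha-a_n<c(\beta-b_n)$ for every~$n$. Pick two disjoint subsequences of rationals $p_k=b_{2k}$ and $r_k=b_{2k+1}$, both strictly increasing to~$\beta$.

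I will define $g$ to agree with $g_0$ on $\rationals\cap[0,1)$ except at these checkpoints. At each $p_k$ (intended to drive the limit inferior to $0$), set $g(p_k):=g_0(b_{M(k)})$ where $M\colon\mathbb{N}\to\mathbb{N}$ is a computable function with $M(k)>2k$; monotonicity of $g_0$ and the choice $b_{M(k)}>p_k$ give the Solovay inequality $\alpha-g(p_k)<c(\beta-b_{M(k)})<c(\beta-p_k)$, so that the ratio at $p_k$ is bounded above by $c(\beta-b_{M(k)})/(\beta-p_k)$. At each $r_k$ (intended to secure a positive limit superior), I would set $g(r_k)$ to be a rational strictly smaller than $g_0(r_k)$ while maintaining $\alpha-g(r_k)<c(\beta-r_k)$, shifting $g(r_k)$ downward by a computably verifiable rational amount obtained from the slack provided by $a_n,b_n$ in the Solovay inequality. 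The resulting $g$ is partial computable, is defined throughout $LC(\beta)$, and satisfies the Solovay condition with constant~$c$.

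The oscillation then requires two verifications. The easier direction $\limsup>0$ follows because, on the $r_k$'s, the downward perturbation produces a rational-valued lower bound on $(\alpha-g(r_k))/(\beta-r_k)$ that stays bounded below by a fixed positive constant along an infinite subsequence (using that the excess $c(\beta-r_k)-(\alpha-g_0(r_k))$ in the Solovay inequality at $r_k$ is strictly positive and can be approximated from below via $a_n,b_n$). The main obstacle is $\liminf=0$ along $(p_k)$: since for non-computable $\beta$ no fixed computable $M$ can guarantee $(\beta-b_{M(k)})/(\beta-b_{2k})\to 0$, one must choose $M$ adaptively from the approximations $a_n$ and $b_n$, using the Solovay inequality as a computable bridge between the rational quantities $a_{M(k)}-a_{2k}$ and the uncomputable residues $\beta-b_{M(k)}$. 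The delicate heart of the proof is engineering this adaptive $M$ so that $(\beta-b_{M(k)})/(\beta-p_k)\to 0$ along some infinite subsequence, which in turn forces the ratio of $g$ at those points to $0$.
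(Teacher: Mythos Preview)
Your approach has a genuine gap in the $\liminf=0$ direction that you yourself flag as ``the delicate heart of the proof'' without resolving it. The problem is structural: your checkpoints $p_k=b_{2k}$ accumulate at $\beta$, so the denominator $\beta-p_k$ tends to $0$. To force the ratio $(\alpha-g(p_k))/(\beta-p_k)$ to $0$ you then need the numerator $\alpha-a_{M(k)}$ (equivalently, via your bound, $\beta-b_{M(k)}$) to vanish \emph{faster} than $\beta-b_{2k}$. But the only computable handle you have is the Solovay inequality $\alpha-a_n<c(\beta-b_n)$, which gives a \emph{lower} bound on $\beta-b_n$, not the upper bound you need. There is no computable bridge from the rationals $a_n,b_n$ to an upper estimate on the residues $\beta-b_{M(k)}$, so no adaptive choice of $M$ can be justified with the tools in hand. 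Your $\limsup$ argument has the same flavour of trouble: the ``slack'' $c(\beta-r_k)-(\alpha-g_0(r_k))$ that you want to exploit is not computable, and the perturbation size you need is of order $\beta-r_k$, which again you cannot estimate.

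The paper sidesteps both issues by a simple but decisive idea: instead of planting the oscillation at a single sequence tending to $\beta$, it plants it \emph{inside each gap} $(b_n,b_{n+1})$. On the dyadic grid $b_n+(b_{n+1}-b_n)2^{-k}$ one sets $g=a_{n+k}$; as $k\to\infty$ these points stay near $b_n$, so $\beta-q$ is bounded below by $\beta-b_{n+1}>0$ while $\alpha-g(q)=\alpha-a_{n+k}\to 0$, giving ratio $\to 0$ with no need to compare residues. Doing this for every $n$ yields arbitrarily small ratios arbitrarily close to $\beta$, hence $\liminf=0$. For $\limsup$, on the triadic grid $b_n-(b_n-b_{n-1})3^{-k}$ one sets $g=a_{n+k}-c(b_{n+k}-b_n)$; a direct computation shows the ratio at these points tends to $c$ as $k\to\infty$, and the Solovay bound with constant $c$ is preserved by telescoping. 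The key contrast with your plan is that the denominator is kept bounded away from $0$ locally, so no rate-of-convergence information about $\beta$ is ever needed.
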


\begin{proof} By Proposition \ref{Solovay-reducibility:index:exist}, fix left-c.e.\ approximations 
$a_0,a_1\dots\nearrow\alpha$ and $b_0,b_1,\dots\nearrow\beta$ such that \eqref{eq:Solovay-reducibility-index} holds.

The desired translation function $g$ is defined by letting
\begin{align*}
    &g(b_n+\frac{b_{n+1} - b_n}{2^k}) = a_{n+k} && \text{ for all }n,k>0\text{ in case }b_n\neq b_{n+1},
    \\
    &g(b_n - \frac{b_{n}-b_{n-1}}{3^k}) = a_{n+k} - c(b_{n+k} - b_n) && \text{ for all }n,k>0\text{ in case }b_{n-1}\neq b_n,
    \\
    &g(q) = a_{\min\{n: b_n\geq q\}} && \text{ for all other rationals }q<\beta.
\end{align*}
Obviously, $g$ is partially computable and defined on all rationals $< \beta$. 

So, it suffices to show that $g$ satisfies the conditions~\eqref{eq:Solovay-reducibility} and~\eqref{eq:liminf=0-limsup>0} (recall that the Solovay condition~\eqref{eq:Solovay-reducibility} implies the condition~\eqref{eq:translation-function} in the definition of translation functions).

In order to argue that~\eqref{eq:Solovay-reducibility} holds, we consider three cases:
\begin{itemize}
    \item for $q = b_n + \frac{b_{n+1}-b_n}{2^k}$ for some $n,k>0$ where $b_n\neq b_{n+1}$,~\eqref{eq:Solovay-reducibility} is implied by \[\alpha - g(q) = \alpha - a_{n+k}\leq \alpha - a_{n+1}<c(\beta - b_{n+1})<c(\beta - q);\]
    \item for $q = b_n - \frac{b_n - b_{n-1}}{3^k}$ for some $n,k>0$ where $b_{n-1}\neq b_n$,~\eqref{eq:Solovay-reducibility}
    follows from 
    \[\alpha - g(q) = \alpha - a_{n+k} + c(b_{n+k} - b_n) < c(\beta - b_{n+k}) + c(b_{n+k} - b_n) < c(\beta - q);\]
    \item for all other $q$,~\eqref{eq:Solovay-reducibility} is implied by 
    \[\alpha - g(q) = \alpha - a_{\min\{n:b_n\geq q\}} < c(\beta - b_{\min\{n:b_n\geq q\}})\leq c(\beta - q)\]
\end{itemize}
(note that, in each case, the first strict inequality follows from~\eqref{eq:Solovay-reducibility-index}).

Further, the left part of~\eqref{eq:liminf=0-limsup>0} holds since, for every $n$ such that $b_n\neq b_{n+1}$, the real $\alpha$ is an accumulation point of $g(q)|_{[b_n, b_{n+1}]}$ since $g(b_n+\frac{b_{n+1} - b_n}{2^k})\underset{k\to\infty}{\to}\alpha$.

Finally, the right part of~\eqref{eq:liminf=0-limsup>0} holds since, for every $n$ such that $b_{n-1}\neq b_n$, the constant $c$ is an accumulation point of $\frac{\alpha - g(q)}{\beta - q}|_{[b_{n-1},b_n]}$ since
\[\frac{\alpha - g(b_n - \frac{b_n - b_{n-1}}{3^k})}{\beta - (b_n - \frac{b_n - b_{n-1}}{3^k})} = \frac{\alpha - a_{n+k} + c(b_{n+k} - b_n)}{\beta - b_n + \frac{b_n - b_{n-1}}{3^k}} \underset{k\to\infty}{\to}\frac{c(\beta - b_n)}{\beta - b_n} = c.\]
\end{proof}

The latter proposition motivates to consider the Solovay reducibility via only nondecreasing translation functions for the extension of the Limit Theorem of Barmpalias and Lewis-Pye on $\mathbb{R}$.

\section{The theorem}\label{chapter-the-theorem}

\begin{theorem}\label{theorem:BLP-generalized}
    For every real $\alpha$ and every Martin-Löf random real $\beta$, there exists a constant $d\geq 0$ such that, for every nondecreasing translation function $g$ from $\beta$ to $\alpha$, it holds that
    \begin{equation}\label{eq:BLP-generalized}
        \lim\limits_{q\nearrow\beta}\frac{\alpha - g(q)}{\beta - q} = d.
    \end{equation}
\end{theorem}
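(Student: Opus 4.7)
The plan is to adapt the Martin-Löf test constructions of Ku\v{c}era-Slaman and of Barmpalias-Lewis-Pye from the left-c.e.\ setting to the general one, with the nondecreasing translation function $g$ playing the role previously played by a left-c.e.\ approximation.

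First, I would establish that for any fixed nondecreasing translation function $g$ from $\beta$ to $\alpha$, the ratio $(\alpha - g(q))/(\beta - q)$ is bounded as $q\nearrow\beta$, generalizing Lemma~\ref{KS-LEFT-CE}. For each $c\in\mathbb{N}$, consider the c.e.\ open set
\[
V_c \;=\; \bigcup_{q\in\mathbb{Q},\,g(q)\downarrow}\bigl(q,\;q+2^{-c}(\alpha^{\ast}-g(q))\bigr),
\]
where $\alpha^{\ast} := \sup\{g(q) : g(q)\downarrow\}$ is a left-c.e.\ upper bound on $\alpha$ (obtained as the running supremum over halting computations of $g$). The monotonicity of $g$ on its domain, together with a telescoping argument in the style of Ku\v{c}era-Slaman, bounds $\mu(V_c)$ by $2^{-c}\alpha^{\ast}$, making $(V_c)_c$ a Martin-Löf test. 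Unboundedness of the ratio would put $\beta$ into every $V_c$, contradicting its Martin-Löf randomness.

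Second, I would upgrade boundedness to the existence of the limit in the spirit of Barmpalias-Lewis-Pye. Supposing the limit fails to exist, one finds rationals $c_1<c_2$ and arbitrarily many \emph{low-high pairs} $q_L<q_H<\beta$ with $(\alpha - g(q_L))/(\beta - q_L)<c_1$ and $(\alpha - g(q_H))/(\beta - q_H)>c_2$. Since $g$ is nondecreasing, each such pair forces $\beta - q_H<(c_1/c_2)(\beta - q_L)$. Cascading these pairs into chains yields an exponentially fast effective approximation of $\beta$, from which a further Martin-Löf test can be constructed that contradicts the randomness of $\beta$.

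Third, for the independence of the limit $d$ from the choice of $g$, compare two nondecreasing translation functions $g_1,g_2$ from $\beta$ to $\alpha$. Their pointwise maximum $g_3$ and minimum $g_4$ are again valid translation functions, with associated limits $d_3$ and $d_4$ satisfying $d_3\leq\min(d_{g_1},d_{g_2})$ and $\max(d_{g_1},d_{g_2})\leq d_4$. If $d_3<d_4$, then $g_3-g_4=|g_1-g_2|$ satisfies $(g_3-g_4)/(\beta-q)\to d_4-d_3>0$, so for any rational $\delta\in(0,d_4-d_3)$ the computable quantity $F(q):=q+(g_3(q)-g_4(q))/\delta$ approximates $\beta$ with an error of order $o(\beta-q)$, from which a Martin-Löf test around these approximations again traps $\beta$.

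The hard part will be the measure bookkeeping in Steps~2 and~3: controlling the total measure of each test without direct computable access to $\beta$ or $\alpha$. In every case, the monotonicity of the translation functions together with the left-c.e.\ surrogate $\alpha^\ast$ is what allows contributions of overlapping intervals to be telescoped---the combinatorial heart of the generalization from the left-c.e.\ case to the general one.
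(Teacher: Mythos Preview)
Your Step~1 has a concrete error: the claimed measure bound $\mu(V_c)\le 2^{-c}\alpha^{\ast}$ is false. Take $g(q)=q/2$ on all rationals in $[0,\beta)$; then for any $x\in(0,\beta)$ and any $c$, choosing $q$ just below $x$ puts $x$ into $(q,\,q+2^{-c}(\alpha^{\ast}-g(q)))$, so $V_c\supseteq(0,\beta)$ for every $c$. The Ku\v{c}era--Slaman telescoping works for a \emph{sequence} $b_0,b_1,\dots$ because each new interval only extends the union by a controlled amount; once you quantify over a dense set of left endpoints, that control is lost. The paper handles this by building the test from \emph{pairs} of enumerated points: it adds the interval $[q_m,\,q_k+2^{-(i+1)}(g(q_m)-g(q_k))]$ only when the secant slope between $q_k<q_m$ exceeds $2^{i+1}$, and then proves a nontrivial combinatorial bound on the resulting union (via a carefully chosen subsequence $q_{i_0}<q_{i_1}<\cdots$ along which the bound telescopes). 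Monotonicity of $g$ is used, but not in the way you sketch.

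Step~2 has the right inequality $\beta-q_H<(c_1/c_2)(\beta-q_L)$, but ``cascading into chains yields an exponentially fast effective approximation of $\beta$'' hides the whole difficulty: recognising a point as low or high requires comparing $g(q)$ with $\alpha-c_i(\beta-q)$, which you cannot do effectively. The paper's workaround is to rewrite the conditions via $\gamma(q)=g(q)-cq$ and $\delta(q)=g(q)-dq$, enumerate intervals $[\,(d-c)q,\;\gamma(q')-\delta(q)\,]$ for suitable pairs $q,q'$, and run an intricate ``index stair'' bookkeeping to keep the total measure bounded; this is the technical heart of the argument and has no analogue in your sketch.

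Step~3 is closer in spirit to the paper, which also builds a Solovay test from the difference of two translation functions. However, your claimed error bound for $F(q)=q+(g_3(q)-g_4(q))/\delta$ is off: with $\delta<d_4-d_3$ the error is of order $\Theta(\beta-q)$, not $o(\beta-q)$. This is repairable (one still gets $\beta$ trapped between $q$ and $F(q)$, enough for a Solovay test after controlling overlaps), but as written it is not correct.
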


\begin{proof}

Let $\alpha$ and $\beta$ be two reals where $\beta$ is Martin-Löf random, and let $g$ be a nondecreasing translation function from~$\beta$ to~$\alpha$.

\medskip

The proof is organized as follows.

In Section~\ref{proof:bounded}, we show that $\alpha$ is Solovay reducible to $\beta$ via the translation function $g$, i.e., that the fraction in~\eqref{eq:BLP-generalized} is bounded from above for $q\nearrow\beta$. This fact can be obtained using Claims~\ref{claim:bounded} through~\ref{claim:beta-not-in-test-implies-reducibility}, which we will state in the beginning of Section~\ref{proof:bounded}, subsequent to introducing some notation. Claims~\ref{claim:bounded} and~\ref{claim:beta-not-in-test-implies-reducibility} follow by arguments that are similar to the ones used in connection with the case of left-c.e.\ reals~\cite{Barmpalias-Lewispye-2017, Miller-2017}, and Claim~\ref{claim:overlapping} can be obtained straightforwardly.

Next, in Section~\ref{proof:exists}, we show that the left limit considered in the theorem exists by assuming the opposite, namely, that left limit inferiore of the fraction in~\eqref{eq:BLP-generalized} does not coincide with its left limit superiore (note that, by the previous section, both of them differ from infinity). The contradiction will be obtained rather directly from Claims~\ref{infinite-covering} through~\ref{general-bound-for-coverall}, which we will also state in the beginning of the section. Claims~\ref{infinite-covering} and~\ref{claim:upper-bound-for-single-stage} follow by arguments that are similar to the ones used in connection with the case of left-c.e.\ reals~\cite{Kucera-Slaman-2001, Miller-2017}, whereas the proof of Claim~\ref{general-bound-for-coverall} is rather involved and has no counterpart in the left-c.e.\ case. 

Finally, in Section~\ref{proof:unique}, we show that the left limit considered in the theorem does not depend on the choice of the translation function by assuming the opposite, namely, that there are two translation functions having different left limits of the fraction in~\eqref{eq:BLP-generalized}.

\subsection*{The notation}

In the remainder of this proof and unless explicitly stated otherwise, the term interval refers to a closed subinterval of the real numbers that is bounded by rationals. Lebesgue measure is denoted by~$\mu$, i.e., the Lebesgue measure, or measure, for short, of an interval~$U$ is~${\mu(U)= \max U - \min U}$.

A \defhigh{finite test} is an empty set or a tuple $A=(U_0,\dots,U_m)$ with $m\geq 0$ where the~$U_i$ are not necessarily distinct nonempty intervals. For such a finite test~$A$, its \defhigh{covering function} is
\begin{align*}
\covertestfun{A} \colon [0,1] &\longrightarrow \mathbb{N},\\
x &\longmapsto \#\{i\in\{0,\dots,m\}: x\in U_i\},
\end{align*}
that is,~$\covertestfun{A}(x)$ is the number of intervals in $A$ that contain the real number~$x$. Furthermore, the \defhigh{measure} of~$A$ is~$\mu(A) = \sum_{i\in\{0,\dots,m\}} \mu(U_i)$. 

It is easy to see that the measure of a given finite test~$A$ can be computed by integrating its covering function on the whole domain $[0,1]$, i.e., for every finite test $A$, it holds that
\begin{equation}\label{eq:measure-cover}
\mu(A) =\int\limits _0^1 \covertest{A}{x}dx,  
\end{equation}
as follows by induction on the number of intervals contained in the finite test~$A$.

The induction base holds true because, in case $A=\emptyset$, we obviously have
\[\mu(A) = 0 = \int\limits_0^1 0 dx = \int\limits_0^1 \covertest{A}{x}dx\]
and, in case~$A=(U)$ is a singleton, the function~$\covertestfun{A}$ is just the indicator function of~$U$, while the induction step follows from additivity of the integral operator because the function~$\covertestfun{(U_0, \dots, U_{n+1})}$ is the sum of~$\covertestfun{(U_0, \dots, U_{n})}$ and~$\covertestfun{(U_{n+1})}$.

Observe that by our definition of covering function, the values of the covering functions of the two tests~$([0.2, 0.3],[0.3,0.7])$ and~$([0.2,0.7])$ differ on the argument~$0.3$. Furthermore, for a given finite test and a rational~$q$, by adding intervals of the form~$[q,q]$ the value of the corresponding covering function at~$q$ can be made arbitrarily large without changing the measure of the test.  However, these observations will not be relevant in what follows since they relate only to the value of covering functions at rationals.

\medskip

For all three sections, we fix some effective enumeration~$p_0, p_1, \dots$ without repetition of the domain of~$g$ and, for all natural $n$, define~$Q_n = \{p_0, \dots, p_n\}$.

\medskip

\subsection{The fraction is bounded from above}\label{proof:bounded}

First, we demonstrate that the translation function $g$ witnesses the reducibility $\alpha\redsolovay\beta$, or, equivalently, that
\begin{equation}\label{eq:bounded}
    \exists c \forall q<\beta \big(\frac{\alpha - g(q)}{\beta - q}<c\big).
\end{equation}

For all $n$ and $i$, we will construct a finite test~$T_i^n$ by an essential modification of the construction used by Miller~\cite[Lemma 1.1]{Miller-2017} in the left-c.e.\ case. The construction is effective in the sense that it always terminates and is uniform in $n$ and $i$.

For every $n$ and $i$, let $Y_i^n$ be the union of all intervals lying in the finite test $T_i^n$ (note that $Y_i^n$ can be represented as a disjoint union of finitely many intervals).
For every $i$, let $Y_i$ denote the union of the sets $Y_i^0,Y_i^1,\dots$ .

The property~\eqref{eq:bounded} can be obtained from the following three claims.

\begin{claimbbb}\label{claim:bounded}
    For every $i$ and $n$, it holds that
    \begin{equation}
        \mu(Y_i^n) < 2^{-(i+1)}.
    \end{equation}
\end{claimbbb}

\begin{claimbbb}\label{claim:overlapping}
    For every $i$ and $n$, it holds that 
    \begin{equation}
        Y_i^{n}\subseteq Y_i^{n+1}.
    \end{equation}
\end{claimbbb}

\begin{claimbbb}\label{claim:beta-not-in-test-implies-reducibility}
    For every $i$, the following implication holds:
    \begin{equation}
        \beta\notin Y_i \implies \alpha\redsolovay\beta\text{ via }g\text{ with the Solovay constant }2^{-i}.
    \end{equation}
\end{claimbbb}

From the first two claims we easily obtain that the Lebesgue measure of the set $Y_i$ is also bounded by~$2^{-(i+1)}$ for every $i$.

For all $i$ and $n > 0$, $Y_i^n\setminus Y_i^{n-1}$ is a disjoint union of finitely many intervals, wherein a list of intervals is computable in $i$ and $n$ because the same holds for $Y_i^n$ and $Y_i^{n-1}$. 

Accordingly, the set $Y_i$ is equal to the union of a set $S_i$ of intervals with rational endpoints that is effectively enumerable in $i$ and where the sum of the measures of these intervals is at most $2^{-(i+1)}$. By the two latter properties, the sequence $S_0,S_1,\dots$ is a Martin-Löf test.

The real $\beta$ is Martin-Löf random, hence the test $S_0,S_1,\dots$ should fail on~$\beta$.
Therefore, we can fix an index $i$ such that $\beta$ is not contained in $Y_i$. By Claim~\ref{claim:beta-not-in-test-implies-reducibility}, we obtain that $\alpha\redsolovay\beta$ via $g$ with the Solovay constant $2^{-i}$, which implies~\eqref{eq:bounded} directly by definition of $\redsolovay$.

\medskip

It remains to construct the finite test~$T_i^n$ uniformly in $i$ and $n$ and check that Claims~\ref{claim:bounded} through~\ref{claim:beta-not-in-test-implies-reducibility} are fulfilled.

\subsection*{Outline of the construction and some properties of the finite test $T_i^n$}

Fix~$n,i\geq 0$. Let~$\{q_0 <  \cdots < q_n\}$ be the set $\{p_0,p_1,\dots,p_n\}$ sorted increasingly (remind that $p_0,\dots,p_n$ are the first $n+1$ elements of the fixed effective enumeration of the domain of $g$). 
Due to the technical reasons, let $q_{n+1} = 1$.
We describe the construction of the finite test~$T_i^n$, which is a reworked version of a construction used by Miller~\cite[Lemma 1.1]{Miller-2017} in connection with left-c.e.\ reals. 

For every two indices $k,m$, such that $0\leq k<m\leq n$, define the interval
\begin{equation}\label{eq:define-intindexone}
    \intindexone{k}{m} =
    \begin{cases}
        [q_m, q_k+ \frac{g(q_m) - g(q_k)}{2^{(i+1)}}]&\text{ if }\frac{g(q_m) - g(q_k)}{q_m - q_k} \geq 2^{i+1},
        \\
        \emptyset &\text{ otherwise},
    \end{cases}
\end{equation}
and put the intersection of~$\intindexone{k}{m}$ with the unit interval $[0,1]$ into the test $T_i^n$. Further, due to the technical reason, for all $k,m$ such that $0\leq m\leq k\leq n$, set $\intindexone{k}{m} = \emptyset$.

\begin{claimbbb}\label{only-neighbor-matters}
    For every index~$m$ in the range~$0,\dots,n$, every real $x\in [q_m, q_{m+1})$ and every $k<m$, the following equivalence holds true:
    \begin{equation}\label{eq:only-neighbor-matters}
        \exists l \big(x\in \intindexone{k}{l}\big) \iff x\in \intindexone{k}{m}.
    \end{equation}
\end{claimbbb}

\begin{proof}
The direction \say{$\impliedby$} is straightforward. To prove \say{$\implies$}, fix an index~$l$ and a real $x\in [q_m, q_{m+1}) \cap \intindexone{k}{l}$. Note that, in case $l>m$, it holds that $\min\intindexone{k}{l} = q_l\geq q_{m+1} > x$, hence $x$ cannot lie in $\intindexone{k}{l}$, so we have $l\leq m$.
Therefore, $x\in \intindexone{k}{l}$ implies that 
\begin{equation}\label{eq:neighbors-right-point}
    x\leq \max\intindexone{k}{l} = q_l+\frac{g(q_l) - g(q_k)}{2^{i+1}} \leq q_m + \frac{g(q_m) - g(q_k)}{2^{i+1}} = \max\intindexone{k}{m},
\end{equation}
where the second inequality holds since $q_l\leq q_m$ by $l\leq m$ and $g(q_l)\leq g(q_m)$ by monotonicity of $g$.
From~\eqref{eq:neighbors-right-point} and $x\geq q_m = \min\intindexone{k}{m}$, we obtain that $x\in \intindexone{k}{m}$.
\end{proof}

\begin{claimbbb}
    For every two indices $k$ and $l$ where $0\leq k <l \leq n$, the following implications hold:
    \begin{eqnarray}
        \label{eq:previous-k-annulated}
        \frac{g(q_l) - g(q_k)}{q_l - q_k} \leq 2^{i+1} &\implies& \forall m>l\big(\intindexone{k}{m}\subseteq \intindexone{l}{m}\big),
        \\
        \label{eq:next-l-annulated}
        \frac{g(q_l) - g(q_k)}{q_l - q_k} \geq 2^{i+1} &\implies& \forall m\big(\intindexone{k}{m}\supseteq \intindexone{l}{m}\big).
    \end{eqnarray}
\end{claimbbb}

\begin{proof}
    Fix $k$ and $l$ such that $0\leq k<l\leq n$.

    In order to prove the first implication, assume that $k,l$
    fulfill $\frac{g(q_l) - g(q_k)}{q_l - q_k} \leq 2^{i+1}$, which is equivalent to
    \begin{equation}\label{eq:l-k>inequality}
        q_l - q_k\geq \frac{g(q_l) - g(q_k)}{2^{i+1}},
    \end{equation}
    and fix $m>l$. For every real $x\in \intindexone{k}{m}$, definition of $\intindexone{k}{m}$ implies the inequality $q_m\leq x\leq q_k+ \frac{g(q_m) - g(q_k)}{2^{i+1}}$.
    Inter alia, it means that 
    \begin{equation}\label{eq:q_l<x-and-x-k-inequality}
        q_l<q_m\leq x\makebox[5em]{and}x - q_k \leq \frac{g(q_m) - g(q_k)}{2^{i+1}}.
    \end{equation}
    Hence, we obtain that
    \begin{equation}\label{eq:x-l-inequality}
        x - q_l = (x - q_k) - (q_l - q_k) \leq \frac{g(q_m) - g(q_k)}{2^{i+1}} - \frac{g(q_l) - g(q_k)}{2^{i+1}} = \frac{g(q_m) - g(q_l)}{2^{i+1}},
    \end{equation}
    where the inequality follows from the right part of~\eqref{eq:q_l<x-and-x-k-inequality} and~\eqref{eq:l-k>inequality}.
    The right side of~\eqref{eq:previous-k-annulated} is implied by the left part of~\eqref{eq:q_l<x-and-x-k-inequality} and~\eqref{eq:x-l-inequality}.

    For the second implication, assume that $k,l$
    fulfill $\frac{g(q_l) - g(q_k)}{q_l - q_k} \geq 2^{i+1}$, which is equivalent to
    \begin{equation}\label{eq:l-k<inequality}
        q_l - q_k\leq \frac{g(q_l) - g(q_k)}{2^{i+1}}.
    \end{equation}
    In case $m\leq l$, the right side of~\eqref{eq:next-l-annulated} is obvious since~$\intindexone{l}{m}=\emptyset$, so it suffices to consider $m>l$. For every real~$x\in\intindexone{l}{m}$, definition of~$\intindexone{l}{m}$ implies the inequality~$q_m\leq x\leq q_l+\frac{g(q_m) - g(q_l)}{2^{i+1}}$. Inter alia, it means that
    \begin{equation}\label{eq:q_k<x}
        q_k<q_l<q_m\leq x\makebox[5em]{and}x-q_l\leq \frac{g(q_m) - g(q_l)}{2^{i+1}}.
    \end{equation}
    Hence, we obtain similar as in the proof of previous implication that
    \begin{equation}\label{eq:x-k-+-inequality}
    x - q_k = (x-q_l)+(q_l-q_k) \leq \frac{g(q_m) - g(q_l)}{2^{i+1}} + \frac{g(q_l)-g(q_k)}{2^{i+1}} = \frac{g(q_m)-g(q_l)}{2^{i+1}}.
    \end{equation}
    The right part of~\eqref{eq:next-l-annulated} is implied by the left part of~\eqref{eq:q_k<x} and~\eqref{eq:x-k-+-inequality}.
\end{proof}

\subsection*{Preliminaries for the proof of Claim~\ref{claim:bounded}}
Let $0=i_0<i_1<\dots<i_s$ be the indices in the range $0,\dots,n$ such that
\begin{equation}\label{eq:first-layer-of-stair-last-step}
    \frac{g(q_m) - g(q_{i_s})}{q_m - q_{i_s}} > 2^{i+1} \makebox[4em]{for all}m\in\{i_s,\dots,n\}
\end{equation}
and, for every $j\in\{0,\dots,s-1\}$,
\begin{eqnarray}
    \label{eq:first-layer-of-stair-left-part}
    \frac{g(q_{i_{j+1}}) - g(q_{i_j})}{q_{i_{j+1}} - q_{i_j}} &\leq& 2^{i+1},
    \\
    \label{eq:first-layer-of-stair-right-part}
    \frac{g(q_m) - g(q_{i_j})}{q_m - q_{i_j}} &>& 2^{i+1} \makebox[4em]{for all}m\in\{i_j+1,\dots,i_{j+1}-1\}.
\end{eqnarray}

Further, due to the technical reasons, we fix an additional index $i_{s+1} = n+1$ (hence $i_{s+1}-1 = n$) and set $q_{n+1} = 1$ and $g(q_{n+1}) = 1$, so~\eqref{eq:first-layer-of-stair-last-step} is nothing but~\eqref{eq:first-layer-of-stair-right-part} for~$j = s$.

\begin{claimbbb}\label{claim-about-dominating-interval}
    For every $j\in\{0,\dots,s\}$ the following property holds true:
\end{claimbbb}
\begin{equation}\label{eq:dominating-interval}
    \forall k< i_{j+1} \bigg(\intindexone{k}{i_j} = \emptyset \makebox[3em]{and}\forall m>i_j \big( \intindexone{k}{m} \subseteq \intindexone{i_j}{m}\big)\bigg).
\end{equation}

\begin{proof}
We fix an $j\in\{0,\dots,s\}$ and proof the claim statement by case distinction for $k$.
\begin{itemize}
    \item
    In case $k = i_h$ for some $h<j$, it holds that
    \begin{equation}\label{eq:inequality-for-previous-k}
        \frac{g(q_{i_j})-g(q_{i_h})}{q_{i_j}-q_{i_h}} = \frac{\big(g(q_{i_j}) - g(q_{i_{j-1}})\big) + \dots + \big(g(q_{i_{h+1}}) - g(q_{i_h})\big)}{(q_{i_j} - q_{i_{j-1}}) + \dots + (q_{i_{h+1}} - q_{i_h})}\leq 2^{i+1},
    \end{equation}
    where the inequality follows from~\eqref{eq:first-layer-of-stair-left-part} applied for indices $h,\dots,j-1$ since, for all natural $l$ and all reals~${A_1,\dots,A_l\geq 0}$ and~${B_1,\dots,B_l,C>0}$, the~$l$ equalities ${\frac{A_1}{B_1}\leq C,\dots,\frac{A_l}{B_l}\leq C}$ imply together that ${\frac{A_1+\dots+A_l}{B_1+\dots+B_l}\leq C}$.
    
    Therefore, we obtain by~\eqref{eq:define-intindexone} that $\intindexone{i_h}{i_j}=\emptyset$,
    and~\eqref{eq:previous-k-annulated} implies for every $m>i_j$ that 
    \begin{equation}\label{eq:i_h-annulated}
        \intindexone{i_h}{m}\subseteq \intindexone{i_j}{m}.
    \end{equation}

    \item
    In case $k\in \{i_h+1,\dots,i_{h+1}-1\}$ for some $h<j$, it holds by choice of $i_h$ that
    \begin{equation}\label{eq:inequality-for-previous-block}
        \frac{g(q_k)-g(q_{i_h})}{q_k-q_{i_h}}> 2^{i+1}.
    \end{equation}
    
    First, we obtain that~$\intindexone{i_h}{i_j}=\emptyset$ by~\eqref{eq:define-intindexone} since
    \begin{equation}\label{eq:i_h+-annulated}
        \frac{g(q_{i_j})-g(q_k)}{q_{i_j}-q_k} = \frac{\big(g(q_{i_j})-g(q_{i_h})\big)-\big(g(q_k) - g(q_{i_h})\big)}{(q_{i_j}-q_{i_h})-(q_k - q_{i_h})} \leq 2^{i+1},
    \end{equation}
    where the inequality follows from~\eqref{eq:inequality-for-previous-k} and~\eqref{eq:inequality-for-previous-block} because, for all reals ${A_1,A_2\geq 0}$ and ${B_1,B_2,C>0}$, the two equalities ${\frac{A_1}{B_1}\leq C}$ and ${\frac{A_2}{B_2}>C}$ imply together that ${\frac{A_1-A_2}{B_1-B_2}\leq C}$.
    
    Second, we obtain for every $m>i_j$ that 
    \begin{equation}
        \intindexone{k}{m}\subseteq \intindexone{i_h}{m}\subseteq \intindexone{i_j}{m},
    \end{equation}
    where the left side is implied by~\eqref{eq:next-l-annulated} due to~\eqref{eq:i_h+-annulated}, and the right side holds by~\eqref{eq:i_h-annulated}.

    \item
    In case $k\in \{i_j,\dots,i_{j+1}-1\}$, we straightforwardly obtain from~$k\geq i_j$ that ${\intindexone{k}{i_j} = \emptyset}$.
    For $k = i_j$, the right side of~\eqref{claim-about-dominating-interval} is trivial; for $k>i_j$, we obtain from the choice of $i_j$ that
    \begin{equation}\label{eq:inequality-for-current-block}
        \frac{g(q_k)-g(q_{i_j})}{q_k-q_{i_j}}> 2^{i+1},
    \end{equation}
    and thus~\eqref{eq:next-l-annulated} implies for every~$m>i_j$ that
    \begin{equation}
        \intindexone{k}{m}\subseteq \intindexone{i_j}{m}.
    \end{equation}
\end{itemize} 
\end{proof}

\begin{claimbbb}\label{non-covered-point}
For every $j\in\{0,\dots,s\}$ and every real ${x\in (q_{i_j} + \frac{g(q_{i_{j+1}}) - g(q_{i_j})}{2^{i+1}}, q_{i_{j+1}})}$, it holds that $x\notin \intindexone{k}{l}$ for all $k$ and $l$ in the range $0,\dots,n$.
\end{claimbbb}
\begin{remark}
    Note that, in case~$j<s$, it holds by~\eqref{eq:first-layer-of-stair-left-part} that ${q_{i_j} + \frac{g(q_{i_{j+1}}) - g(q_{i_j})}{2^{i+1}}\leq q_{i_{j+1}}}$, hence the interval ${(q_{i_j} + \frac{g(q_{i_{j+1}}) - g(q_{i_j})}{2^{i+1}}, q_{i_{j+1}})}$ used in the Claim~\ref{non-covered-point} is well-defined.

    In case~$j = s$, it may occur that~$q_{i_j} + \frac{g(q_{i_{j+1}}) - g(q_{i_j})}{2^{i+1}}> q_{i_{j+1}}$, so, for every two reals~$a>b$, let~$[a,b]$ conventionally denote an empty set.
\end{remark}

\begin{proof}
Fix $j\in\{0,\dots,s\}$ and a real ${x\in (q_{i_j} + \frac{g(q_{i_{j+1}}) - g(q_{i_j})}{2^{i+1}}, q_{i_{j+1}})}$. We accomplish the proof in four consequent steps.

\begin{enumerate}
    \item\label{step-one}
    First, we note that
    $x\in (q_{i_{j+1}-1},q_{i_{j+1}})$ because, in case $i_{j+1}>i_j+1$, the inequality~\eqref{eq:first-layer-of-stair-right-part} for $m=i_{j+1}-1$ implies that
    \[q_{i_{j+1}-1} < q_{i_j} + \frac{g(q_{i_{j+1}-1}) - g(q_{i_j})}{2^{i+1}} < x.\]
    In case $i_{j+1}=i_j+1$, we obviously have $x\in(q_{i_j},q_{i_{j+1}})=(q_{i_{j+1}-1},q_{i_{j+1}})$.
    
    \item\label{step-two}
    Next, we note that ${x\notin \intindexone{i_j}{i_{j+1}-1}}$ since, in case $i_{j+1} = i_j+1$, we have ${\intindexone{i_j}{i_{j+1}-1}=\intindexone{i_j}{i_j}=\emptyset}$, and otherwise,
    \[x\in (q_{i_j} + \frac{g(q_{i_{j+1}}) - g(q_{i_j})}{2^{i+1}}, q_{i_{j+1}}) = (q_{i_j},q_{i_{j+1}})\setminus \intindexone{i_j}{i_{j+1}-1}.\]

    \item\label{step-three}
    Further, we show that $x\notin \intindexone{k}{i_{j+1}-1}$ for all $k$.
    For all ${k\geq i_{j+1}-1}$, this is obvious since ${\intindexone{k}{i_{j+1}-1}=\emptyset}$ by definition, so, in what follows, assume that ${k<i_{j+1}-1}$.
    
    In case ${i_{j+1} = i_j+1}$, the left side of~\eqref{eq:dominating-interval} yields ${\intindexone{k}{i_{j+1} - 1} = \intindexone{k}{i_j} = \emptyset}$.
    In case ${i_{j+1}>i_j+1}$, the converse would imply by the right side of~\eqref{eq:dominating-interval} for ${m = i_{j+1}-1>i_j}$ that ${x\in \intindexone{k}{i_{j+1}-1}\subseteq \intindexone{i_j}{i_{j+1}-1}}$, but that is impossible by Step~\ref{step-two}.
    
    \item\label{step-four}
    Finally, we show that $x\notin \intindexone{k}{l}$ for every $k$ and $l$ by contradiction.
    Suppose that $x\in \intindexone{k}{l}$ for some $k$ and $l$. Since $x\in(q_{i_{j+1}-1},q_{i_{j+1}})$ by Step~\ref{step-one}, we can apply Claim~\ref{only-neighbor-matters} for~$x$ and obtain that ${x\in \intindexone{k}{l}\subseteq \intindexone{k}{i_{j+1}-1}}$, but this is impossible by Step~\ref{step-three}.
\end{enumerate}
\end{proof}

\subsection*{The proof of Claim~\ref{claim:bounded}}

Then, due to 
\[Y_i^n = \bigg( \bigcup_{k,l\in\{0,\dots,n\}}\intindexone{k}{l} \bigg) \cap [0,1],\]
Claim~\ref{non-covered-point}
implies that 
\[(q_{i_j} + \frac{g(q_{i_{j+1}}) - g(q_{i_j})}{2^{i+1}}, q_{i_{j+1}})\cap Y_i^n = \emptyset \makebox[7em]{for every}j\in\{0,\dots,s\}.\]

Hence we obtain that
\begin{equation}\label{eq:step-of-stair-bounded}
    Y_i^n\cap (q_{i_j},q_{i_{j+1}}) \subseteq [q_{i_j},q_{i_j} + \frac{g(q_{i_{j+1}})-g(q_{i_j})}{2^{i+1}}]
\end{equation}
with the measure
\begin{equation}\label{eq:measure-of-step-of-stair}
    \mu\big(Y_i^n\cap (q_{i_j},q_{i_{j+1}})\big) \leq \frac{g(q_{i_{j+1}})-g(q_{i_j})}{2^{i+1}}.
\end{equation}

Therefore, by 
\[Y_i^n\setminus\{q_{i_0},q_{i_1},\dots,q_{i_{s+1}}\} = \big(Y_i^n\cap(q_{i_0},q_{i_1})\big)\cupdisjoint\dots\cupdisjoint\big(Y_i^n\cap(q_{i_s},q_{i_{s+1}})\big),\]
where $A\cupdisjoint B$ denotes the union of two disjoint intervals $A$ and $B$, we obtain an upper bound for the measure of $Y_i^n$:
\begin{equation*}
    \mu(Y_i^n) = \sum_{j=0}^s \mu\big(Y_i^n\cup (q_{i_j},q_{i_{j+1}})\big)
    \leq
    \sum_{j=0}^s \frac{g(q_{i_{j+1}})-g(q_{i_j})}{2^{i+1}} = \frac{g(q_{i_{s+1}}) - g(q_{i_0})}{2^{i+1}} \leq \frac{1}{2^{i+1}}.
\end{equation*}
Here, the first inequality follows from by~\eqref{eq:measure-of-step-of-stair} applied for all $j$ from $0$ to $s$, and the second one is implied by $g(q_0)\geq 0$ and $g(q_{i_{s+1}}) = g(q_{n+1}) = 1$.

\subsection*{The proof of Claim~\ref{claim:overlapping}}
Let $n,i\geq 0$.

The finite test $T_i^n$ is a subset of the finite test $T_i^{n+1}$ since every intersection of an interval $\intindexone{k}{m}$ where $0\leq k < m\leq n$ with~$[0,1)$ added into the test $T_i^n$ will also be added into the test $T_i^{n+1}$ as well. Hence we directly obtain that
\[Y_i^n = \bigcup\limits_{I\in T_i^n} I \subseteq \bigcup\limits_{I\in T_i^{n+1}} I = Y_i^{n+1}.\]

\subsection*{The proof of Claim~\ref{claim:beta-not-in-test-implies-reducibility}}

Fix an index $i$ such that $\beta \notin Y_i$. By Claim~\ref{claim:overlapping}, it means inter alia that $\beta\notin Y_i^n$ for every natural $n$. 

We aim to show that $\alpha\redsolovay\beta$ via $g$ with the Solovay constant $c = 2^{-i}$ by contradiction: fixing a rational $q\in LC(\beta)$ such that
\begin{equation}\label{eq:reducibility-with-2-i+1}
    \alpha - g(q) \geq 2^{-i}(\beta - q) > 2^{-(i+1)}(\beta - q),
\end{equation}
we can, by $\mathrm{dom}(g)\supseteq LC(\beta)$, fix an index $K$ such that $q = p_K$.
We know by definition of translation function that ${\lim\limits_{p\nearrow\beta}\big(g(p) - g(p_K)\big) = \alpha - g(p_K)}$, hence there exists $\epsilon>0$ such that
\begin{equation}\label{eq:p_K-is-enough}
    g(p) - g(p_K) > 2^{-(i+1)}(\beta - p_K)\makebox[5em]{for all}p\in (\beta - \epsilon, \beta).
\end{equation}

Fix an index $M>K$ such that $p_M\in (\beta-\epsilon,\beta)$. Note that~\eqref{eq:p_K-is-enough} implies in particular that $g(p_M)-g(p_K)>0$, hence $p_K<p_M$ because the function $g$ is nondecreasing.

Let~$\{q_0 <  \cdots < q_M\}$ be the set $\{p_0,\dots,p_M\}$ sorted increasingly, and let ${k,m\in\{0,\dots,M\}}$ denote two indices such that $q_k = p_K$ and $q_m = p_M$.
In particular, we have ${q_k=p_K<p_M=q_m}$, hence ${k<m}$.

To obtain a contradiction with $\beta\notin Y_i^M$ and conclude the proof of Claim~\ref{claim:beta-not-in-test-implies-reducibility}, and thus also of~\eqref{eq:bounded}, it suffices to show that $\beta$ lies within one of the intervals of the finite test $T_i^M$, namely, in ${\intindexone{k}{m}\cap [0,1)}$.

Indeed,~$\beta\in[0,1)$ holds obviously, and~$\beta\in \intindexone{k}{m}$ holds by~\eqref{eq:define-intindexone} since
\begin{equation}
    q_m<\beta<q_k+\frac{g(q_m) - g(q_k)}{2^{i+1}},
\end{equation}
where the right inequality is implied by~\eqref{eq:p_K-is-enough} for $p=q_m$.

\medskip

\subsection{The left limit exists}\label{proof:exists}

In this section, we show that, for $q$ converging to $\beta$ from below, the fraction~$\frac{\alpha - g(q)}{\beta - q}$ converges, i.e., that
\begin{equation}\label{eq:blp-generalized}
\exists\lim_{q\nearrow \beta}\frac{\alpha - g(q)}{\beta - q},
\end{equation}
by contradiction.
For all~$q\in LC(\beta)$, the fraction $\frac{\alpha - g(q)}{\beta - g(q)}$ is obviously positive and, by the previous section, bounded, consequently, supposing that the left limit in~\eqref{eq:blp-generalized} does not exist, we can fix two rational constants~$c$ and~$d$ where
\begin{equation}\label{eq:the-choice-of-c-and-d}
c<d, \quad d-c <1, \makebox[4em]{ and } \liminf\limits_{q\nearrow \beta}\frac{\alpha - g(q)}{\beta - q}<c<d<\limsup\limits_{q\nearrow \beta}\frac{\alpha - g(q)}{\beta - q}
\end{equation}
and the rational
\begin{equation}
    e = d-c>0.
\end{equation}

For a given finite subset~$Q$ of the domain of~$g$, we will construct a finite test~$\miller{Q}$ by an extension of a construction used by Miller~\cite[Lemma 1.2]{Miller-2017} in the left-c.e.\ case. The construction is effective in the sense that it always terminates and yields the test~$\miller{Q}$ in case it is applied to a finite subset of the domain of~$g$. 

Further, for every finite subset~$Q$ of the domain of~$g$ and every rational~$p$, we let
\[
\coverset{Q}{p} = \covertest{\miller{Q}}{p} \makebox[6em]{ and } \coverall{Q}{p} = \max_{H \subseteq Q}\coverset{H}{p}.
\] 

The desired contradiction can be obtained from the following three claims.

\begin{claimbbb}\label{infinite-covering}
Let~$Q_0 \subseteq  Q_1 \subseteq \cdots$ be a sequence of finite sets that converges to the domain of~$g$. Then it holds that
\[
\lim\limits_{n\to\infty}\coverall{Q_n}{\ee{\beta}} = \infty.
\]
\end{claimbbb}

\begin{claimbbb}\label{claim:upper-bound-for-single-stage}
For every finite subset~$Q$ of the domain of~$g$, it holds that
\begin{equation}\label{eq:upper-bound-for-single-stage}
\int\limits _0^1 \coverset{Q}{x}dx 
= \mu\big(\miller{Q}\big)  
\leq g(\max Q) - g(\min Q).
\end{equation}
\end{claimbbb}

\begin{claimbbb}\label{general-bound-for-coverall}
For every finite subset~$Q$ of the domain of~$g$ and for every nonrational real $p$ in~$[0,\eesymb]$, it holds that 
\begin{equation}\label{eq:general-bound-for-coverall}
\coverall{Q}{p}\leq \coverset{Q}{p}+1.
\end{equation}
\end{claimbbb}

Remind that~$p_0, p_1, \dots$ is an effective enumeration without repetition of the domain of~$g$ and~$Q_n = \{p_0, \dots, p_n\}$ for~$n=0,1, \dots$ .
We consider a special type of step function with domain~$[0,1]$ that is given by a partition of the unit interval into finitely many intervals with rational endpoints such that the function is constant on the corresponding open intervals but may have arbitrary values at the endpoints. For the scope of this proof, a designated interval of such a step function is an interval that is the closure of a maximum contiguous open interval on which the function attains the same value. I.e., the designated intervals form a partition of the unit interval except that two designated intervals may share an endpoint. Observe that, for every finite subset~$H$ of the domain of~$g$, the corresponding cover function~$\coverset{H}{\cdot}$ is such a step function with values in the natural numbers, and the same holds for the function~$\coverallfun{Q_n}$ since~$Q_n$ has only finitely many subsets. Furthermore, for given~$n$, the designated intervals of the function~$\coverall{Q_n}{\cdot}$ together with the endpoints and function value of every interval are given uniformly effective in~$n$ because~$g$ is computable, and the construction of~$M(Q_n)$ is uniformly effective in~$n$. 

For all natural numbers~$i$ and~$n$, consider the step function~$\coverallfun{Q_n}$ and its designated intervals. For every such interval, call its intersection with~$[0, \eesymb]$ its restricted interval. Let~$X^n_i$ be the union of all restricted designated intervals where on the corresponding designated interval the function~$\coverallfun{Q_n}$  attains a value that is strictly larger than~$2^{i+2}$. Let~$X_i$ be the union of the sets~$X^0_i,  X^1_i,  \dots$ . 

By our assumption that the values of~$g$ are in~$[0,1)$ and by~\eqref{eq:upper-bound-for-single-stage}, for all~$n$, the integral of~$\coverset{Q_n}{p}$ from~$0$ to~$1$ is at most~$1$, hence by~\eqref{eq:general-bound-for-coverall}, the integral of~$\coverall{Q_n}{p}$ from~$0$ to~$\eesymb$ is at most~$2$. Consequently, each set~$X^n_i$ has Lebesgue measure of at most~$2^{-(i+1)}$. The latter upper bound then also holds for the Lebesgue measure of the set~$X_i$ for every $i$ since, by the maximization in the definition of~$\coverallfun{Q_n}$ and 
\[
Q_0 \subseteq Q_1 \subseteq \cdots ,
\makebox[5em]{ we have }
\coverallfun{Q_0} < \coverallfun{Q_1} < \cdots \;,  
\makebox[5em]{ hence }
X^0_i \subseteq X^1_i \subseteq \cdots .
\]
By construction, for all~$i$ and~$n>0$, the difference~$X^n_i \setminus X^{n-1}_i$ is equal to the union of  finitely many intervals that are mutually disjoint except possibly for their endpoints, and a list of these intervals is uniformly computable in~$i$ and~$n$ since the functions~$\coverallfun{Q_n}$ are uniformly computable in~$n$.  Accordingly, the set~$X_i$ is equal to the union of a set~$U_i$ of intervals with rational endpoints that is effectively enumerable in~$i$ and where the sum of the measures of these intervals is at most~$2^{-(i+1)}$. By the two latter properties, the sequence~$U_0, U_1, \dots$ is a Martin-L\"{o}f test. By Claim~\ref{infinite-covering}, the values~$\coverall{Q_n}{\ee{\beta}}$ tend to infinity where~$\ee{\beta}< \eesymb$, hence for all~$n$, the Martin-L\"{o}f random real~$\ee{\beta}$  is contained in some interval in~$U_n$, a contradiction. This concludes the proof that Claim~\ref{infinite-covering} through~\ref{general-bound-for-coverall} together imply that the left limit~\eqref{eq:blp-generalized} exists.

\medskip

It remains to construct the finite Test $\miller{Q}$ for a given finite subset $Q$ of the domain of $g$ and check that Claims~\ref{infinite-covering} through~\ref{general-bound-for-coverall} are fulfilled.

\subsection*{The intervals that are used}
First, we define two partial computable functions~$\funsymbc$ and~$\funsymbd$ that have the same domain as~$g$:
\[
\funsymbc(q)=g(q) - cq \makebox[7em]{ and }  \funsymbd(q)=g(q) - dq.
\]
Due to of~$c < d$, the following claim is immediate.

\begin{claimbbb}\label{claim:properties-gamma-delta}
Whenever~$g(q)$ is defined, we have
\[
\funsymbc(q) - \funsymbd(q) = (d-c)q = \ee{q} >0, \makebox[7em]{ hence } \funsymbc(q) >  \funsymbd(q).
\]
In particular, the partial function~$\funsymbc - \funsymbd$ is strictly increasing on its domain, hence, for every sequence~$q_0 < q_1 < \dots$ of rationals on $[0,\beta)$ that converges to~$\beta$, the values~$g(q_i)$ are defined, and therefore, the values $\funsymbc(q_i) - \funsymbd(q_i)$ converge strictly increasingly to~$(d-c)\beta$.
\end{claimbbb}

Now, for given rationals~$p$ and~$q$, we define the interval
\[
\intrational{p}{q}
= [\funsymbc(p)-\funsymbd(p),\funsymbc(q)-\funsymbd(p)].
\]
From this definition and the definitions of~$\gamma$ and~$\delta$, the following claim is immediate. Note that assertion~(iii) in the claim relates to expanding an interval at the right endpoint.
\begin{claimbbb}\label{claim:properties-of-intervals}
\begin{itemize}
\item[(i)]
Any interval of the form~$\intrational{p}{q}$ has the left endpoint~$\ee{p}$.
\item[(ii)]
Consider an interval of the form~$\intrational{p}{q}$. In case~$\funsymbc(p) \le \funsymbc(q)$, the interval has length~$\funsymbc(q)- \funsymbc(p)$, otherwise, the interval is empty. In particular, any interval of the form~$\intrational{p}{p}$ has length~$0$. 
\item[(iii)]
Let~$\intrational{p}{q}$ be a nonempty interval, and assume~$\funsymbc(q) \le \funsymbc(q^{\prime})$. Then the interval~$\intrational{p}{q}$ is a subset of the interval~$\intrational{p}{q^{\prime}}$, both intervals have the same left endpoint $\ee{p}$, and they differ in length by~$\funsymbc(q^{\prime}) - \funsymbc(q)$.
\end{itemize}
\end{claimbbb}

By the choice~\eqref{eq:the-choice-of-c-and-d} of $c$ and $d$, the real~$\beta$ is an accumulation point of both the sets
\begin{eqnarray*}
S &=& \{q < \beta \colon \frac{\alpha - g(q)}{\beta - q} > d \} = \{q < \beta \colon 
\funsymbd(q) < \alpha - d\beta \},\\
T &=& \{q < \beta \colon \frac{\alpha - g(q)}{\beta - q} < c \} \; =  \{q < \beta \colon 
\funsymbc(q) > \alpha - c\beta \}.
\end{eqnarray*}

The two following claims, which have already been used in the left-c.e.\ case~\cite{Barmpalias-Lewispye-2017, Miller-2017}, will be crucial in the proof of Claim~\ref{infinite-covering}.

\begin{claimbbb}\label{claim:set-s-and-t-are-disjoint}
The sets~$S$ and~$T$ are disjoint.
\end{claimbbb}
\begin{proof}
The claim holds because for every~$q < \beta$, the bounds in the definitions of~$S$ and~$T$ are strictly farther apart than the values~$\funsymbc(q)$ and~$\funsymbd(q)$, i.e., we have
\[
\funsymbc(q) - \funsymbd(q) = (d-c) q < (d-c) \beta = (\alpha - c\beta) - (\alpha - d\beta).
\] 
\end{proof}

\begin{claimbbb}\label{claim:intervals-with-endpoints-in-s-and-t}
Let~$q$ be in~$S$, and let~$q^{\prime}$ be in~$T$. Then the  interval~$\intrational{q}{q^{\prime}}$ contains~$\ee{\beta}$.
\end{claimbbb}
\begin{proof}
By definition, the interval~$\intrational{q}{q^{\prime}}$ has the left endpoint~$\ee{q}$ and the right endpoint~$\funsymbc(q^{\prime})-\funsymbd(q)$.  By definition of the sets~$S$ and~$T$, on the one hand, we have~$q < \beta$, hence~$\ee{q}<\ee{\beta}$, on the other hand, we have
\[
\funsymbc(q^{\prime})-\funsymbd(q) > (\alpha - c\beta) - (\alpha - d\beta) = (d-c)\beta = \ee{p}.
\]
\end{proof}

\subsection*{Outline of the construction of the finite test~$\miller{Q}$} 
Let~${Q=\{q_0 <  \cdots < q_n\}}$ be a nonempty finite subset of the domain of~$g$, where the notation used to describe~$Q$ has its obvious meaning, i.e., $Q$ is the set of~$q_0, \dots, q_n$, and~$q_i < q_{i+1}$ for all~$i$. Note that~--- in contrast to Section~\ref{proof:bounded}~--- $q_0,\dots,q_n$ don't need to be the \textit{first} $n+1$ elements of the effective enumeration of the domain of $g$. We describe the construction of the finite test~$\miller{Q}$, which is an extended version of a construction used by Miller~\cite[Lemma 1.2]{Miller-2017} in connection with left-c.e.\ reals. Using the notation defined in the previous paragraphs, for all~$i$ in~$\{0, \dots, n\}$, let
\begin{align*}
\fund{i} &= \funsymbd(q_i) = g(q_i) - d q_i,
\\
\func{i} &= \funsymbc(q_i) = g(q_i) - cq_i,
\\
\intindextwo{i}{j}
&= \intrational{q_i}{q_j}
= [\funsymbc(q_i) - \funsymbd(q_i), \funsymbc(q_j) - \funsymbd(q_i)]
= [\ee{q_i}, \func{j} - \fund{i}].
\end{align*}

The properties of the intervals of the form~${\intrational{p}{q}}$ extend to the intervals~${\intindextwo{i}{j}}$: for example, any two nonempty intervals of the form~${\intindextwo{i}{j}}$ and~${\intindextwo{i}{j^{\prime}}}$ have the same left endpoint, i.e., ${\min \intindextwo{i}{j}}$ and~${\min \intindextwo{i}{j'}}$ are the same for all~$i$, $j$, and~$j'$.

\medskip

The test~$\miller{Q}$ is constructed in successive steps~$j=0, 1, \dots, n$, where, at each step~$j$, intervals~$\intu{0}{j}, \dots, \intu{n}{j}$ are defined. Every such interval~$\intu{i}{j}$ has the form
\[
\intu{i}{j} = \intindextwo{i}{\rightendofu{i}{j}} = \intindextwo{i}{k} = \intrational{q_i}{q_k} =  [\funsymbc(q_i) - \funsymbd(q_i), \funsymbc(q_k) - \funsymbd(q_i)]
\] 
for some index~$k\in \{0, \dots, n\}$, where~$\rightendofu{\cdot}{j}$ is an index-valued function that maps every index $i$ to such index $k$ that $\intindextwo{i}{k} = \intu{i}{j}$.  

At step~$0$, for~$i=0, \dots, n$, we set the values of the function $\rightendofu{i}{0}$ by
\begin{equation}
    \rightendofu{i}{0} = i
\end{equation}
and initialize the intervals~$\intu{i}{0}$ as zero-length intervals
\begin{equation}\label{eq:step-zero}
    \intu{i}{0} = \intindextwo{i}{\rightendofu{i}{0}} = \intinit{i} = \intrational{q_i}{q_i} = [\ee{q_i}, \ee{q_i}].
\end{equation}

In the subsequent steps, every change of an interval amounts to an expansion at the right end in the sense that, for all indices~$i$, the intervals~$\intu{i}{0}, \dots, \intu{i}{n}$ share the same left endpoint, while their right endpoints are nondecreasing. More precisely, as we will see later, for~$i=0, \dots, n$, we have
\begin{align*}
\ee{q_i} &= \min \intu{i}{0} =  \cdots = \min \intu{i}{n},\\
\ee{p} &= \max \intu{i}{0} \le  \cdots \le \max \intu{i}{n}, 
\end{align*}
and thus~$\intu{i}{0} \subseteq \cdots  \subseteq \intu{i}{n}$. After concluding step~$n$, we define the finite test
\[\miller{Q} =  (\intu{0}{n}, \dots, \intu{n}{n}).\]

In case the right endpoints of two intervals of the form~$\intu{i}{j-1}$ and~$\intu{i}{j}$ coincide, we say that the interval with index~$i$ remains unchanged at step~$j$. Similarly, we will speak informally of the interval with index~$i$, or~$\intu{i}{}$, for short, in order to refer to the sequence~$\intu{i}{0}, \dots, \intu{i}{n}$ in the sense of one interval that is successively expanded. 

Due to technical reasons, for an empty set $\emptyset$, we define $\miller{\emptyset} = \emptyset$.

\subsection*{A single step of the construction and the index stair}
During step~${j>0}$, we proceed as follows. Let $t_0$ be the largest index among $\{0,\dots,j-1\}$ such that $\func{t_0} > \func{j}$, i.e., let
\begin{equation}\label{eq:define-u}
    t_0 = \argmax\{q_z: z < j \text{ and } \func{z} > \func{j}\}
\end{equation}
in case such index exists and $t_0=-1$ otherwise.

Next, define indices~$s_1, t_1, s_2, t_2, \dots$ inductively as follows. For~$h= 1, 2, \dots$, assuming that~$t_{h-1}$ is already defined, where~$t_{h-1}<j-1$, let
\begin{eqnarray}
    \label{eq:define-s-h}
    s_h &=& \max \; \argmin\{\fund{x}: t_{h-1} < x \leq j-1\},
    \\
    \label{eq:define-t-h}
    t_h &=& \max \; \argmax\{\func{y}: s_h \leq y \leq j-1\}.
\end{eqnarray}
That is, the operator~$\argmin$ yields a set of indices~$x$ such that~$\fund{x}$ is minimum among all considered values, and~$s_h$ is chosen as the largest index in this set, and similarly for~$\argmax$ and the choice of~$t_h$.

Since we assume that~$t_{h-1}<j-1$, the minimization in~\eqref{eq:define-s-h} is over a nonempty set of indices, hence~$s_h$ is defined and satisfies~$s_h\leq j-1$ by definition. Therefore, the maximization in~\eqref{eq:define-t-h} is over a nonempty index set, hence also~$t_h$ is defined.

The inductive definition terminates as soon as we encounter an index~$l\ge 0$ such that~$t_l = j-1$, which will eventually be the case by the previous discussion and because, obviously, the values~$t_0, t_1, \dots$ are strictly increasing. For this index~$l$, we refer to the finite sequence~$(t_0 ,s_1, t_1, \dots, s_l, t_l)$ (or, for short, $(t_0,s_1,t_1,\dots)$ in case  the value of $l$ is not important) as the \defhigh{index stair of step} $j$. E.g., in case~$l=1$, the index stair is~$(t_0, s_1, t_1)$, and in case~$l=0$, the index stair is~$(t_0)$. Note that~$l=0$ holds if and only if even~$s_1$ could not be defined, where the latter in turn holds if and only if~$t_0$ is equal to~$j-1$.

Next, for~$i=1, \dots, n$, we set the values of $\rightendofu{i}{j}$ and define the intervals~$\intu{i}{j}$. For a start, in case $l\ge 1$, let
\begin{eqnarray}
    \label{eq: step-definition-non-terminal:right-end}
    \rightendofu{s_1}{j} &=& {j},
    \\
    \label{eq: step-definition-non-terminal}
    \intu{s_1}{j} &=& \intindextwo{s_1}{\rightendofu{s_1}{j}} = \intindextwo{s_1}{j} =  [\func{s_1}- \fund{s_1}, \func{j} - \fund{s_1}],
\end{eqnarray}
and call this a \defhigh{nonterminal expansion} of the interval~$\intu{s_1}{}$ \defhigh{at step} $j$.
In case $l \ge 2$, in addition, let for~$h=2, \dots, l$  
\begin{eqnarray}
    \label{eq: step-definition-terminal:right-end}
    \rightendofu{s_h}{j} &=& {t_{h-1}},
    \\
    \label{eq:step-definition-terminal}
    \intu{s_h}{j} &=& \intindextwo{s_h}{\rightendofu{s_h}{j}} = \intindextwo{s_h}{t_{h-1}} =  [\func{s_h} - \fund{s_h}, \func{t_{h-1}} - \fund{s_h}],
\end{eqnarray}
and call this a \defhigh{terminal expansion} of the interval~$\intu{s_h}{}$ \defhigh{at step} $j$.

For all remaining indices, the interval with index~$i$ \defhigh{remains unchanged at step}~$j$, i.e., for all $i\in\{0, \dots, n\} \setminus \{s_1, \dots, s_l\}$, let
\begin{eqnarray}
    \label{eq:unchanged:right-end}
    \rightendofu{i}{j} &=& \rightendofu{i}{j-1},
    \\
    \label{eq:unchanged}
    \intu{i}{j} &=& \intu{i}{j-1}.
\end{eqnarray}

The choice of the term \say{terminal expansion} is motivated by the fact that, in case a terminal expansion occurs for the interval with index~$i$ at step $j$, then, at all further steps $j+1,\dots,n$, the interval remains unchanged, as we will see later.

We conclude step~$j$ by defining for~$i=0,\dots,n$ the half-open interval
\begin{equation}
\label{eq:definition-of-V}
\intv{i}{j} = \intu{i}{j} \setminus  \intu{i}{j-1}.
\end{equation}
That is, during step~$j$, the interval with index~$i$ is expanded by adding at its right end the half-open interval~$\intv{i}{j}$, i.e., we have
\begin{equation}\label{eq:basic-properties-of-V}
U^{j}_i = U^{j-1}_i \cupdisjoint V^j_i \makebox[6em]{ where }
|U^{j}_i| = |U^{j-1}_i| + |V^j_i| . 
\end{equation}
This includes the degenerated case where the interval with index~$i$ is not changed, hence~$V^j_i$ is empty and has length~$0$.

In what follows, in connection with the construction of a test of the form~$\miller{Q}$, when appropriate, we will occasionally write~$t_0^j$ for the value of~$t_0$ chosen during step~$j$ and similarly for other values like~$s_h$ in order to distinguish the values chosen during different steps of the construction. 

\subsection*{The proof of Claim~\ref{infinite-covering}}
Now, as the construction of the tests of the form~$\miller{Q}$ has been specified,  we can already demonstrate Claim~\ref{infinite-covering}. Let $Q_0 \subseteq Q_1 \subseteq \dots$ be a sequence of sets that converges to the domain of~$g$ as in the assumption of the claim. Any finite subset~$H$ of the domain of~$g$  will be a subset of~$Q_n$ for all sufficiently large indices~$n$, where then, for all such~$n$, it holds that~$\coverset{H}{\ee{\beta}} \le \coverallfun{Q_n}(\ee{\beta})$ by definition of~$\coverallfun{Q_n}$. Consequently, in order to show Claim~\ref{infinite-covering}, i.e., that the values $\coverall{Q_n}{\ee{\beta}}$ tend to infinity, it  suffices to show that the function~$H \mapsto \coversetfun{H}(\ee{\beta})$ is unbounded on the finite subsets~$H$ of the domain of~$g$.  

Recall that we have defined subsets~$S$ and~$T$ of the domain of~$g$, which contain only rationals~$q < \beta$. Let~$r_0 < r_1 < \dots$ be a sequence such that, for all indices~$i \ge 0$, it holds that
\begin{equation}\label{eq:proof-infinite-covering}
 r_{2i} \in T, \qquad\qquad  
 r_{2i+1} \in S, \qquad\qquad 
\funsymbc(r_{2i+1}) < \funsymbc(r_{2i+2}) <  \funsymbc(r_{2i}).   
\end{equation}

Such a sequence can be obtained by the following nonconstructive inductive definition.  Let~$r_0$ be an arbitrary number in~$T$. Assuming that~$r_{2i}$ has already been defined, let~$r_{2i+1}$ be equal to some~$r$ in~$S$ that is strictly larger than~$r_{2i}$. Note that such~$r$ exists since~$r_{2i} < \beta$, and~$\beta$ is an accumulation point of~$S$. Furthermore, assuming that~$r_{2i}$ and~$r_{2i+1}$ have already been defined, let~$r_{2i+2}$ be equal to some~$r$ in~$T$ that is strictly larger than~$r_{2i+1}$ and such that the second inequality in~\eqref{eq:proof-infinite-covering} holds. Note that such~$r$ exists because, by definition of~$T$, we have~$  \funsymbc(r_{2i}) > \alpha - c \beta$, while~$\beta$ is also an accumulation point of~$T$, and~$\funsymbc(r)$ converges to~$\alpha - c \beta$ when~$r$ tends nondecreasingly to~$\beta$. Finally, observe that the first inequality in~\eqref{eq:proof-infinite-covering} holds automatically for~$r_{2i+1}$ in~$S$ and~$r_{2i+2}$ in~$T$ because, by Claim~\ref{claim:set-s-and-t-are-disjoint}, the set~$S$ is disjoint from~$T$, hence, by definition of~$T$, we have   
\[\funsymbc(r_{2i+1}) \le \alpha - c \beta < \funsymbc(r_{2i+2}).
\]
Now, let~$H$ be equal to~$\{r_0, r_1, \dots, r_{2k}\}$, and consider the construction of~$M(H)$. For the remainder of this proof, we will use the indices of the~$r_j$ in the same way as the indices of the~$q_j$ are used in the description of the construction above. For example, for~$i=0, \dots, k-1$, during step~$2i+2$ of the construction of~$M(H)$, the index~$t_0$ is chosen as the maximum index~$z$ in the range~$0, \dots, 2i+1$ such that~$\funsymbc(r_{2i+2}) < \funsymbc(r_{z})$. By~\eqref{eq:proof-infinite-covering}, this means that, in step~$2i+2$, the index~$t_0$ is set equal to~${2i}$ and --- since~$2i+1$ is the unique index strictly between~$2i$ and~$2i+2$ --- the index stair of this step is~$({2i}, {2i+1}, {2i+1})$.   Accordingly, by construction, the interval~$\intu{2i+1}{2i+2}$ coincides with the interval~$\intrational{r_{2i+1}}{r_{2i+2}}$. By Claim~\ref{claim:intervals-with-endpoints-in-s-and-t}, this interval, and thus also its superset~$\intu{2i+1}{2k}$, contains~$\ee{\beta}$. The latter holds for all~$k$ different values of~$i$, hence~$\coverset{H}{\ee{\beta}} \ge k$. This concludes the proof of Claim~\ref{infinite-covering} since~$k$ can be chosen arbitrarily large.

\subsection*{Some properties of the intervals~$\intu{j}{i}$} We gather some basic properties of the points and intervals that are used in the construction.

\begin{claimbbb}\label{claim:p-q-sequence}
Let~$Q= \{q_0 <  \cdots < q_n\}$ be a subset of the domain of~$g$. Consider some step~$j$ of the construction of~$\miller{Q}$, and let~$(t_0, s_1,t_1,\dots,s_l,t_l)$ be the corresponding index stair. Then we have~$\func{j} < \func{t_0}$ in case~$t_0\neq -1$.

In case the index~$s_1$ could not be defined, i.e., in case~$l=0$, we have~$t_0=j-1$. Otherwise, i.e., in case~$l>0$, we have
\begin{align}
    \label{eq:stair-positions}
    &t_0 < s_1 < t_1 < \cdots < s_l \leq  t_l = j-1 < j,
    \\
    \label{eq:stair-values}
    &\fund{s_1} < \cdots < \fund{s_l} < \func{t_l} < \cdots < \func{t_1}\le \func{j}.
\end{align}

\end{claimbbb}

\begin{proof}
The assertion on the relative size of~$\func{j}$ and~$\func{t_0}$ is immediate by definition of~$t_0$. In case~$s_1$ cannot be defined, the range between~$t_0$ and~$j$ must be empty, and~$t_0=j-1$ follows. Next, we assume~$l>0$ and demonstrate~\eqref{eq:stair-positions} and~\eqref{eq:stair-values}. By definition of the values~$s_h$ and~$t_h$, it is immediate that we have~$s_h \le t_h < s_{h+1}$ for all~$h\in\{1,\dots,l-1\}$ and have~$s_l \le t_l = j-1$. In order to complete the proof of~\eqref{eq:stair-positions}, assume~$s_h = t_h$ for some~$h$. Then we have 
\begin{equation}\label{eq:earlier-coincidence}
    \ee{q_{s_h}} = \func{s_h} - \fund{s_h}= \func{t_h} - \fund{s_h} \ge \func{j-1} - \fund{j-1} = \ee{q_{j-1}},
\end{equation}
where the inequality holds true because~$\func{t_h} \ge \func{j-1}$ and~$\fund{s_h} \le \fund{j-1}$ hold for all~$h$. So we obtain~$s_h = t_h = j-1$, and thus~$h = l$ because, otherwise, i.e., in case~$s_h < j-1$, we would have~$q_{s_h}< q_{j-1}$.

By definition of~$s_1$ and~$l$, it is immediate that, in case~$l=0$, we have~$t_0=j-1$.

It remains to show~\eqref{eq:stair-values} in case~$l >0$. The inequality~$\func{t_1} \le \func{j}$ holds because its negation would contradict the choice of $t_0$ in the range~$0, \dots, j-1$ as largest index with maximum $\funsymbc$-value, as we have~$t_0 < t_1 < j$ by~\eqref{eq:stair-positions}. In order to show~$\fund{s_l} < \func{t_l}$, it suffices to observe that we have~$\fund{s_l} \le \fund{t_l}$ by choice of~$s_l$ and~$s_l \le t_l < j$ and know that~$\fund{t_l} < \func{t_l}$ from Claim~\ref{claim:properties-gamma-delta}. In order to show the remaining strict inequalities, fix~$h$ in~$\{1, \dots, l-1\}$. By choice of~$s_h$, we have~$\fund{s_h} < \fund{x}$ for all~$x$ that fulfill~$s_h < x \leq j-1$, and since~$s_{h+1}$ is among these~$x$, it holds~$\delta_{s_h} < \delta_{s_{h+1}}$. By a similar argument, it follows that~$\func{t_{h+1}} < \func{t_{h}}$. 
\end{proof}

\begin{claimbbb}\label{claim:before}
Let~$Q= \{q_0 <  \cdots < q_n\}$ be a subset of the domain of~$g$, and consider the construction of~$\miller{Q}$. Let~$i$ be in~$\{0, \dots, n\}$. Then it holds that
\begin{equation}\label{eq:before-before-step-i}
\intu{i}{0} = \cdots = \intu{i}{i}.
\end{equation}
Furthermore, for all steps~$j \ge  i$ of the construction, it holds that
\begin{equation}\label{eq:before-beyond-step-i}
\intu{i}{j} = \intindextwo{i}{x} \makebox[7em]{ where } x \le j.
\end{equation}
\end{claimbbb}

\begin{proof}
The equalities in~\eqref{eq:before-before-step-i} hold because the index stair of every step~${j  \le i}$ contains only indices that are strictly smaller than~$j$, and thus also than~$i$, hence, by~\eqref{eq:unchanged}, the interval with index~$i$ remains unchanged at all such steps. 

Next, we demonstrate~\eqref{eq:before-beyond-step-i} by induction over all steps~$j \ge i$. The base case ${j=i}$ follows from~\eqref{eq:before-before-step-i} and because, by definition, we have~$\intu{i}{0} = \intindextwo{i}{i}$. At the step~$j>i$, we consider its index stair~$(t_0, s_1, t_1,\dots, s_l, t_l)$. Observe that all indices that occur in the index stair are strictly smaller than~$j$. The induction step now is immediate by distinguishing the following three cases. In case~$i=s_1$, we have~$\intu{i}{j}= \intindextwo{i}{j}$. In case~$i=s_h$ for some~$h >1$, we have~$\intu{i}{j}= \intindextwo{i}{t_{h-1}}$. In case~$i$ differs from all indices of the form~$s_h$, by~\eqref{eq:unchanged}, the interval with index~$i$ remains unchanged at step~$j$, and we are done by the induction hypothesis.  
\end{proof}

\begin{claimbbb} \label{claim:previous-step}
Let~$Q= \{q_0 <  \cdots < q_n\}$ be a subset of the domain of~$g$, and consider the construction of~$\miller{Q}$. Let~$j\geq 1$ be a step of the construction where at least the index~$s_1$ could be defined, and let~$(t_0,s_1,t_1, \dots, s_l,t_l)$ be the index stair of this step. Then, for~$h=1, \dots, l$, we have
\begin{equation}\label{eq:formulas-for-previous-step}
\intu{s_{h}}{j-1} = \intindextwo{s_{h}}{t_{h}}, 
\makebox[10em]{ hence, in particular, }
\max \intu{s_{h}}{j-1} = \func{t_h} - \fund{s_h}.
\end{equation}
Consequently, for~$i=0, \dots, n$, we have
\begin{equation}
    \intu{i}{0}  \subseteq \cdots  \subseteq \intu{i}{n}, \makebox[6em]{wherein} \max U_i^0\leq \max U_i^1 \leq \dots \leq \max U_i^n.
\end{equation}
\end{claimbbb}

\begin{proof}
In order to prove the claim, fix some~$h$ in~$\{1,\dots,l\}$. In case $s_h = t_h$, by~\eqref{eq:stair-positions}, we have~$h=l$ and~$s_h = t_h = j-1$, hence~\eqref{eq:formulas-for-previous-step} holds true because, by construction and~\eqref{eq:before-before-step-i}, we have
\[
\intu{s_h}{j-1} = \intu{s_h}{s_h} = \intu{s_h}{0}=\intindextwo{s_h}{s_h}=\intindextwo{s_h}{t_h}.
\]
So we can assume the opposite, i.e., that~$s_h$ and~$t_h$ differ. We then obtain
\begin{equation}\label{eq:utj-lessthan-sj}
t_{h-1} \le t_0^{t_h} < s_h < t_h < j,
\end{equation}
where~$t_0^{t_h}$, as usual, denotes the first entry in the index stair of step~$t_h$. Here, the last two strict inequalities are immediate by~Claim~\ref{claim:p-q-sequence} since $s_h$ differs from~$t_h$. In case the first strict inequality was false, again, by Claim~\ref{claim:p-q-sequence}, we would have $s_h \le t_0^{t_h} < t_h < j$ as well as~$\func{t_h} < \func{t_0^{t_h}}$, which together contradict the choice of~$t_h$. Finally, the first inequality obviously holds in case~$h=1$ and~$t_{0}=-1$. Otherwise, 
we have~$\func{t_{h-1}} > \func{t_{h}}$ by~\eqref{eq:stair-values} as well as~$t_{h-1} < t_{h}$, hence, by definition, the value~$t_0^{t_h}$ will not be chosen strictly smaller than~$t_{h-1}$.  

By~\eqref{eq:utj-lessthan-sj}, it follows that
\[
\{x \colon t_0^{t_h} < x < t_h\} \subseteq 
\{x \colon t_{h-1} < x < j\}.
\]
By definition, the index~$s_1^{t_h}$ is chosen as the largest~$x$ in the former set that minimizes~$\fund{x}$, while~$s_h$ is chosen from the latter set by the same condition, i.e., as the largest~$x$ that minimizes~$\fund{x}$. Again, by~\eqref{eq:utj-lessthan-sj}, the index~$s_h$ is also in the former set, therefore, it must be the largest index minimizing~$\fund{x}$ there. So we have~$s_1^{t_h} = s_h$, hence~$\intu{s_h}{t_h}= \intindextwo{s_h}{t_h}$ follows from construction. 

Next, we argue that $U^{t_h}_{s_h} = U^{t_h}_{s_h}$ by demonstrating that
\[
\intu{s_h}{t_h} = \intu{s_h}{t_h+1} = \dots = \intu{s_h}{j-1},\]
i.e., that at all steps~$y=t_h+1, \dots, j-1$, the interval~$\intu{s_h}{}$ remains  unchanged. For every such step~$y$, by definition of~$t_{h}$, we have~$\func{y} < \func{t_{h}}$, hence~$s_h < t_h \le t_0^y$ by choice of~$t_0^y$. Consequently, the index~$s_h$ does not occur in the index stair of step~$y$, and we are done by~\eqref{eq:unchanged}.

We conclude the proof of the claim by showing for~$i=1, \dots, n$ the inequality
\[\max \intu{i}{j-1} \le \max \intu{i}{j},\]
which then implies~$\intu{i}{0}\subseteq \cdots \subseteq \intu{i}{n}$ because, by construction, the latter intervals all share the same left endpoint~$\min \intu{i}{0} = \ee{q_i}$, and~$j$ is an arbitrary index in~$\{1, \dots, n\}$.

For indices~$i$ that are not equal to some~$s_h$, the interval~$i$ remains unchanged at step~$j$, and we are done. So we can assume~$i = s_h$ for some~$h$ in~$\{1, \dots, l\}$; thus,~$\max \intu{i}{j-1} = \func{t_h}- \fund{s_h}$ follows from~\eqref{eq:formulas-for-previous-step}. The value~$\func{t_h}$ is strictly smaller than both values~$\func{j}$ and~$\func{t_{h-1}}$ by choice of~$t_0$ and~$t_{h-1}$. So we are done because, by construction, in case~$h=1$, we have~$\max \intu{i}{j}= \func{j}- \fund{s_h}$, while, in case~$h >1$, we have~$\max \intu{i}{j}= \func{t_{h-1}}- \fund{s_h}$.
\end{proof}

As a corollary of Claim~\ref{claim:previous-step}, we obtain that, when constructing a test of the form~$\miller{Q}$, any terminal expansion of an interval at some step is, in fact, terminal in the sense that the interval will remain unchanged at all larger steps. 

\begin{claimbbb}
Let~$Q= \{q_0 <  \cdots < q_n\}$ be a subset of the domain of~$g$, and consider the construction of~$\miller{Q}$. Let~$j\geq 1$ be a step of the construction, where the index~$s_2$ could be defined, and let~$(t_0,s_1,t_1, \dots, s_l,t_l)$ be the index stair of this step. Then, for every $h=2, \dots,l$, it holds that $\rightendofu{s_h}{j} = \rightendofu{s_h}{n}$, and therefore, that $\intu{s_h}{j} = \intu{s_h}{n}$.
\end{claimbbb}

\begin{proof}
For a proof by contradiction, we assume that the claim assertion is false, i.e., we can fix some~$h\ge 2$ such that the values~$\rightendofu{s_h}{j}$ and~$\rightendofu{s_h}{n}$ differ. Let~$k$ be the least index in~$\{j+1,\dots, n\}$ such that the values~$\rightendofu{s_h}{k-1}$ and~$\rightendofu{s_h}{k}$ differ, and let~$(t_0^k, s^k_1, t^k_1, \dots)$ be the index stair of step~$k$. Since the interval with index~$s_h$ does not remain unchanged at step~$k$, we must have~$s_h = s^k_x$ for some $x\geq 1$. In order to obtain the desired contradiction, we distinguish the cases~$x=1$ and~$x>1$. In case~$x=1$, by construction, we have
\[t_0^k < s^k_1 = s_h < j < k
\makebox[7em]{ and } t_0^k \le t_0 < s_1 < j < k,
\]
where all relations are immediate by choice of the involved indices except the nonstrict inequality. The latter inequality holds by choice of~$t_0^k$ because, by the chain of relations on the left, we have~$t_0^k < j$, and thus~$\func{j} \le \func{k}$, while~$\func{i} \le \func{j}$ holds for~$i=t_0 +1, \dots, j-1$ by choice of~$t_0$. Now, we obtain as a contradiction that~$s^k_1=s_h$ is chosen in the range~$t^k_0 +1, \dots, k-1$ as largest index that has minimum $\funsymbd$-value, where this range includes~$s_1$, hence~$\fund{s^k_1} \le \fund{s_1}$, while~$\fund{s_1} < \fund{s_h}$ by~$h\ge 2$. 

In case~$x>1$, we obtain
\begin{equation}\label{eq:int-i-int-u-int-i}
t_{h-1} = \rightendofu{s_h}{j} = \rightendofu{s_h}{k-1} = t^k_{x}, 
\end{equation}
which contradicts to~$t_{h-1}< s_h = s^k_x  \le t^k_x$. The equalities in~\eqref{eq:int-i-int-u-int-i} follow, from left to right, from~$h\ge 2$, from the minimality condition in the choice of~$k$ and, finally, from~$s_h=s^k_x$ and Claim~\ref{claim:previous-step}.
\end{proof}

The explicit description of the intervals of the form~$\intu{s_h}{j-1}$ according to Claim~\ref{claim:previous-step} now yields an explicit description of the endpoints of the half-open intervals of the form~$\intv{i}{j}$, from which in turn we obtain that all such intervals occurring at the same step are mutually disjoint, and the sum of their measures is equal to $\func{j} - \func{j-1}$.

\begin{claimbbb}\label{V=U-U}
Let~$Q= \{q_0 <  \cdots < q_n\}$ be a subset of the domain of~$g$, and consider the construction of~$\miller{Q}$. Let~$j>0$ be a step of the construction.

If $\func{j-1}\leq\func{j}$, then it holds for the index stair $(t_0,s_1,t_1, \dots, s_l,t_l)$ of this step that $l>0$, i.e., that $s_1$ can be defined, and we have
\begin{eqnarray}
\label{eq:first-added-interval}
\intv{s_1}{j} &=& (\func{t_1} - \fund{s_1}, \func{j} - \fund{s_1}],\\
\label{eq:m-th-added-interval}
\intv{s_h}{j}&=& (\func{t_h} - \fund{s_h}, \func{t_{h-1}} - \fund{s_h}]
\quad \text{ for }h\ge 2 \quad(\text{if defined}),\\
\label{eq:empty-added-interval}
\intv{i}{j}&=& \emptyset
\quad\text{ for~$i$ in }  \{0,\dots,n\}\setminus\{s_1,\dots,s_l\}.
\end{eqnarray}
In particular, the half-open intervals~$\intv{0}{j}, \dots, \intv{n}{j}$ are mutually disjoint, and the sum of their Lebesgue measures can be bounded as follows
\begin{equation}\label{eq:summarized-length-for-step-j}
    \sum_{i=0}^n \mu(\intv{i}{j}) = \sum_{h=1}^l \mu(\intv{s_h}{j}) = \func{j} - \func{j-1}.
\end{equation}

If $\func{j-1}>\func{j}$, then the index stair of this step has a form $(j-1)$, i.e., $t_0 = j-1$, $l=0$, all the intervals~$\intv{0}{j}, \dots, \intv{n}{j}$ are empty, i.e.,
\begin{equation}
\label{eq:empty-added-interval-empty-step}
\intv{i}{j}= \emptyset\text{ for all }i,
\end{equation}
and the sum of their Lebesgue measures is equal to zero
\begin{equation}\label{eq:summarized-length-for-empty-step}
    \sum_{i=0}^n \mu(\intv{i}{j}) = 0.
\end{equation}
\end{claimbbb}

\begin{proof}
If~$\func{j-1}\leq\func{j}$, then we have $t_0 \neq j-1$, hence the set $\{x: t_0 < x\leq j-1\}$ used in~\eqref{eq:define-s-h} to define $s_1$ contains at least one index, namely $j-1$, and therefore, $s_1$ can be defined.

If~$\func{j-1}>\func{j}$, then we have $t_0 = j-1$, hence the set $\{x: t_0 < x\leq j-1\}$ is empty, and $s_0$ cannot be defined.

Recall that, by construction, the intervals~$\intu{i}{0}, \dots, \intu{i}{n}$ are all nonempty and have all the same left endpoint~$\func{i}-\fund{i}$; thus, we have
\[
\intv{i}{j} 
= \intu{i}{j} \setminus \intu{i}{j-1}
= (\max \intu{i}{j-1}, \max \intu{i}{j}].
\]
This implies~\eqref{eq:empty-added-interval} in case~$\func{j-1}\leq\func{j}$ and~\eqref{eq:empty-added-interval-empty-step} in case~$\func{j-1}>\func{j}$ since, for~$i$ not in~$\{s_1, \dots, s_l\}$, the interval with index~$i$ remains unchanged at step~$j$, hence~$\intv{i}{j}$ is empty.

In case~$\func{j-1}>\func{j}$, we obtain~\eqref{eq:summarized-length-for-empty-step} directly from~\eqref{eq:empty-added-interval-empty-step} by
\[\sum_{i=0}^n \mu(\intv{i}{j}) = \sum_{i=0}^n \mu(\emptyset) = 0,\]
so, from now on, we assume that~$\func{j-1}\leq\func{j}$ and, as we have seen before, $l>0$.

In order to obtain~\eqref{eq:first-added-interval} and~\eqref{eq:m-th-added-interval} in this case, it suffices  to observe that~$\max \intu{s_h}{j}$ is equal to~$\func{j} - \fund{s_1}$ in case~$h=1$ and is equal to~$\func{t_{h-1}} - \fund{s_h}$ in case~$h\ge 2$, respectively, while~$\max \intu{s_h}{j-1} = \func{t_h}-\fund{s_h}$ for~$h=1, \dots, l$ by Claim~\ref{claim:previous-step}. 

Next, we show that the half-open intervals~$\intv{0}{j}, \dots, \intv{n}{j}$ are mutually disjoint. These intervals are all empty except for~$\intv{s_1}{j}, \dots, \intv{s_l}{j}$. In case the latter list contains at most one interval, we are done. So we can assume~$l\ge 2$. Disjointedness of~$\intv{0}{j}, \dots, \intv{n}{j}$ then follows from
\[
\min \intv{s_l}{j} <  \max \intv{s_l}{j} 
< \cdots < \min \intv{s_1}{j} < \max \intv{s_1}{j}.
\]
These inequalities hold  because, for~$h=  2, \dots, l$, by Claim~\ref{claim:p-q-sequence}, we have~${\func{t_{h-1}}>\func{t_{h}}}$ and~$\fund{s_{h-1}}<\fund{s_{h}}$, which together with~\eqref{eq:first-added-interval} and~\eqref{eq:m-th-added-interval} yields
\[
\func{t_{h}} - \fund{s_{h}} = \min \intv{s_h}{j} < \max \intv{s_h}{j} = \func{t_{h-1}} - \fund{s_h} <  
\func{t_{h-1}} - \fund{s_{h-1}} =  \min \intv{s_{h-1}}{j}.
\]
Since the intervals~$\intv{0}{j}, \dots, \intv{n}{j}$ are mutually disjoint, the Lebesgue measure of their union is equal to
\begin{align*}
\sum_{i=0}^n \mu(\intv{i}{j})
=\sum_{h=1}^l \mu(\intv{s_h}{j})
&=\mu(\intv{s_1}{j})  \;\; + \sum_{h=2}^l \mu(\intv{s_h}{j})\\  
&= (\func{j} - \func{t_1}) + \sum_{h=2}^l (\func{t_{h-1}} - \func{t_{h}})
= \func{j} - \func{t_l} = \func{j} - \func{j-1},
\end{align*}
where the last two equalities are implied by evaluating the telescoping sum and because~$t_l$ is equal to~$j-1$ by Claim~\ref{claim:p-q-sequence}, respectively.
\end{proof}

\subsection*{The proof of Claim~\ref{claim:upper-bound-for-single-stage}}
Using the results on the intervals~$\intv{i}{j}$ in Claim~\ref{V=U-U}, we can now easily demonstrate Claim~\ref{claim:upper-bound-for-single-stage}. We have to show for every subset $Q=\{q_0 <\cdots < q_n\}$ of the domain of~$g$ that
\begin{equation}\label{eq:finite-measure-inductive-form}
\mu\big(\miller{Q}\big)\leq g(q_n) - g(q_0).
\end{equation}
This inequality holds true because we have
\begin{align*}
\mu\big(\miller{Q}\big) 
&= \sum_{U \in \miller{Q}} \mu(U) 
= \sum_{i=0}^{n} \mu(\intu{i}{n}) 
= \sum_{i=0}^{n} \sum_{j=1}^{n} \mu(\intv{i}{j}) 
= \sum_{j=1}^{n} \sum_{i=0}^{n}  \mu(\intv{i}{j}) \\
&= \sum_{j=1}^{n}  \big( \max\{\func{j} - \func{j-1},0\}\big) \leq g(q_n) - g(q_0).
\end{align*}
In the first line, the first equality holds by definition of~$\mu\big(\miller{Q}\big)$, while the second and the third equalities hold by construction of~$\miller{Q}$ and by~\eqref{eq:definition-of-V}, respectively.

In the second line, the equality holds because, for every $j$, we have
\[\sum_{i=0}^{n}  \mu(\intv{i}{j}) = \max\{\func{j} - \func{j-1},0\}\]
due to the following argumentation: in case $\func{j-1}\leq \func{j}$, we obtain from Claim~\ref{V=U-U},~\eqref{eq:summarized-length-for-step-j}, that $\sum_{i=0}^n \mu(\intv{i}{j}) = \func{j} - \func{j-1} \geq 0$, and in case $\func{j-1}> \func{j}$, we obtain from Claim~\ref{V=U-U},~\eqref{eq:summarized-length-for-empty-step}, that $\sum_{i=0}^n \mu(\intv{i}{j}) = 0$. 

Finally, the inequality in the second line holds because the difference ${g(q_n)-g(q_0)}$ can be rewritten as a telescoping sum
\[g(q_n) - g(q_0) = \big(g(q_n) - g(q_{n-1})\big) + \big(g(q_{n-1}) - g(q_{n-2})\big) + \dots + \big(g(q_1) - g(q_0)\big),\]
and for every $j$ from $1$ to $n$, we have
\[\max\{\func{j} - \func{j-1},0\}\leq g(q_j) - g(q_{j-1})\]
due to the following argumentation: in case $\func{j-1} \leq \func{j}$, we have
\[
0 \leq \func{j} - \func{j-1}
= \big(g(q_j) - c q_{j}\big) -  \big(g(q_{j-1}) - c q_{j-1}\big) \le g(q_j) - g(q_{j-1}),
\]
 where the equality holds since~$\func{k} = \funsymbc(q_k) = g(q_j) - c q_{k}$ for every $k$ in the range ${0,\dots,n}$ and the right inequality is implied by~$q_{j-1}<q_{j}$. In case $\func{j-1}>\func{j}$, we directly have
\[\func{j} - \func{j-1} < 0\leq g(q_j) - g(q_{j-1}),\]
where the right inequality is implied by monotonicity of $g$ for arguments ${q_{j-1}<q_j}$.

\subsection*{Preliminaries for the proof of Claim~\ref{general-bound-for-coverall}}
The following claim asserts that, when adding to a finite subset~$Q$ of the domain of~$g$ one more rational that is strictly larger than all members of~$Q$,  the cover function of the test corresponding to~$Q$ increases at most  by one on all nonrational arguments.
\begin{claimbbb} \label{claim:k<k<k+1}
Let~$Q$ be a finite subset of the domain of~$g$. Then, for every real $p\in[0,1]$, it holds that
\begin{equation}\label{eq:k<k<k+1}
\coverset{Q \setminus \{\max Q\}}{p}\leq \coverset{Q}{p}\leq \coverset{Q \setminus \{\max Q\}}{p} + 1.
\end{equation}
\end{claimbbb}
\begin{proof}
Let~$Q = \{q_0 <  \cdots < q_n\}$ be a finite subset of the domain of~$g$. We consider the constructions of the tests~$\miller{Q\setminus \{q_n\}}$ and~$\miller{Q}$ and denote the intervals constructed in the latter test by~$\intu{i}{j}$, as usual. The steps~$0$ through~$n$ of both constructions are essentially identical up to the fact that, in the latter construction, in addition, the interval~$\intu{n}{0}$ is initialized as~$[\ee{q_n},\ee{q_n}]$ in step~$0$ and then remains unchanged. Accordingly, the test~$\miller{Q\setminus \{q_n\}}$ consists of the intervals~$\intu{0}{n-1}, \dots, \intu{n-1}{n-1}$, therefore, the first inequality in~\eqref{eq:k<k<k+1} holds true because the test~$\miller{Q}$ is then obtained by expanding these intervals. More precisely, in the one additional step of the construction of~$\miller{Q}$, these intervals and the interval~$\intu{n}{n-1}=\intu{n}{0}$ are expanded by letting
\[
\intu{i}{n}= \intu{i}{n-1} \cup \intv{i}{n} \quad \text{ for $i=0, \dots, n$}. 
\]
The intervals~$\intv{0}{n}, \dots, \intv{n}{n}$ are mutually disjoint by Claim~\ref{V=U-U}. Consequently, the cover functions of both tests can differ at most by one, hence also the second inequality in~\eqref{eq:k<k<k+1} holds true.
\end{proof}

The following three somewhat technical claims will be used in the proof of Claim~\ref{general-bound-for-coverall}.

\begin{claimbbb} \label{i<j<k-coverage}
Let~$Q= \{q_0 <  \cdots < q_n\}$ be a subset of the domain of~$g$, and
let $p\in (0,1]$ be a real number. Let~$i,j,k$ be indices such that~${q_0 \le  q_i<q_j<q_k < p}$, 
\begin{equation} \label{eq:i<j<k-coverage-conditions}
\ee{p}  < \func{i} - \fund{j},  \makebox[5em]{ and }\ee{p}  < \func{k} - \fund{j}. 
\end{equation}
Let~$Q_i=\{q_0,\dots,q_i\}$ and ${Q_k=\{q_0,\dots,q_i,\dots,q_j,\dots,q_k\}}$. Then the following strict inequality holds:
\begin{equation}
\coverset{Q_i}{\ee{p}}<\coverset{Q_k}{\ee{p}}.
\end{equation}
\end{claimbbb}

\begin{proof}
Let $s=\max\argmin\{\fund{x}:i<x<k\}$, and let 
\[
i'=\max\argmax\{\func{y}:i\leq y<s\} \makebox[4em]{ and }
k'=\max\argmax\{\func{y}:s<y\leq k\}.
\]
The following inequalities are immediate by definition
\begin{equation}\label{eq:s-t-greater-than-i-k}
\fund{s}\leq\fund{j},\qquad\func{s}<\func{i}\leq\func{i'},\qquad\func{s}<\func{k} \leq \func{k'},
\end{equation}
except the two strict upper bounds for~$\func{s}$. The first of these bounds, i.e., $\func{s}< \func{i}$, follows from
\[
\func{s}-\fund{s} =\ee{q_s} < \ee{p} < \func{i} - \fund{j}
\le \func{i} - \fund{s},
\]
where the inequalities hold, from left to right, by~$q_s < q_k <p$,  by~\eqref{eq:i<j<k-coverage-conditions}, and by~\eqref{eq:s-t-greater-than-i-k}. By an essentially identical argument, this chain of relations remains valid when~$\func{i}$ is replaced by~$\func{k}$, which shows the second bound, i.e.,~$\func{s}< \func{k}$.

We denote the intervals that occur in the construction of~$\miller{Q}$ by~$\intu{i}{j}$, as usual. As in the proof of Claim~\ref{claim:k<k<k+1}, we can argue that the construction of the test~$\miller{Q_i}$ is essentially identical to initial parts of the construction of~$\miller{Q_k}$ and of~$\miller{Q}$, and that a similar remark holds for the tests~$\miller{Q_k}$ and~$\miller{Q}$. Accordingly, we have
\[
\miller{Q_i} = (\intu{0}{i}, \dots, \intu{i}{i})
\makebox[5em]{ and }
\miller{Q_k} = (\intu{0}{k}, \dots, \intu{i}{k}, \intu{i+1}{k}, \dots,\intu{k}{k}),
\]
For~$x=1, \dots, i$, the interval~$\intu{x}{i}$ is a subset of~$\intu{x}{k}$ by~$i<k$ and Claim~\ref{claim:previous-step}. Hence it suffices to show
\begin{equation}\label{eq:u-s-k-assertion}
\ee{p}\in\intu{s}{k},
\end{equation}
because the latter statement implies by~$i < s < k$ that 
\[
\coverset{Q_i}{\ee{p}}+1 
\leq 
\coverset{Q_k}{\ee{p}}.
\]
We will show~\eqref{eq:u-s-k-assertion} by proving that~$\ee{p}$ is strictly larger than the left endpoint and is strictly smaller than the right endpoint of the interval~$\intu{s}{k}$. The assertion about the left endpoint, which is equal to~$\func{s}-\fund{s}=\ee{q_s}$, holds true because the inequalities~$s < k$ and~$q_k < p$ imply together that~$q_s < p$. 

In order to demonstrate the assertion about the right endpoint, we distinguish two cases.

\underline{Case 1}: $\func{i'}>\func{k'}$. In this case, let~$(t_0,s_1, t_1, \dots)$ be the index stair of the step $k'$. Then we have
\begin{equation}\label{eq:u-s-k-assertion-case-i}
i \leq  i^{\prime} \leq t_0 < s < k^{\prime} \le k,
\end{equation}
where all inequalities are immediate by choice of~$i^{\prime}$ and~$k^{\prime}$ except the second and the third one. Both inequalities follow from the definition of~$t_0$: the second one together with the case assumption, the third one because, by~$\func{s} < \func{k^{\prime}}$ and by choice of~$k^{\prime}$, no value among~$\func{s}, \dots, \func{k^{\prime}-1}$ is strictly larger than~$\func{k^{\prime}}$.

By~\eqref{eq:u-s-k-assertion-case-i}, it is immediate that the set~${\{t_0+1,\dots,k'-1\}}$ contains~$s$ and is a subset of the set~${\{i+1,\dots,k-1\}}$. By definition, the indices~$s_1$ and~$s$ minimize the value of $\fund{j}$ among the indices~$j$ in the former and in the latter set, respectively, hence we have~$s=s_1$. By construction, in step~$k^{\prime}$, the right endpoint of the interval~$\intu{s}{k'}$ is then set to~$\func{k^{\prime}} - \fund{s}$. So we are done with Case~1 because we have
\[
\ee{p} < \func{k}-\fund{j} 
\leq \func{k'} - \fund{s} 
= \max \intu{s}{k'} \le \max \intu{s}{k},
\]
where the first inequality holds by assumption of the claim, the second one holds by~\eqref{eq:s-t-greater-than-i-k}, and the last one holds by~$k' \le k$ and Claim~\ref{claim:previous-step}.

\underline{Case 2}: $\func{i'}\leq\func{k'}$. In this case, let
\begin{equation}\label{eq:index-t-used-in-proof}
        r=\min\{y:s<y\leq k\wedge \func{i'}\leq\func{y}\},
\end{equation}
and let~$(t_0,s_1, t_1, \dots)$ be the index stair of the step~$r$. By choice of~$s$ and by~$r \le k$, all values among~$\fund{s+1}, \dots, \fund{r-1}$ are strictly larger than~$\fund{s}$, hence we have~$s_1 \le s$ by choice of~$s_1$. Accordingly, the index 
\begin{equation}\label{eq:index-m-used-in-proof}
m=\max\{h > 0: s_h\leq s\}
\end{equation}
is well-defined. Next, we argue that, actually, it holds that~$s_m = s$. Otherwise, i.e., in case~$s_m<s$,  by choice of~$s_m$ and since~$s$ is chosen as largest index in the range~$i+1, \dots, k-1$ that has minimum $\funsymbd$-value, we must have~$s_m \le i$, and thus
\[t_0 < s_m \le i\leq i'< s <r .\]
Therefore, the index~$i'$ belongs to the index set used to define $t_m$ according to~\eqref{eq:define-t-h}, while the values~$\func{i'+1}, \dots, \func{r-1}$ are all strictly smaller than~$\func{i'}$. The latter assertion follows for the indices in the considered range that are strictly smaller, equal, and strictly larger than~$s$ from choice of~$i'$, from~\eqref{eq:s-t-greater-than-i-k}, and from choice of~$r$, respectively. It follows that~$t_m \leq i'$, hence~$s_{m+1}$ exists and is equal to~$s$ by minimality of~$\fund{s}$ and by choice of~$s_{m+1}$ in the range~${t_m +1, \dots, r-1}$, which contains~$s$ by~${i^{\prime} < s <r}$. But, by definition of~$m$, we have~${s < s_{m+1}}$, a contradiction. Consequently, we have~${s_m = s}$. 

Observe that we have
\begin{equation}\label{eq:dcp-le-gamma-r-delta-s}
\ee{p} < \func{i}  - \fund{j}
\leq \func{i^{\prime}}  - \fund{s}
\leq \func{r}  - \fund{s},
\end{equation}
where the inequalities hold, from left to right, by assumption of the claim, by~\eqref{eq:s-t-greater-than-i-k}, and by choice of~$r$.

In case~$m=1$, we are done because then we have by construction
\begin{equation}\label{eq:gamma-r-delta-s-le-max-u-s-k}
\func{r}  - \fund{s}  = \max{\intu{s}{r}} \le  \max{\intu{s}{k}},
\end{equation}
hence~$\ee{p}$ is indeed strictly smaller than the right endpoint of the interval~$\intu{s}{k}$. 

So, from now one, we can assume~$m>1$. Then~$s_{m-1}$ and~$t_{m-1}$ are defined, and the upper bound of the interval~$\intu{s}{r}$ is set equal to~$\func{t_{m-1}}-\delta_s$ by~\eqref{eq:step-definition-terminal}. Consequently, in case~$\func{i'}\leq\func{t_{m-1}}$, both of \eqref{eq:dcp-le-gamma-r-delta-s} and~\eqref{eq:gamma-r-delta-s-le-max-u-s-k} hold true with~$\func{r}$ replaced by~$\func{t_{m-1}}$, and we are done by essentially the same argument as in case~$m=1$. 

We conclude the proof of the claim assertion by demonstrating the inequality ${\func{i'}\leq\func{t_{m-1}}}$. The index~$s$ is chosen in the range~$i+1, \dots, k-1$ as largest index with minimum $\funsymbd$-value. The latter range  contains the range~$i+1, \dots, r-1$  because we have~$i < s < r \le k$.  The index~$s_{m-1}$ differs from~${s=s_m}$ and is chosen as the largest index with minimum $\funsymbd$-value among indices that are less than or equal to~$r-1$, hence~${s_{m-1} \le i}$. By~$i \le i^{\prime} < s <r$, the index $i'$ belongs to the range~$s_{m-1}, \dots, r-1$, from which~$t_{m-1}$ is chosen as largest index with maximum $\funsymbd$-value according to~\eqref{eq:define-t-h}, hence we obtain that~${\func{i'}\leq\func{t_{m-1}}}$. 
\end{proof}

\begin{claimbbb}\label{claim:coverset-coincides-coverall}
Let $Q$ be a nonempty finite set of rationals, and let~$p$ be a nonrational real in~$[0,1]$. In case $p>\max Q$, it holds that
\begin{equation}\label{eq:coverset-coincides-coverall}
\coverset{Q}{\ee{p}} = \coverall{Q}{\ee{p}}.
\end{equation}
\end{claimbbb}

\begin{proof}
The inequality~$\coverset{Q}{\ee{p}} \leq \coverall{Q}{\ee{p}}$ is immediate by definition of~$\coverall{Q}{\ee{p}}$. We show the reverse inequality~$\coverset{Q}{\ee{p}} \geq \coverall{Q}{\ee{p}}$ by induction on the size of~$Q$.

In the base case, let~$Q$ be empty or a singleton set. The induction claim holds in case~$Q$ is empty because then~$Q$ is its only subset as well as in case~$Q$ is a singleton because then~$\coverallfun{Q}$ is equal to the maximum of~$\coversetfun{Q}$ and~$\coversetfun{\emptyset}$, where the latter function is identically~$0$. 

In the induction step, let~$Q$ be of size at least~$2$. For a proof by contradiction, assume that the induction claim does not hold true for~$Q$, i.e., that there exist a subset $H$ of~$Q$ such that 
\begin{equation}\label{eq:Q-H-to-contradict}
\coverset{Q}{\ee{p}} < \coverset{H}{\ee{p}}.
\end{equation}

Then we have the following chain of inequalities
\begin{equation} \label{eq:circle-ineuality}
\coverset{Q}{\ee{p}} \geq \coverset{Q\setminus \{\max Q\}}{\ee{p}} \geq \coverset{H\setminus\{\max Q\}}{\ee{p}} \geq \coverset{H}{\ee{p}} - 1 \ge \coverset{Q}{\ee{p}},
\end{equation}
where the first and the third inequalities hold true by Claim~\ref{claim:k<k<k+1}, the second one holds by the induction hypothesis for the set~$Q\setminus \{\max Q\}$, and the fourth one by~\eqref{eq:Q-H-to-contradict}. The first and the last values in the chain~\eqref{eq:circle-ineuality} are identical, and thus the chain remains true when we replace all inequality symbols by equality symbols, i.e., we obtain
\begin{equation} \label{eq:circle-equality}
\coverset{Q}{\ee{p}} = \coverset{Q\setminus \{\max Q\}}{\ee{p}} = \coverset{H\setminus\{\max Q\}}{\ee{p}} = \coverset{H}{\ee{p}} - 1 = \coverset{Q}{\ee{p}}.
\end{equation}
Since~$\coverset{H}{\ee{p}}$ is strictly larger than~$\coverset{H\setminus\{\max Q\}}{\ee{p}}$, the set~$H$ must contain~$\max Q$, hence~$\max Q$ and~$\max H$ coincide, and~$H$ has size at least two.

Now, let~$Q=\{q_0, \dots, q_n\}$, where~$q_0 < \cdots < q_n$, and let~$H = \{q_{\zindex{0}}, \dots, q_{\zindex{n_H}}\}$, where~$\zindex{0} <  \dots < \zindex{n_H}$. Furthermore, let~$Q_i=\{q_0, \dots, q_i\}$ for~$i=0, \dots, n$, and let~$H_i=\{q_{\zindex{0}}, \dots, q_{\zindex{i}}\}$ for~$i=0, \dots, n_H$.
So, the set~$Q$ has size~$n+1$, its subset~$H$ has size~$n_H+1$, and the function~$\zindexfunsymb$ transforms indices with respect to~$H$ into indices with respect to~$Q$. For example, since the maxima of~$Q$ and~$H$ coincide, we have~$\zindex{n_H}=n$. 

In what follows, we consider the construction of~$M(H)$. The index stairs that occur in this construction contain indices with respect to~$H$, i.e., for example, the index~$t_0$ refers to~$q_{\zindex{t_0}}$. A similar remark holds for the intervals that occur in the construction of~$M(H)$, i.e., for such an interval~$\intu{s}{t}$, we have
\[\max \intu{s}{t} = 
\func{\zindex{t}}-\fund{\zindex{s}} = \funsymbc(q_{\zindex{t}}) - \funsymbd(q_{\zindex{s}}).\]
However, as usual, for a given index~$i$, we write~$\func{i}$  for~$\funsymbc(q_i)$ and~$\fund{i}$  for~$\funsymbd(q_i)$. 

For every interval of the form~$\intu{s}{t}$ that occurs in some step of the construction of~$M(H)$, the left endpoint~$\ee{q_{\zindex{s}}}$ of this interval is strictly smaller than~$\ee{p}$ by assumption of the claim, hence, for every such interval, it holds that
\begin{equation}\label{eq:d-c-max-u-less-than-d-c-p}
\ee{p} \in  \intu{s}{t}  \makebox[10em]{ if and only if}
\ee{p} \le \max    \intu{s}{t}.   
\end{equation}

Let~$(t_0,s_1,t_1, \dots,s_l)$ be the index stair of step~$n_H$ of the construction of~$M(H)$, i.e., of the last step, and recall that these indices are chosen with respect to~$H$, e.g., the index~$t_0$ stands for~$q_{\zindex{t_0}}$. By the third equality in~\eqref{eq:circle-equality}, for some index $h\in\{1,\dots,l\}$, the interval~$\intv{s_h}{n_H}$ added during this step contains~$\ee{p}$, that is, 
\begin{equation} \label{eq:intvTilde-containing-p}
\ee{p} \in \intv{s_h}{n_H} = \intu{s_h}{n_H} \setminus \intu{s_h}{n_H-1}.
\end{equation}
By the explicit descriptions for the left and right endpoint of~$\intv{s_h}{n_H}$ according to Claim~\ref{V=U-U}, we obtain
\begin{eqnarray}
\label{eq:p-bounds-for-m=1}
\func{\zindex{t_1}}-\fund{\zindex{s_1}}< \, \ee{p}& \, 
\le \func{\zindex{n_H}}-\fund{\zindex{s_1}} &\text{ if } h=1,
\\
\label{eq:p-bounds-for-m>1}
\func{\zindex{t_h}}-\fund{\zindex{s_h}}< \, \ee{p}& \,
\le \func{\zindex{t_{h-1}}}-\fund{\zindex{s_h}}
\le \func{\zindex{n_H}}-\fund{\zindex{s_h}} 
&\text{ if } h>1.
\end{eqnarray}
So, in the last step of the construction of~$M(H)$, the real~$\ee{p}$ is covered via the expansion of the interval with index~$s_h$. We argue next that, in the construction of~$M(H)$, the last step before step~$n_H$, in which~$\ee{p}$ is covered by the expansion of some interval, must be not larger than~$s_h$, i.e., we show
\begin{equation}\label{eq:first-equality-for-H}
\coverset{H_{s_h}}{\ee{p}} = \coverset{H\setminus\{\max H\}}{\ee{p}}.
\end{equation}

For a proof by contradiction, assume that this equation is false. Then there is a step~$x$ of the construction of~$M(H)$ with index stair~$(t_0',s'_1,t'_1,\dots,s'_{l'},t'_{l'})$ and some index~$i$ in~$\{1, \dots, l'\}$ such that
\begin{equation}\label{eq:stage-y-index-stair}
s_h < x  < n_H \makebox[5em]{ and }
\ee{p} \in \intv{s'_{i}}{x} = \intu{s'_{i}}{x} \setminus \intu{s'_{i}}{x-1}.
\end{equation}
Observe that the indices in this index stair are indices with respect to the set~$H_x $ but coincide with indices with respect to the set~$H$ because~$H_x$ is an initial segment of~$H$ in the sense that~$H_x$ contains the least~$x+1$ members of~$H$. In particular, the index transformation via the function~$\zindexfunsymb$ works also for the indices in this index stair, for example, the index~$t_0'$ refers to~$z(t_0')$.

We have~$s'_{i} \le x$ because, otherwise, the interval~$\intv{s'_{i}}{x}$ would be empty by~Claim~\ref{claim:before}. Furthermore, the indices~$s_h$ and~$s'_{i}$ must be distinct because~$\ee{p}$ is contained in both of the intervals~$\intv{s_h}{n_H}$ and~$\intv{s'_{i}}{x}$, 
while the former interval is disjoint from the interval~$\intv{s_h}{x}$ by~$\intv{s_h}{x} \subseteq \intu{s_h}{x} \subseteq \intu{s_h}{n_H-1}$ and~$\intu{s_h}{n_H-1}\cap\intv{s_h}{n_H} = \emptyset$.

Next, we argue that
\begin{equation}\label{eq:gamma-chain-t-j-prime-to-n-h}
\func{\zindex{t'_i}}
\le \func{\zindex{x}}
\le \func{\zindex{t_h}}
\le \func{\zindex{n_H}}
<\func{\zindex{t_0}}
\makebox[5em]{ and }
\func{\zindex{x}} < \func{\zindex{t_{h-1}}}.
\end{equation}
In the chain on the left, the last two inequalities hold by~$t_0 < t_h < n_H$ and by definition of~$t_0$. The first inequality holds because, otherwise, in step~$x$, the index~$t_i' > t_0'$ would have been chosen in place of~$t_0'$. The second inequality holds by choice of~$t_h$ as largest index in the range~$s_h, \dots, n_H-1$  that has maximum \mbox{$\funsymbc$-value} and because this range contains~$x$. 
From the latter inequality then follows the single inequality on the right since~$\func{\zindex{t_{h}}} < \func{\zindex{t_{h-1}}}$ holds by definition of index stair. From~\eqref{eq:gamma-chain-t-j-prime-to-n-h}, we now obtain
\begin{equation}\label{eq:t-i-minus-one-u-prime-plus-s}
t_{h-1} \le t_0' \makebox[8em]{ and }
\fund{\zindex{s_h}} \le \fund{z(s'_1)} \le \fund{z(s'_i)}.
\end{equation}
Here, the first inequality is implied by the right part of~\eqref{eq:gamma-chain-t-j-prime-to-n-h} and choice of~$t_0'$. The last inequality holds by definition of index stair. The remaining inequality holds because~$s_h$ and~$s'_1$ are chosen as largest indices with minimum $\funsymbd$-value in the ranges~$t_{h-1}+1, \dots, n_H-1$ and~$t_0'+1, \dots, x-1$, respectively, where the latter range is a subset of the former one by the just demonstrated first inequality and since~$x$ is in~$H$.

Now, we obtain as a contradiction to~\eqref{eq:intvTilde-containing-p} that~$\ee{p}$ is in~$\intu{s_h}{n_H-1}$ since we have
\[
\ee{p} 
\le \max \intu{s'_i}{x}
\le \func{\zindex{x}} - \fund{\zindex{s'_i}}
\le \func{\zindex{t_h}} - \fund{\zindex{s_h}}
= \max \intu{s_h}{n_H-1}. 
\]
Here, the first inequality holds because~$\ee{p}$ is in~$\intu{s'_i}{x}$ by choice of~$i$ and~$x$. The second inequality holds because, by construction, $\max \intu{s'_i}{x}$ is equal to~$\func{\zindex{x}} - \fund{\zindex{s'_i}}$ in case~$i=1$ and is equal to~$\func{\zindex{t'_{i-1}}} - \fund{\zindex{s'_i}}$ in case~$i>1$, where~$\func{\zindex{t'_{i-1}}} \le \func{\zindex{x}}$ by~\eqref{eq:stair-values} since~$t'_{i-1}$ lies in the index stair of step~$x$ and~$i-1\geq 1$.  The third inequality holds by~\eqref{eq:gamma-chain-t-j-prime-to-n-h} and~\eqref{eq:t-i-minus-one-u-prime-plus-s}, and the final equality holds by Claim~\ref{claim:previous-step}. This concludes the proof of~\eqref{eq:first-equality-for-H}. 

By~\eqref{eq:first-equality-for-H}, during the steps~$s_h+1, \dots, n_H-1$, none of the expansions of any interval covers~$\ee{p}$. Now, let~$y$ be the minimum index in the range~$t_{h-1}, \dots, s_h$ such that, during the steps~$y+1, \dots, s_h$, none of the expansions of any interval covers~$\ee{p}$, i.e.,  
\begin{equation}\label{eq:definition-of-y<=s_i}
y =\min\{k \colon t_{h-1}\leq k \leq s_h \text{ and }
\coverset{H_k}{\ee{p}} = \coverset{H_{s_h}}{\ee{p}}\}.
\end{equation}
Note that~$y$ is an index with respect to the set~$H$. We demonstrate that the index~$y$ satisfies
\begin{equation}\label{eq:property-for-y}
\ee{p} \le \func{\zindex{y}} - \fund{\zindex{s_h}}.
\end{equation}
For further use, note that inequality~\eqref{eq:property-for-y} implies that~$y$ and~$s_h$ are distinct because,  otherwise, since we have~$\max Q  < p$, we would obtain the contradiction:
\[
\ee{p}
\le \func{\zindex{y}} - \fund{\zindex{s_h}}
= \func{\zindex{s_h}} - \fund{\zindex{s_h}}
= \ee{q_{\zindex{s_h}}}.
\]

Now, we show~\eqref{eq:property-for-y}. Assuming~$y = t_{h-1}$, the inequality is immediate by~\eqref{eq:p-bounds-for-m=1} and choice of~$t_0$ in case~$h=1$ and by~\eqref{eq:p-bounds-for-m>1} in case~$h>1$. So, in the remainder of the proof of~\eqref{eq:property-for-y}, we can assume~$t_{h-1}<y$. 

By choice of~$y$, we have~$\coverset{H_{y-1}}{p} \neq \coverset{H_{y}}{p}$, that implies~$\coverset{H_{y-1}}{p} < \coverset{H_{y}}{p}$ by Claim~\ref{claim:previous-step}. Consequently, for the index stair $(t_0'', s''_1,t''_1,\dots,s''_{l''},t''_{l''})$ of step $y$ of the construction of the test~$\miller{H}$, there exists an index~$j\in\{1,\dots,l''\}$ such that~$\ee{p}$ is in~$\intv{s''_{j}}{y}$. Thus, in particular, it holds that
\begin{equation}\label{eq:d-c-p-le-s-double-prime-j}
\ee{p}
\le \max \intu{s''_{j}}{y}
\le 
\func{\zindex{y}} - \fund{\zindex{s''_{j}}}
\end{equation}
because, by construction, the value~$\max \intu{s''_{j}}{y}$ is equal to~$\func{\zindex{y}} - \fund{\zindex{s''_{j}}}$ in case~$j=1$ and is equal to~$\func{\zindex{t''_{j-1}}} - \fund{\zindex{s''_{j}}}$ in case~$h>1$, where~$\func{\zindex{t''_{j-1}}}\le\func{\zindex{y}}$. By~\eqref{eq:d-c-p-le-s-double-prime-j}, it is then immediate that, in order to demonstrate~\eqref{eq:property-for-y}, it suffices to show that
\begin{equation}\label{eq:s-h-le-s-double-prime-j}
\fund{\zindex{s_h}} \le \fund{\zindex{s''_{j}}}. 
\end{equation}
The latter inequality follows in turn if we can show that
\begin{equation}\label{eq:t-t-hminuseins-lt-y-lt-sh-lt-nh}
t_{h-1} \le t_0'' \le t_{j-1}'' < y \le s_h < n_H,
\end{equation}
because the indices~$s_h$ and~$s''_j$ are chosen as largest indices with minimum $\funsymbd$-value in the ranges~$t_{h-1}+1, \dots, n_H-1$ and~$t''_{j-1}+1, \dots, y-1$, respectively, where the latter range is a subset of the former. 

We conclude the proof of~\eqref{eq:s-h-le-s-double-prime-j}, and thus also of~\eqref{eq:property-for-y}, by showing~\eqref{eq:t-t-hminuseins-lt-y-lt-sh-lt-nh}.
The second to last inequality holds by choice of~$y$, and all other inequalities hold by definition of index stair, except for the first one. Concerning the latter, by our assumption~$t_{h-1}< y$, by $y < n_H$, and by choice of~$t_{h-1}$, we obtain that~$\func{\zindex{y}}  < \func{\zindex{t_{h-1}}}$, which implies that~$t_{h-1} \le t_0''$ by choice of~$t_0''$.

Now, we can conclude the proof of the claim. For the set~$Q$ and the indices ${\zindex{y}< \zindex{s_h} <  \zindex{n_H}=n}$, by~\eqref{eq:p-bounds-for-m=1}, \eqref{eq:p-bounds-for-m>1}, and~\eqref{eq:property-for-y}, all assumptions of Claim~\ref{i<j<k-coverage} are satisfied, hence the claim yields that
\begin{equation} \label{covering-y<covering<n}
\coverset{Q_{\zindex{s_h}}}{\ee{p}}<\coverset{Q}{\ee{p}}.
\end{equation}
So we obtain the contradiction
\[
\coverset{Q}{\ee{p}}
= \coverset{H\setminus\{\max Q\}}{\ee{p}}  
= \coverset{H_{s_h}}{\ee{p}} 
\le \coverset{Q_{\zindex{s_h}}}{\ee{p}} 
< \coverset{Q}{\ee{p}},
\]
where the relations follow, from left to right, 
by~\eqref{eq:circle-equality},
by~\eqref{eq:first-equality-for-H}, 
by the induction hypothesis for the set~$Q_{\zindex{s_h}}$,
and by~\eqref{covering-y<covering<n}. 
\end{proof}

\begin{claimbbb}\label{after-p-every-new-covering-is-last}
Let~$Q= \{q_0 <  \cdots < q_n\}$ be a subset of the domain of~$g$, and for~$z= 0, \dots, n$, let~$Q_z= \{q_0, \dots, q_z\}$. Let~$p$ be  a nonrational real such that, for some index~$x$ in~$\{1, \dots, n\}$, it holds that $p\in[0,q_x]$ and
\begin{equation}\label{eq:claim-q-x-minus-1-neq-k-q-x}
\coverset{Q_{x-1}}{\ee{p}}\neq \coverset{Q_x}{\ee{p}}.
\end{equation}
Then it holds that 
\[
\coverset{Q_{x}}{\ee{p}} = \coverset{Q_{x+1}}{\ee{p}} = \dots = \coverset{Q_{n}}{\ee{p}}.
\]
\end{claimbbb}

\begin{proof}
We denote the intervals considered in the construction of the test~$\miller{Q}$ by~$\intu{i}{j}$, as usual. Again, we can argue that the construction of a test of the form~$\miller{Q_z}$ where~$z \le n$ is essentially identical to an initial part of the construction of~$\miller{Q}$, and that accordingly such a test~$\miller{Q_z}$ coincides with $(\intu{0}{z}, \dots, \intu{z}{z})$.

Let~$(t_0 ,s_1, t_1, \dots,s_l, t_l)$ be the index stair of step~$x$ in the construction of~$\miller{Q_n}$. By~\eqref{eq:claim-q-x-minus-1-neq-k-q-x}, there is an index~$h$ in~$\{1, \dots, l\}$ such that~$\ee{p}$ is in~$\intv{s_h}{x}$, hence 
\begin{equation}\label{eq:inequality-for-s-h}
\ee{q_{s_h}} = \min\intu{s_h}{x-1} \le \max\intu{s_h}{x-1}
= \func{t_h}-\fund{s_h} <\ee{p}\le \func{x}-\fund{s_h}.
\end{equation}
Here, the two equalities hold by definition of the interval and by Claim~\ref{claim:previous-step}, respectively. The strict inequality holds because~$\ee{p}$ is assumed not to be in~$\intu{s_h}{x-1}$. The last inequality holds because~$\ee{p}$ is assumed to be in~$\intu{s_h}{x-1}$, while, by construction, the right endpoint of the latter interval is equal to~$\func{x}-\fund{s_h}$ in case~$h=1$ and is equal to~$\func{t_h} - \fund{s_h}$ with~$\func{t_h} \leq \func{x}$ otherwise.

For a proof by contradiction, we assume that the conclusion of the claim is false. So we can fix an index $y\in\{x+1,\dots,n\}$ such that 
\[
\coverset{Q_{x}}{\ee{p}} = \coverset{Q_{y-1}}{\ee{p}} < \coverset{Q_{y}}{\ee{p}}.
\]
Let~$(t_0',s'_1,t'_1,\dots,s'_{l'},t'_{l'})$ be the index stair of step $y$ of the construction of~$\miller{Q_n}$. By essentially the same argument as in the case of~\eqref{eq:inequality-for-s-h}, we can fix an index~$i$ in~$\{1, \dots, l'\}$ such that~$\ee{p}$ is in~$\intv{s'_{i}}{y}$, and therefore, that
\begin{equation}\label{eq:inequality-for-s-i}
\ee{q_{s'_{i}}} = \min\intu{s'_i}{y-1} \le \max\intu{s'_i}{y-1} 
=  \func{t'_{i}}-\fund{s'_{i}}<\ee{p} \le \func{y}-\fund{s'_{i}}.
\end{equation}
By assumption, the real~$p$ is in~$[0,q_x]$, and together with~\eqref{eq:inequality-for-s-h} and~\eqref{eq:inequality-for-s-i}, we obtain $q_{s_h}<p \le q_x$ and $q_{s'_{i}}<p \le q_x$. Consequently, we have
\begin{equation}\label{eq:s,s'<x}
s_h<x \makebox[8em]{and} s'_{i}<x
\end{equation}
(where the left inequality also follows from definition of index stair). In particular, we have~$t_0' < x$, which implies by~$x<y$ and choice of~$t_0'$ that
\begin{equation}\label{eq:x<=y}
\func{y}< \func{x}.
\end{equation}
In order to derive the desired contradiction, we distinguish the three cases that are left open by~\eqref{eq:s,s'<x} for the relative sizes of the indices~$s_h$, $s'_{i}$, and~$x$. 

\underline{Case 1}: $s_h < s'_{i} < x$. Since~$s_h$ is chosen in the range~$t_{h-1}+1, \dots, x-1$ as largest index with minimum $\funsymbd$-value, we obtain by case assumption that
\begin{equation}\label{eq:s<s'}
\fund{s_h}<\fund{s'_{i}}.
\end{equation}
Furthermore, it holds that
\begin{equation}\label{eq:u-s-h-t-prime-i-minus-1-s-prime}
t_0 < s_h \le t'_{i-1}< s'_{i} < x < y.
\end{equation}
Here, the first and the third inequalities hold by definition of index stair. The two last inequalities hold by~\eqref{eq:s,s'<x} and by choice of~$y$, respectively. The remaining second inequality holds because, otherwise, i.e., in case~$t'_{i-1}< s_h$, the range~${t'_{i-1}+1, \dots, y-1}$, from which~$s'_{i}$ is chosen as largest index with minimum $\funsymbd$-value, would contain~$s_h$, which contradicts~\eqref{eq:s<s'}. 

Now, we obtain a contradiction, which concludes Case~1. Due to~$t_0 < t'_{i-1} <x$ and definition of~$t_0$, we have~$\func{t'_{i-1}}\leq\func{x}$. The latter inequality contradicts the fact that~$t'_{i-1}$ is chosen in the range~$s'_{i-1}+1, \dots, y-1$ as largest index with maximum $\funsymbc$-value, where the latter range contains~$x$ by~\eqref{eq:u-s-h-t-prime-i-minus-1-s-prime} and~$s'_{i-1} < s'_{i}$.

\underline{Case 2}: $s_h = s'_{i} < x$. In this case, we have
\[
\ee{p} \in \intv{s_h}{x} \makebox[4em]{and} \ee{p} \in \intv{s'_{i}}{y} = \intv{s_h}{y},\makebox[7em]{and thus,} \ee{p} \in \intv{s_h}{x}\cap \intv{s_h}{y},
\]
which cannot hold since~$\intv{s_h}{x}$ and~$\intv{s_h}{y}$ are disjoint by Claim~\ref{V=U-U}.

\underline{Case 3}: $s'_{i} < s_h < x$. In this case, we have
\begin{equation}\label{eq:s'<s-and-next-t'>x}
\fund{s'_{i}}<\fund{s_h} \makebox[6em]{ and}
\func{x} \le \func{t'_{i}}.
\end{equation}
Here, the first inequality holds since~$s'_{i}$ is chosen as largest index with minimum $\funsymbd$-value from a range that, by case assumption, contains~$s_h$. The second inequality holds since~$t'_{i}$ is chosen in the range $s'_{i}+1, \dots, y-1$ as largest index with maximum $\funsymbc$-value, where this range contains~$x$ by case assumption and~$x<y$. 

Now, we obtain a contradiction, which concludes Case~3, since we have
\begin{equation}
\ee{p} \leq \func{x} - \fund{s_h} < \func{t'_{i}} - \fund{s'_{i}}  < \ee{p},
\end{equation}
where the inequalities hold, from left to right, by~\eqref{eq:inequality-for-s-h}, by~\eqref{eq:s'<s-and-next-t'>x}, and by~\eqref{eq:inequality-for-s-i}. 

So we obtain in all three cases a contradiction, which concludes the proof of the claim.
\end{proof}

\subsection*{The proof of Claim~\ref{general-bound-for-coverall}}
Let~$Q= \{q_0 <  \cdots < q_n\}$ where $q_n<1$ be a subset of the domain of~$g$.
For~$z= 0, \dots, n$, let~$Q_z= \{q_0, \dots, q_z\}$, and let~$p$ be an arbitrary nonrational real in~$[0,1]$.
In order to demonstrate Claim~\ref{general-bound-for-coverall}, it suffices to show
\begin{equation}\label{eq:inequality-of-claim-general-bound-for-coverall}
\coverall{Q}{\ee{p}} \le \coverset{Q}{\ee{p}} +1.
\end{equation}
Since~$p$ was chosen as an arbitrary nonrational real in~$[0,1]$, this
easily implies the assertion of Claim~\ref{general-bound-for-coverall}, i.e., that~$\coverall{Q}{p'} \le \coverset{Q}{p'} + 1$ for all nonrational~$p'$ in~$[0, \eesymb]$.

By construction, for all subsets~$H$ of~$Q$, all intervals in the test~$\miller{H}$ have left endpoints of the form~$\funsymbc(q_i)-\funsymbd(q_i) = \ee{q_i}$. Consequently, in case~$p<q_0$, none of such intervals contains~$\ee{p}$, hence~$\coverall{Q_n}{\ee{p}} = 0$, and we are done. 

So, from now on, we can assume~$q_0 < p$. Then, among~$q_0, \dots, q_n$, there is a maximum value that is smaller than~$p$, and we let 
\begin{equation}
j=\max\big\{i\in\{0,\dots,n\}:q_i<p\big\}
\end{equation}
be the corresponding index. It then holds that
\begin{equation}\label{eq:coverset=coverall-p-and-j<...<n}
\coverall{Q_j}{\ee{p}} = \coverset{Q_j}{\ee{p}}
\leq\coverset{Q_{j+1}}{\ee{p}}\leq\dots \le \coverset{Q_{n-1}}{\ee{p}} \le \coverset{Q}{\ee{p}},
\end{equation}
where the equality is implied by choice of~$j$ and Claim~\ref{claim:coverset-coincides-coverall}, and the inequalities hold by Claim~\ref{claim:k<k<k+1}. 

Fix some subset~$H$ of~$Q$ that realizes the value~$\coverall{Q}{\ee{p}}$ in the sense that 
\begin{equation}\label{eq:choice-of-H}
\coverall{Q}{\ee{p}}=\coverset{H}{\ee{p}}.
\end{equation}
Next, we show that, for the set~$H$, we have
\begin{equation}\label{eq:inequality-h-cap-q-j-plus-1}
\coverset{H}{\ee{p}} \le \coverset{H\cap Q_j}{\ee{p}} +1.
\end{equation}
In case~$\coverset{H}{\ee{p}} \le \coverset{H\cap Q_j}{\ee{p}}$, we are done. Otherwise, let~$x$ be the least index in the range~$j+1, \dots, n$ such that $\coverset{H\cap Q_{x}}{\ee{p}}$ differs from $\coverset{H\cap Q_{x-1}}{\ee{p}}$. 
Then~\eqref{eq:inequality-h-cap-q-j-plus-1} follows from
\[
\coverset{H\cap Q_j}{\ee{p}} +1
= \coverset{H\cap Q_{x-1}}{\ee{p}} + 1 
= \coverset{H\cap Q_{x}}{\ee{p}}
= \coverset{H\cap Q}{\ee{p}} = \coverset{H}{\ee{p}},
\]
where the equalities hold, from left to right, by choice of~$x$, by Claim~\ref{claim:k<k<k+1}, by Claim~\ref{after-p-every-new-covering-is-last}, and since~$H$ is a subset of~$Q$.

Now, we have
\[
\coverall{Q}{\ee{p}}
= \coverset{H}{\ee{p}}
\le \coverset{H\cap Q_j}{\ee{p}} +1
\le \coverall{Q_j}{\ee{p}} +1
\le \coverset{Q}{\ee{p}} +1,
\]
where the relations hold, from left to right, by choice of~$H$, by~\eqref{eq:inequality-h-cap-q-j-plus-1}, because ${H\cap Q_j}$ is a subset of~$Q_j$, and by \eqref{eq:coverset=coverall-p-and-j<...<n}.

This concludes the proof of~\eqref{eq:inequality-of-claim-general-bound-for-coverall} and thus also of Claim~\ref{general-bound-for-coverall} and, finally, of~\eqref{eq:blp-generalized}. 

\medskip

\subsection{The left limit is unique}\label{proof:unique}
At that point, we have demonstrated that, for every nondecreasing translation function $g$ from a Martin-Löf random real~$\beta$ to a real $\alpha$, the left limit $\lim\limits_{q\nearrow\beta}\frac{\alpha - g(q)}{\beta - q}$ exists and is finite. It remains to show that this left limit does not depend on the choice of the translation function from~$\beta$ to~$\alpha$. For a proof by contradiction, assume that there exist two translation functions $f$ and $g$ from~$\beta$ to~$\alpha$ such that the values $\lim\limits_{q\nearrow\beta}\frac{\alpha - f(q)}{\beta - q}$ and $\lim\limits_{q\nearrow\beta}\frac{\alpha - g(q)}{\beta - q}$ differ. By symmetry, without loss of generality, we can then pick rationals~$c$ and~$d$ such that
\begin{equation}\label{eq:uniqueness-c-and-d}
\lim\limits_{q\nearrow \beta}\frac{\alpha - g(q)}{\beta - q} < c < d <
\lim\limits_{q\nearrow \beta}\frac{\alpha - f(q)}{\beta - q}.
\end{equation}
By~\eqref{eq:uniqueness-c-and-d}, for every rational~$q < \beta$ that is close enough to~$\beta$, it holds that
\[
\frac{\alpha - g(q)}{\beta - q} < c
\makebox[5em]{and} 
d < \frac{\alpha - f(q)}{\beta - q}.    
\]

Fix some rational~$p < \beta$ such that the two latter inequalities are both true for all rationals~$q$ in the interval~$[p, \beta)$. We then have for all such~$q$ 
\begin{equation}\label{eq:towards-upper-bound-for-e-beta}
0 < d-c <\frac{\big(\alpha - f(q)\big) - \big(\alpha - g(q)\big)}{\beta - q} = \frac{g(q)-f(q)}{\beta - q},
\end{equation}
and consequently, letting~$e = d-c$,
\begin{equation}\label{eq:two-policemen}
    \ee{q} < \ee{\beta} < \ee{q} + g(q)-f(q)
\quad \text{ for all rationals~$q$ in~$[p, \beta)$}, 
\end{equation}
where the lower bound is immediate by~$q < \beta$, and the upper bound follows 
by multiplying the first and the last terms in~\eqref{eq:towards-upper-bound-for-e-beta} by~$\beta-q$ and rearranging. Let
\[
D = \{ q \in [0,1]\colon \text{$f$ and~$g$ are both defined on~$q$, and~$f(q) < g(q)$} \}.
\]
For every~$q$ in~$D$, define the intervals
\[
\intiunique{q}= [f(q), g(q)]
\makebox[5em]{ and }
\intuunique{q}= [\ee{q}, \ee{q} + g(q) - f(q)]. 
\]

Fix some effective enumeration~$q_0, q_1, \dots$ of~$D$. We define inductively a subset~$S$ of the natural numbers and let~$S_n$ be the intersection of~$S$ with~$\{0,\dots, n\}$. Let~$0$ be in~$S$, and for~$n>0$, assuming that~$S_n$ has already been defined, let
\[
n+1 \in S \makebox[7em]{ if and only if, }
\text{ for all~$i$ in~$S_n$, the intervals~$\intiunique{q_{i}}$ and~$\intiunique{q_{n+1}}$ are disjoint}. 
\]
The intervals~$\intuunique{q_n}$, where~$n$ is in~$S$, form a Solovay test. First, these intervals can be effectively enumerated since~$S$ is computable by construction. Second, the sum of the lengths of these intervals is at most~$1$ because, for every~$q$ in~$D$, the intervals~$\intuunique{q}$ and~$\intiunique{q}$ have the same length by definition, while the intervals~$\intiunique{q_{n}}$, where~$n$ is in~$S$, are mutually disjoint by definition of~$S$.

\medskip

So, in order to obtain the desired contradiction, it suffices to show that the Martin-Löf random real~$\ee{\beta}$ is covered by the Solovay test just defined, i.e., that there are infinitely many~$i$ in~$S$ such that the interval~$\intuunique{q_{i}}$ contains~$\ee{\beta}$. By definition of these intervals and~\eqref{eq:two-policemen}, here it suffices in turn to show that there are infinitely many~$i$ such that~$i$ is in~$S$ and~$q_i$ is in~$[p,\beta)$. To this end, we fix some arbitrary natural number~$n$ and show that there is such~$i>n$. 

Since the values~$f(q)$ converge from below to~$\alpha$ when~$q$ tends from below to~$\beta$, we can fix an index~$i_0 > n$ such that~$q_{i_0} \in  [p, \beta)$, and in addition, we have
\begin{equation}\label{eq:uniqueness-varepsilon-lt-q}
\text{(i) } g(p) < f(q_{i_0})
\makebox[3.5em]{ and }
\text{(ii) }g(q_i) < f(q_{i_0}) \;
\text{ for all~$i$ in~$S_n$ where~$q_i < \beta$.}
\end{equation}
In case~$i_0$ is selected by~$S$, we are done. Otherwise, there must be some~$i_1 < i_0$ in~$S$ such that~$\intiunique{q_{i_1}}$ has a nonempty intersection with~$\intiunique{q_{i_0}}$. We fix such an index~$i_1$ and conclude the proof by showing that we must have~$n<i_1$ and~$q_{i_1} \in [p, \beta)$.

In order to prove the latter, we show for~$q$ in~$D$ that, in case~$q < p$ and in case~$q \geq \beta$, the intervals~$\intiunique{q}$ and~$\intiunique{q_{i_0}}$ are disjoint. In the former case, by monotonicity of~$g$, the right endpoint~$g(q)$ of the interval~$\intiunique{q}$ is strictly smaller than the left endpoint~$f(q_{i_0}) > g(p)$ of~$\intiunique{q_{i_0}}$. In the latter case, the left endpoint~$f(q)$ of~$\intiunique{q}$ is at least as large as~$\alpha$, and thus is strictly larger than the right endpoint of~$\intiunique{q_{i_0}}$ since $f$ maps $[0,\beta)$ in $[0,\alpha)$ as a translation function. Otherwise, i.e., in case~$f(q) < \alpha$, since the values~$f(q')$ converge from below to~$\alpha$ when~$q'$ tends from below to~$\beta$, there would be~$q'< \beta$ where~$f(q) < f(q')$, contradicting the monotonicity of~$f$.

It remains to show that~$n < i_1$, i.e., that~$i_1$ is not in~$S_n$. But, for any index~$i$ in~$S_n$, the intervals~$\intiunique{q_{i}}$ and~$\intiunique{q_{i_0}}$ are disjoint as follows in case~$q_i \geq \beta$ from the discussion in the preceding paragraph and follows in case~$q_i < \beta$ from~(ii) in~\eqref{eq:uniqueness-varepsilon-lt-q}.

This concludes the proof of uniqueness of the left limit as well as the whole proof of Theorem~\ref{theorem:BLP-generalized}.
\end{proof}

\section{Conclusion and further extensions}

Theorem~\ref{theorem:BLP-generalized} can be interpreted as indication that the existence of the left limit in~\eqref{eq:BLP-generalized} is not an exceptional feature of left-c.e.\ Martin-Löf random reals but is rather an inherent property of Solovay reducibility to arbitrary Martin-Löf random reals via nondecreasing translation functions.

This allows to suppose that the intuitive idea of Solovay reducibility from $\alpha$ to $\beta$, namely the existence of a \say{faster} approximation of $\alpha$ than a given approximation of $\beta$, can be captured in terms of monotone translation functions.
A characterization of that kind of the {S2a}-reducibility has been found in 2023 by Kumabe, Miyabe, and Suzuki~\cite[Theorem 3.7]{Kumabe-etal-2023}.

\begin{theorem}[Kumabe et al., 2023]\label{Kumabe-characteristic}
    Let $\alpha$ and $\beta$ be two c.a.\ reals. Then $\alpha\redsolovayzweia\beta$ if and only if there exist a lower semi-computable Lipschitz function ${f:\mathbb{R}\to\mathbb{R}}$ and an upper semi-computable Lipschitz function ${h: \mathbb{R}\to\mathbb{R}}$ such that $f(x)\leq h(x)$ for all $x\in\mathbb{R}$  and ${f(\beta) = h(\beta) = \alpha}$.
\end{theorem}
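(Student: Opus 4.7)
I plan to prove both directions of the biconditional separately.

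\textbf{Backward direction.} Assume $f, h$ are $L$-Lipschitz, $f \leq h$, and $f(\beta) = h(\beta) = \alpha$. Fix a computable approximation $(b_n) \to \beta$. For each rational $b_n$, semi-computability of $f, h$ provides uniformly effective monotone rational approximations $\underline{f}_{n,s} \nearrow f(b_n)$ and $\overline{h}_{n,s} \searrow h(b_n)$. The Lipschitz condition combined with $f(\beta) = h(\beta) = \alpha$ forces $f(b_n), h(b_n) \in [\alpha - L|\beta - b_n|, \alpha + L|\beta - b_n|]$ and $h(b_n) - f(b_n) \leq 2L|\beta - b_n|$. Dovetailing over $(s, m)$ with $m > n$, I effectively locate $s_n, m_n$ such that $\overline{h}_{n,s_n} - \underline{f}_{n,s_n} \leq 3L|b_{m_n} - b_n|$; this search terminates because the left-hand side tends to at most $2L|\beta - b_n|$ as $s \to \infty$ while the right-hand side tends to $3L|\beta - b_n|$ as $m \to \infty$. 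Output $\alpha_n$ as the midpoint of $[\underline{f}_{n,s_n}, \overline{h}_{n,s_n}]$; the triangle inequality applied to $\alpha - \alpha_n$ yields a bound of the form $C_L(|\beta - b_n| + |\beta - b_{m_n}|)$, which by arranging $m_n$ to grow sufficiently fast collapses to the required S2a estimate with respect to a computable subsequence of $(b_n)$.

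\textbf{Forward direction.} Given an S2a-reduction $|\alpha - a_n| \leq c|\beta - b_n|$ via computable approximations, the natural first attempt
\[
f_0(x) = \sup_n \bigl(a_n - c|x - b_n|\bigr), \qquad h_0(x) = \inf_n \bigl(a_n + c|x - b_n|\bigr)
\]
produces $c$-Lipschitz, respectively lower and upper semi-computable functions with $f_0(\beta) = h_0(\beta) = \alpha$ by S2a, but the global inequality $f_0 \leq h_0$ can fail: two summands from distinct stages produce cones that may cross far from $\beta$. I plan to repair this by introducing a computable nonnegative slack $\sigma_n$ and setting
\[
f(x) = \sup_n \bigl(a_n - c|x - b_n| - 2c\sigma_n\bigr), \qquad h(x) = \inf_n \bigl(a_n + c|x - b_n| + 2c\sigma_n\bigr).
\]
Assuming $\sigma_n \geq |\beta - b_n|$, the chain $|a_n - a_m| \leq c|\beta - b_n| + c|\beta - b_m| \leq c\sigma_n + c\sigma_m$ combined with $|x - b_n| + |x - b_m| \geq |b_n - b_m|$ delivers $f \leq h$ globally, while $\sigma_n \to 0$ preserves $f(\beta) = h(\beta) = \alpha$.

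\textbf{Main obstacle.} The principal difficulty is the production of $\sigma_n$: a computable sequence satisfying both $\sigma_n \geq |\beta - b_n|$ and $\sigma_n \to 0$ would amount to a computable modulus of convergence for $\beta$, which does not exist for generic c.a.\ reals. I plan to circumvent this by extracting $\sigma_n$ adaptively from the observed tail oscillations $|b_k - b_n|$ for $k > n$ via a dovetailed search, reordering the enumeration of the approximation pairs so that each is eventually assigned a valid slack. Verifying that this reordered construction still drives $f(\beta)$ up to $\alpha$ and $h(\beta)$ down to $\alpha$ relies on the S2a coherence of the tail of the approximation and is the most delicate technical step.
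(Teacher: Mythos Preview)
First, a framing note: the paper does not prove this theorem at all. It is stated in the concluding section as a citation of Kumabe, Miyabe, and Suzuki~\cite{Kumabe-etal-2023}, with no argument given, so there is no in-paper proof to compare against. What follows is an assessment of your sketch on its own terms.

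\medskip

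\textbf{Backward direction.} Your dovetailed search terminates, and the bound
\[
|\alpha-\alpha_n|\le C_L\bigl(|\beta-b_n|+|\beta-b_{m_n}|\bigr)
\]
is correct. The problem is the last sentence. Passing to a ``computable subsequence of $(b_n)$'' does not eliminate the term $|\beta-b_{m_n}|$: the index $m_n$ is produced by the search, and nothing prevents $|\beta-b_{m_n}|$ from being much larger than $|\beta-b_n|$ (the search halts as soon as the inequality is witnessed, possibly by a very early $m$). You would need, for the pair you actually output, a single computable $b'_n$ with $|\beta-b_n|+|\beta-b_{m_n}|\le c'|\beta-b'_n|$; neither $b_n$ nor $b_{m_n}$ is guaranteed to work, and you cannot effectively decide which of the two is farther from $\beta$. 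This step needs a different idea.

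\medskip

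\textbf{Forward direction.} You have correctly located the obstruction, but the proposed repair cannot work as stated. You require a computable sequence $(\sigma_n)$ with $\sigma_n\ge|\beta-b_n|$ for all enumerated $n$ and with $\sigma_n\to 0$ along some subsequence (this is what forces $f(\beta)=h(\beta)=\alpha$). But any computable $(\sigma_n)$ satisfying $\sigma_n\ge|\beta-b_n|$ and $\liminf_n\sigma_n=0$ already makes $\beta$ computable: to approximate $\beta$ within $\varepsilon$, search for $n$ with $\sigma_n<\varepsilon$ and output $b_n$. So for a noncomputable c.a.\ $\beta$ the slack sequence you describe simply does not exist, regardless of how the pairs are reordered. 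The ``adaptive extraction from tail oscillations'' you gesture at does not escape this: whatever enumeration you produce, the subfamily of triples $(a_n,b_n,\sigma_n)$ that actually pushes $f(\beta)$ up to $\alpha$ must have $\sigma_n+|\beta-b_n|\to 0$, and reading off those $\sigma_n$ gives the forbidden modulus. A genuine proof of this direction has to avoid any uniform per-index slack dominating $|\beta-b_n|$; the Kumabe--Miyabe--Suzuki argument proceeds through signed-digit representations rather than a cone-with-slack construction, and that detour is not incidental.
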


We conjecture that the Limit Theorem of Barmpalias and Lewis-Pye can be generalized on the set of c.a.\ reals for the {S2a}-reducibility.

\begin{conjecture}
    Let $\alpha$ be a c.a.\ real and $\beta$ be a Martin-Löf random c.a.\ real that fulfills $\alpha\redsolovayzweia\beta$ via functions $f$ and $h$ as in Theorem~\ref{Kumabe-characteristic}. Then there exists a constant $d$ such that
    \begin{equation}\label{eq:BLP-RZ}
        f'(\beta) = h'(\beta) = d,
    \end{equation}
    where $d$ does not depend on the choice of $f$ and $h$ witnessing the reducibility $\alpha\redsolovayzweia\beta$.
    Moreover, $d=0$ if and only if $\alpha$ is not Martin-Löf random.
\end{conjecture}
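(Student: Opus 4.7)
My plan is to adapt the three classical arguments from the left-c.e.\ case---boundedness via Ku\v{c}era--Slaman, existence of the limit via Barmpalias and Lewis-Pye, and an additional uniqueness step---to the setting of an arbitrary real $\alpha$, using values of the nondecreasing translation function $g$ in place of left-c.e.\ approximations of $\alpha$ and $\beta$. Fix once and for all a computable enumeration $p_0, p_1, \dots$ of $\mathrm{dom}(g) \supseteq LC(\beta)$, and for each initial segment write its sorted version as $q_0 < \cdots < q_n$. For boundedness, i.e., for showing that $(\alpha - g(q))/(\beta - q)$ is bounded above for $q<\beta$, I would build for each $i$ a Martin-L\"of test whose $n$-th component consists of intervals of the form $[q_m, q_k + (g(q_m)-g(q_k))/2^{i+1}]$, taken over those pairs $k<m$ from the first $n+1$ sorted points for which the slope exceeds $2^{i+1}$. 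If the Solovay condition with constant $2^{-i}$ failed at some $q=p_K<\beta$, the translation-function property $\lim_{p\nearrow\beta}g(p)=\alpha$ would force $g$ to rise steeply just below $\beta$, placing $\beta$ into one of these intervals. The measure bound $2^{-(i+1)}$ would come from identifying a staircase of dominating indices $i_0 < i_1 < \cdots$ along which $g$ has average slope at most $2^{i+1}$, so that intervals off the staircase are absorbed by those on it.

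For the existence of the limit I would argue by contradiction: suppose $\liminf < c < d < \limsup$ with rationals $c,d$ satisfying $d-c<1$, set $e=d-c$, $\gamma(q)=g(q)-cq$, and $\delta(q)=g(q)-dq$. For each finite $Q\subseteq\mathrm{dom}(g)$ I would construct a finite test $M(Q)$ whose intervals have the form $[eq_s, \gamma(q_t)-\delta(q_s)]$, where the pairs $(s,t)$ are selected step by step via an ``index stair'' built from $\argmax\gamma$ and $\argmin\delta$ over appropriate index ranges. A telescoping computation would bound the total measure of $M(Q)$ by $\gamma(\max Q)-\gamma(\min Q) \leq 1$. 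Since $\beta$ is a left accumulation point of both $S=\{q:(\alpha-g(q))/(\beta-q)>d\}$ and $T=\{q:(\alpha-g(q))/(\beta-q)<c\}$, alternating $Q$-elements between $T$ and $S$ would force each oscillation to contribute a fresh interval of $M(Q)$ containing $e\beta$. Combined with a uniform bound of the form \coverall{Q}{e\beta}$\,\leq\,$\coverset{Q}{e\beta}$\,+\,1$, the oscillations would place $e\beta$ into an effective Martin-L\"of test, contradicting Martin-L\"of randomness of $e\beta$ (which is preserved since $e$ is a positive rational less than $1$).

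For uniqueness, if two nondecreasing translation functions $f,g$ from $\beta$ to $\alpha$ had distinct limits, I could pick rationals $c<d$ strictly between them; setting $e=d-c$, I would get $g(q)-f(q)>e(\beta-q)$ for every $q$ sufficiently close to $\beta$ from below. Then the intervals $U_q:=[eq,\,eq+g(q)-f(q)]$ all contain $e\beta$, while the same-length intervals $I_q:=[f(q),g(q)]$ lie in $[0,\alpha)$. A greedy selection along a computable enumeration of $\mathrm{dom}(f)\cap\mathrm{dom}(g)$ would produce a subsequence with pairwise disjoint $I_q$'s, so that the corresponding $U_q$'s form an effective Solovay test covering $e\beta$ infinitely often, again contradicting randomness of $e\beta$.

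I expect the main obstacle to lie in the existence step, specifically in establishing the uniform bound \coverall{Q}{e\beta}$\,\leq\,$\coverset{Q}{e\beta}$\,+\,1$ needed to convert the ensemble of tests $M(Q)$ into an honest Martin-L\"of test capturing $e\beta$. This bound requires an induction on $|Q|$ together with a careful case analysis of how the index stairs at later steps are reshuffled when one removes the largest point from $Q$; this combinatorial phenomenon has no direct left-c.e.\ counterpart, since in that setting one simply worked with a single monotone approximation, and understanding it for an arbitrary real $\beta$ and its associated translation function $g$ is the main new technical ingredient.
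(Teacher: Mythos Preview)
The statement you were asked to address is a \emph{conjecture}, and the paper provides no proof of it; it is explicitly left open. So there is nothing in the paper to compare your proposal against.

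More importantly, your proposal does not address the conjecture at all. The conjecture concerns the S2a-reducibility of Zheng and Rettinger, witnessed (via the Kumabe--Miyabe--Suzuki characterization) by a lower semi-computable Lipschitz function $f:\mathbb{R}\to\mathbb{R}$ and an upper semi-computable Lipschitz function $h:\mathbb{R}\to\mathbb{R}$ with $f\le h$ and $f(\beta)=h(\beta)=\alpha$; the assertion is that the \emph{derivatives} $f'(\beta)$ and $h'(\beta)$ exist, coincide, and are independent of the witnessing pair. What you have outlined instead is a proof of Theorem~\ref{theorem:BLP-generalized}: you work with a single nondecreasing partial computable translation function $g$ on the rationals and study the one-sided ratio $(\alpha-g(q))/(\beta-q)$ as $q\nearrow\beta$. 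These are different objects in a different reducibility framework. Nothing in your plan explains how to pass from the semi-computable Lipschitz pair $(f,h)$ to a rational translation function, how to handle two-sided differentiability at $\beta$, how semi-computability (rather than computability) interacts with the test constructions, or how the ``$d=0$ iff $\alpha$ is not Martin-L\"of random'' clause would follow. In short, you have sketched the paper's proof of its main theorem, not an attack on the open conjecture.
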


Finally, by Merkle and Titov~\cite[Corollary~2.10]{Merkle-Titov-2022}, the set of Schnorr random reals is closed upwards relative to the Solovay reducibility via total translation functions. We still don't know whether the Limit Theorem of Barmpalias and Lewis-Pye can be adapted for Schnorr randomness.

\section*{Acknowledgements}
The main result of this article, Theorem~\ref{theorem:BLP-generalized}, is a somewhat strengthened version of the main result of my doctoral dissertation.
I would like to thank my advisor Wolfgang Merkle for supervising the dissertation and for discussion that helped me to improve its presentation.

\end{document}